\newcommand{\mbb}[1]{\mathbb #1}
\newcommand{\mf}[1]{\mathfrak #1}
\newcommand{\mc}[1]{\mathcal #1}
\newcommand{\ms}[1]{\mathscr #1}
\newcommand{\wh}{\widehat}
\newcommand{\Spec}{\operatorname{Spec}}
\newcommand{\im}{\operatorname{im}}
\newcommand{\red}{{\operatorname{red}}}
\newcommand{\Pic}{{\operatorname{Pic}}}
\newcommand{\Frac}{{\operatorname{frac}}}
\newcommand{\ann}{\operatorname{ann}}
\newcommand{\til}[1]{\widetilde{#1}}
\newcommand{\customdiagram}[4]{
\fbox{\xymatrix @C=.33cm @R=-0.3cm {
 & {#2} \ar@/^0.4pc/[dr] \\
{#1} \ar@/^0.4pc/[ur] \ar@/_0.4pc/[dr] & & {#4} \\
 & {#3} \ar@/_0.4pc/[ur]
}}
}
\newcommand{\vardiagram}[2]{
\fbox{\xymatrix @C=.33cm @R=-0.3cm {
 & {#2}_1 \ar@/^0.4pc/[dr] \\
{#2}_0 \ar@/^0.4pc/[ur] \ar@/_0.4pc/[dr] & & {#1} \\
 & {#2}_2 \ar@/_0.4pc/[ur]
}}
}
\newcommand{\mvardiagram}[2]{
\mbox{\xymatrix @C=.33cm @R=-0.3cm {
 & {#2}_1 \ar@/^0.4pc/[dr] \\
{#2}_0 \ar@/^0.4pc/[ur] \ar@/_0.4pc/[dr] & & {#1} \\
 & {#2}_2 \ar@/_0.4pc/[ur]
}}
}
\newcommand{\varPdiagram}[2]{
\fbox{\xymatrix @C=.33cm @R=-0.3cm {
 & {#2}_\UU \ar@/^0.4pc/[dr] \\
{#2}_\BB \ar@/^0.4pc/[ur] \ar@/_0.4pc/[dr] & & {#1} \\
 & {#2}_\PP \ar@/_0.4pc/[ur]
}}
}
\newcommand{\antidiagram}[1]{
\fbox{\xymatrix @C=.33cm @R=-0.3cm {
 & {#1}_1 \ar@/_0.4pc/[dl] \\
{#1}_0 & & {#1} \ar@/_0.4pc/[ul] \ar@/^0.4pc/[dl] \\
 & {#1}_2 \ar@/^0.4pc/[ul]
}}
}
\theoremstyle{plain}
\newtheorem{thm}{Theorem}[section]
\newtheorem{lem}[thm]{Lemma}
\newtheorem{lemma}[thm]{Lemma}
\newtheorem{cor}[thm]{Corollary}
\newtheorem{prop}[thm]{Proposition}
\newtheorem*{thm*}{Theorem}
\newtheorem*{rem*}{Remark}
\newtheorem*{lem*}{Lemma}
\newtheorem*{cor*}{Corollary}
\newtheorem*{prop*}{Proposition}
\theoremstyle{definition}
\newtheorem{ex}[thm]{Example}
\theoremstyle{remark}
\newtheorem{rem}[thm]{Remark}
\newcommand{\sheaf}[1]{\mathscr{#1}}
\newcommand{\PP}{\sheaf{P}}
\newcommand{\UU}{\sheaf{U}}
\newcommand{\BB}{\sheaf{B}}
\newcommand{\fX}{\mathfrak X}
\newcommand{\fU}{\mathfrak U}
\newcommand{\iso}{\ {\buildrel \sim \over \longrightarrow} \ }
\newcommand{\oper}[1]{\operatorname{#1}}
\newcommand{\cha}{\oper{char}}
\newcommand{\Spf}{\oper{Spf}}
\newcommand{\codim}{\oper{codim}}
\def\<{\left<}
\def\>{\right>}
\DeclareSymbolFont{cyrletters}{OT2}{wncyr}{m}{n}
\DeclareMathSymbol{\Sha}{\mathalpha}{cyrletters}{"58}
\numberwithin{equation}{section}
\title{Finiteness of formal pushforwards}
\author{David Harbater}
\author{Julia Hartmann}
\author{Daniel Krashen}
\date{May 20, 2026}
\thanks{
\textit{\!\!\!\! Mathematics Subject Classification} (2020): 14D15, 13F25, 13B35 (primary); 13F40 (secondary). \\
\hskip .2in \textit{Key words and phrases.} Formal schemes, pushforwards, coherent sheaves, patching problems.
}
\begin{document}

\maketitle

\begin{abstract}
Under mild hypotheses, given a scheme $U$ and an open subset $V$ whose complement has codimension at least two, the pushforward of a torsion-free coherent sheaf on $V$ is coherent on $U$, and in particular is finite.  We prove an analog of this finiteness assertion in the context of formal schemes over a complete discrete valuation ring, but show that coherence does not always hold.  We then relate this to the problem of gluing 
formal functions, where the patches do not cover the entire scheme.
\end{abstract}

\section{Introduction} \label{intro}

If $j:V \to U$ is an inclusion of an open subscheme of a scheme $U$, then the map $j_*$, which carries sheaves of modules on $V$ to sheaves of modules on $U$, preserves quasi-coherence but not necessarily coherence.  For example, if $U$ is the affine $x$-line over a field $k$, and $V$ is the complement of the origin, then $j_*(\mc O_V)$ is not coherent because its global sections are $k[x,x^{-1}]$, which is not finite over $\mc O(U) = k[x]$.

But for a normal connected quasi-projective 
variety $U$, if the sheaf if torsion-free and the complement of $V$ in $U$ has codimension at least two, then torsion freeness and coherence are preserved under pushforward (see Theorem~\ref{scheme pushforward}, where the hypotheses on $U$ are weaker).  In particular, the pushforward is finite, in the sense that it assigns to each affine open set a finite torsion-free module over the ring of functions.

In this paper, we prove the following finiteness result in the context of formal schemes over a complete discrete valuation ring $T$.

\begin{thm*} [see Theorem~\ref{genl fin ext}]
Let $\ms X$ be a normal connected quasi-projective $T$-scheme,
and let $f:V\hookrightarrow U$ be an inclusion of non-empty open subsets of the reduced closed fiber of $\ms X$ such that the complement of $V$ in $U$ has codimension at least two in $U$.  Write $\frak U, \frak V$ for the formal completions of $\ms X$ along $U,V$. If $\ms F$ is a torsion-free coherent sheaf on $\frak V$, then $\wh f_*(\ms F)$ is a finite torsion-free sheaf on $\mf U$. 
\end{thm*}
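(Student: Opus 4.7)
The plan is to reduce to the scheme-theoretic statement (Theorem~\ref{scheme pushforward}) via a devissage along powers of a uniformizer $\pi$ of $T$, using the equivalence between coherent sheaves on the Noetherian formal scheme $\mf V$ and compatible inverse systems of coherent sheaves on its infinitesimal thickenings $V_n := (V, \mc O_{\ms X}/\pi^{n+1})$. Let $U_n$ be defined similarly; the complement $U_n \setminus V_n$ still has codimension at least two in $U_n$ since codimension is topological, and the inclusion $V_0 = V \hookrightarrow U = U_0$ lies within the setup of Theorem~\ref{scheme pushforward}. Since coherence of $\wh f_* \ms F$ on $\mf U$ is local on $\mf U$, I would work affine-locally on the target.

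Since $\ms F$ is torsion-free on $\mf V$, it is $\pi$-torsion-free and hence flat over $T$. Writing $\ms F_n := \ms F/\pi^{n+1}\ms F$, flatness yields short exact sequences
\[ 0 \to \ms F_0 \longrightarrow \ms F_n \longrightarrow \ms F_{n-1} \to 0 \]
of coherent sheaves on $V_n$, with $\ms F \cong \varprojlim_n \ms F_n$ by $\pi$-adic completeness. Granting coherence of $(f_0)_*\ms F_0$ on $U$, induction on $n$ (applying $(f_n)_*$ to the above sequence) gives coherence of $(f_n)_*\ms F_n$ on $U_n$ for all $n$. The transition maps $(f_{n+1})_*\ms F_{n+1}\to(f_n)_*\ms F_n$ form a compatible inverse system whose limit is coherent on $\mf U$ and canonically identifies with $\wh f_*\ms F$. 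Torsion-freeness of $\wh f_*\ms F$ then follows immediately: any local torsion section on $\mf U$ would restrict to a torsion section of $\ms F$ on $\mf V$, contradicting the hypothesis.

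The main obstacle I anticipate is the base case, namely coherence of $(f_0)_*\ms F_0$ on $U$. Theorem~\ref{scheme pushforward} applies directly when $\ms F_0$ is torsion-free on $V$, but $\ms F/\pi\ms F$ may acquire torsion at the non-locally-free locus of $\ms F$. To address this I would pass to the reflexive hull $\ms F^{**}$ on $\mf V$: the inclusion $\ms F \hookrightarrow \ms F^{**}$ has cokernel supported in codimension at least two of $\mf V$, so the conclusion for $\ms F$ reduces to the analogous conclusions for $\ms F^{**}$ and for that cokernel. A reflexive coherent sheaf on a normal (formal) scheme is locally free at codimension-one points, which constrains the torsion subsheaf $\T \subset \ms G := \ms F^{**}/\pi\ms F^{**}$ to be supported in sufficiently high codimension of $V$. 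Theorem~\ref{scheme pushforward} then applies to the torsion-free quotient $\ms G/\T$, while $(f_0)_*\T$ and the pushforward of the codimension-two cokernel are controlled by an induction on the dimension of their scheme-theoretic supports, reducing to the same theorem in lower dimension.
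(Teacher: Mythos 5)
Your overall strategy --- d\'evissage along powers of the uniformizer so as to reduce to the scheme-theoretic Theorem~\ref{scheme pushforward} --- is exactly the approach the paper flags as unworkable in Remark~\ref{rem reductions not tor free}, and the obstacle you correctly identify in your last paragraph is not removed by passing to the reflexive hull. The gap is in the base case. Replacing $\ms F$ by $\ms F^{**}$ does fix the paper's own counterexample $(x,t)\subset k[x][[t]]$, and a depth argument even shows that $\ms F^{**}/\pi\ms F^{**}$ has no embedded associated point at a codimension-one point of $V$ (at such a point the local ring of the formal scheme is a two-dimensional normal, hence Cohen--Macaulay, local ring, over which a reflexive module is maximal Cohen--Macaulay). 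But the rings $\wh R_V$ are only normal, not regular, so at points of codimension $\ge 2$ in $V$ the reflexive hull need not be locally free, and $\ms F^{**}/\pi\ms F^{**}$ can still acquire embedded associated points there. For an associated point $x$ of codimension $c\ge 1$ in $V$, the hypothesis that $U\smallsetminus V$ has codimension $\ge 2$ in $U$ only yields $\codim_{\ov{\{x\}}}\bigl((U\smallsetminus V)\cap\ov{\{x\}}\bigr)\ge 2-c$, whereas the coherence criterion behind Theorem~\ref{scheme pushforward} requires this relative codimension to be $\ge 2$. So your proposed ``induction on the dimension of the supports'' cannot close: the codimension condition degrades, rather than persists, as the support shrinks --- this is precisely the failure mode of the $\mc O/(y)$ example discussed after Theorem~\ref{scheme pushforward}. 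The same objection applies to the cokernel of $\ms F\hookrightarrow\ms F^{**}$, which can be supported on a divisor of $V$ whose closure meets $U\smallsetminus V$ in a divisor of itself, so that its pushforward is genuinely non-coherent. (That particular piece could be sidestepped by noting that $\wh f_*(\ms F)$ is a quasi-coherent subsheaf of $\wh f_*(\ms F^{**})$ and quoting that quasi-coherent subsheaves of coherent sheaves on locally Noetherian formal schemes are coherent; but the torsion in $\ms F^{**}/\pi\ms F^{**}$ itself cannot be sidestepped this way.) A secondary issue: even granting the base case, the transition maps of the system $(f_n)_*\ms F_n$ are not obviously surjective, which is needed for the limit to define a coherent sheaf on $\mf U$.

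For contrast, the paper never pushes forward the reductions $\ms F_n$ at all. It first shrinks $V$ to a union $W=U_1\cup U_2$ of two dense affines (Lemma~\ref{exist 2 conn opens}), so that coherence of the pushforward becomes finite generation of an intersection $M_1\cap M_2$ of modules (Proposition~\ref{patch 2}, fed into Lemma~\ref{2 open coherent}); and the torsion appearing in the reductions $M_{e,i}=M_e/t^iM_e$ is handled not by modifying the module but by splitting off the offending submodule $Q_{e,i}$ as in Lemma~\ref{mnqlemma}: the quotient $N_{e,i}$ has only minimal associated primes, so the scheme-theoretic coherence criterion applies to it, while the symbolic-power/Artin--Rees estimate of Lemma~\ref{fglemma} shows that the modules $Q_{e,i}$ vanish uniformly in the inverse system and so do not affect the limit. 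That uniform-vanishing statement is the ingredient your proposal is missing; without something playing its role, the d\'evissage does not go through.
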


Unlike the situation for schemes, the pushforward of formal sheaves does not in general preserve quasi-coherence, and finite pushforwards need not be coherent, as we show in Section~\ref{fin vs coh}.  In that section we also give sufficient conditions for a formal pushforward to be coherent; see
Theorem~\ref{coherent conditions} and its corollaries.

A motivation for studying this question comes from patching problems for modules.
Such problems arise, for example, in the context of an affine open cover of an affine scheme or formal scheme, where one gives compatible finite modules over the rings of functions on these open subsets, and asks for a finite module over the ring of global functions that induces the data compatibly.
Patching problems have been useful in obtaining results in Galois theory and local-global principles; e.g., see \cite{Ha94}, \cite{HH:FP}, \cite{HHK}.  Those papers considered projective curves over complete discrete valuation rings and their function fields.  In that situation, the closed fiber (which is the underlying topological space of the associated formal scheme) can be covered by just two affine open subsets.  As a result, in patching formal modules on open subsets to obtain a global formal module, one can avoid the difficulty of having to satisfy cocycle conditions arising from triple overlaps.  On the other hand, in higher dimensional cases, a quasi-projective variety need not have an open covering by just two affine open subsets.  But on any quasi-projective variety, one can find two affine open subsets such that the complement of their union has codimension two (see Lemma~\ref{exist 2 conn opens}).  If the pushforward from this union is coherent, then the patching problem can be treated without 
having to consider the cocycle conditions.

\smallskip 

{\bf Structure of the manuscript:}  We provide background and context in Section~\ref{background}, followed by two commutative algebra results in Section~\ref{2 lemmas}, and general results on formal schemes and formal patches in Section~\ref{formal}.  Using that material, in Section~\ref{patches} we obtain a key result (Proposition~\ref{patch 2}) that asserts that the intersection of two finitely generated torsion-free formal modules is also finitely generated under a codimension two hypothesis on the complement of the union.  
In Section~\ref{pushforwards} we build on this to prove the finiteness result Theorem~\ref{genl fin ext}, mentioned above.
Pushforward is somewhat more nicely behaved in the case of modules that are reflexive, rather than just being torsion-free.  This is discussed in Section~\ref{reflexive case}; e.g., see Corollary~\ref{reflexive pp cor}.  
Section~\ref{fin vs coh} begins by showing that a formal sheaf is coherent if and only if it is finite and quasi-coherent (Proposition~\ref{coh qcoh fin}), and then showing that 
for formal schemes, unlike for schemes, pushforward need not preserve quasi-coherence (Proposition~\ref{quasicoh}).  The remainder of Section~\ref{fin vs coh} is devoted to conditions and counterexamples concerning coherence of pushforwards, referred to above.
Finally, Section~\ref{p problems} applies these results to patching problems.

\smallskip

{\bf Acknowledgements:} We thank Craig Huneke for helping us with the commutative algebra Lemma~\ref{fglemma}, and Johan de Jong for pointing us to a result in the Stacks Project that yields Theorem~\ref{scheme pushforward}.  We thank V.~Srinivas and H\'el\`ene Esnault for their comments about the distinction between finite and coherent pushforwards, and in particular we thank  V.~Srinivas for bringing Example~\ref{Srinivas ex} to our attention.

\section{Background and context} \label{background}

We begin by fixing some terminology.
Following \cite[Partie 2, Proposition~5.1.2]{EGA4} 
and \cite[Partie 1, Chapter 0, D\'efinition~14.2.1]{EGA4}, 
if $X$ is a scheme then
the {\it codimension} $\codim_X(Y)$ of a closed subscheme $Y \subseteq X$ is the infimum 
of the Krull dimensions of the local rings $\mc O_{X,y}$ over $y \in Y$; this is also the infimum of the codimensions of the irreducible components of $Y$.  Under this definition, the codimension of the empty set is infinite.  For closed subschemes $Z \subseteq Y \subseteq X$, we have $\codim_X(Z) \ge \codim_Y(Z) + \codim_X(Y)$. 

Given a commutative ring $R$ (not necessarily a domain), recall that an $R$-module $M$ is {\it torsion-free} if no regular element of $R$ annihilates any non-zero element of $M$; or equivalently, if $M \to M \otimes_R K$ is injective, where $K$ is the total ring of fractions of $R$.  E.g., see \cite[Section~1]{vasc}.  
As in \cite[Partie 4, 20.1.5]{EGA4}, a sheaf of modules $\ms F$ on a scheme $X$ is {\it torsion-free} if the natural homomorphism $\ms F \to \ms F \otimes_{\mc O_X} \mc M_X$ is injective; here $\mc M_X$ is the sheaf of meromorphic functions on $X$.  This is equivalent to the condition that $\ms F(U)$ is a torsion-free $\mc O_X(U)$-module for every affine open subset $U$ of $X$; thus torsion-freeness is local.  By \cite[Partie 4, Proposition~20.1.6]{EGA4}, being torsion-free is also equivalent to the condition that every associated point of $\ms F$ is an associated point of $\mc O_X$.  (Recall from \cite[Partie~2, D\'efinition~3.1.1]{EGA4} that a point $x$ of $X$ is an {\it associated point of} $\ms F$ if the maximal ideal $\frak m_x \subset \mc O_{X,x}$ is an associated prime of the $\mc O_{X,x}$-module $\ms F_x$; i.e., is the annihilator of an element of $\ms F_x$.)  The {\it associated points of} $X$ are by definition the associated points of $\mc O_X$.  Thus the associated points of an open subset $U \subseteq X$ are the associated points of $X$ that lie in $U$.

Our results about formal schemes will rely on assertions in earlier sections that have several mild but technical hypotheses concerning Noetherian rings, related to the notion of excellence.  For convenience, we list those conditions here, along with references for details.  First, 
a {\it G-ring} is a Noetherian ring $R$ such that the map $R_{\mf p} \to \wh{R_{\mf p}}$ is regular for every prime ideal $\mf p$ of $R$, where $\wh{R_{\mf p}}$ is the completion of the local ring $R_{\mf p}$.  A ring is {\it quasi-excellent} if it is a G-ring that also satisfies what is known as the J-2 property (see \cite[32.B]{Mats} for the definition).
By~\cite[Theorem~78]{Mats}), quasi-excellent rings satisfy the condition of being Nagata rings (a class of Noetherian rings defined at \cite[31.A]{Mats}).  
A ring is {\it excellent} if it is quasi-excellent and in addition has the property of being 
universally catenary (defined at \cite[14.B]{Mats}); see also~\cite[Definition~07QT]{stacks}.  
Noetherian complete local rings are excellent, and excellence is preserved under localizing and under passage to a finitely generated algebra (see \cite[Section~34]{Mats}).
A scheme is {\it excellent} if it can be covered by affine open subsets $U_i$ such that each of the rings $\mc O_X(U_i)$ is excellent (see \cite[Definition~8.2.35]{Liu}); these schemes are automatically locally Noetherian.  One similarly defines schemes that are Nagata, are universally catenary, etc.

If $f:V \to U$ is a quasi-compact and quasi-separated morphism of schemes (e.g., an inclusion of Noetherian schemes), and if $\ms F$ is quasi-coherent on $V$, then $f_*(\ms F)$ is quasi-coherent on $U$ (see \cite[Lemma~01LC]{stacks}). 
For coherent modules, there is the following result, which is known to the experts, and which is essentially a special case of \cite[Partie 2, Corollaire~5.11.4(ii)]{EGA4} 
and \cite[Lemma~0AWA]{stacks} 
(as Johan de Jong pointed out to us).  Note that this theorem holds in particular in the case mentioned in the introduction, viz., of a normal connected quasi-projective scheme $U$, since normal (and integral) schemes are reduced, and since quasi-projective varieties are excellent (by \cite[Section~34]{Mats}).

\begin{thm} \label{scheme pushforward}
Let $U$ be a scheme that is Nagata and universally catenary, and such that each associated point has codimension zero (e.g., $U$ is an excellent reduced scheme).
Let $j:V \hookrightarrow U$ be the inclusion of 
an open subset such that the complement of $V$ in $U$ has codimension at least two in $U$.  Then for any torsion-free coherent sheaf $\ms F$ on $V$, the pushforward $j_*(\ms F)$ is a torsion-free coherent sheaf on $U$.
\end{thm}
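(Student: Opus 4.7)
The plan is to reduce to the affine case, extend $\mc F$ to a torsion-free coherent sheaf on $U$, and then import the finiteness statement from the cited references. Since coherence and torsion-freeness are local on the base, I would assume $U = \Spec(A)$ is affine Noetherian and write $Z = U \setminus V$, so that $\codim_U Z \ge 2$. Torsion-freeness of $j_*\mc F$ is straightforward: a section over an affine open $W \subseteq U$ is a section of $\mc F$ over $W \cap V$, and since $V$ meets every generic point of $U$ (as $Z$ has positive codimension in the reduced $U$), a regular element of $\mc O_U(W)$ restricts to a regular element of $\mc O_V(W \cap V)$, so the torsion-freeness of $\mc F$ propagates to $j_*\mc F$.

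Next I would produce a torsion-free coherent extension on $U$. By Nagata's extension theorem (\cite[Lemma~01PF]{stacks}), $\mc F$ extends to some coherent $\mc O_U$-module, and quotienting by the torsion subsheaf yields a torsion-free coherent extension $\mc G$ of $\mc F$ to $U$ (the torsion is supported on $Z$ since $\mc F$ is already torsion-free on $V$). The local cohomology exact sequence
\[
0 \to H^0_Z(\mc G) \to \mc G \to j_*\mc F \to H^1_Z(\mc G) \to 0
\]
simplifies because $H^0_Z(\mc G) = 0$: by prime avoidance in the reduced Noetherian ring $A$, the ideal of $Z$ contains a regular element, which kills no nonzero section of the torsion-free $\mc G$. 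Thus $\mc G \hookrightarrow j_*\mc F$ with cokernel embedded in $H^1_Z(\mc G)$.

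Coherence of $j_*\mc F$ is therefore reduced to coherence of $H^1_Z(\mc G)$, which is precisely the substance of the hypotheses. This is the content of \cite[Partie 2, Corollaire~5.11.4(ii)]{EGA4} and \cite[Lemma~0AWA]{stacks}: for a reduced Nagata universally catenary scheme $U$, any coherent $\mc O_U$-module $\mc G$, and any closed subset $Z$ of codimension $\ge 2$, the module $H^1_Z(\mc G)$ is coherent (equivalently, the $S_2$-hull of $\mc G$ is coherent). The main obstacle is exactly this finiteness step: Nagata-ness prevents pathological behavior under normalization and completion, while universal catenarity ensures that the codimension bound $\ge 2$ propagates uniformly across the strata of $Z$. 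Granting it, the local cohomology exact sequence presents $j_*\mc F$ as an extension of a coherent module by the coherent module $\mc G$, and hence it is coherent over the Noetherian ring $A$.
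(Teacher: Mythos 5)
Your scaffolding differs from the paper's: the paper applies \cite[Lemma~0AWA]{stacks} directly to $\mc F$ on $V$ (that lemma is stated precisely for the pushforward of a coherent sheaf from the open complement of $Z$), whereas you first extend $\mc F$ to a torsion-free coherent $\mc G$ on $U$ and then reduce coherence of $j_*\mc F$ to finiteness of $H^1_Z(\mc G)$ via the local cohomology sequence. That detour is legitimate as far as it goes --- the four-term sequence, the vanishing of $H^0_Z(\mc G)$ by prime avoidance, and your torsion-freeness argument for $j_*\mc F$ are all sound, and the last of these is essentially the paper's own argument --- but it is also unnecessary, since the cited lemma already concerns $j_*$ of a coherent sheaf on $V$.

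The genuine gap is at the decisive finiteness step, where you state the cited result incorrectly and consequently never verify its hypotheses. Neither \cite[Lemma~0AWA]{stacks} nor \cite[Partie 2, Corollaire~5.11.4(ii)]{EGA4} asserts coherence of $H^1_Z(\mc G)$ (equivalently, of the pushforward) for an \emph{arbitrary} coherent $\mc G$ and an arbitrary closed $Z$ of codimension at least two; besides Nagata and universally catenary, they require a condition on associated points, namely that $\dim(\mc O_{\overline{\{x\}},z})\ge 2$ for every associated point $x$ of the sheaf and every $z \in Z \cap \overline{\{x\}}$. Your paraphrase is refuted by the example the paper gives immediately after Theorem~\ref{scheme pushforward}: for $U=\mbb A^2_k$, $Z$ the origin, and $\mc G = \mc O_U/(y)$, one has $\codim_U Z = 2$, yet $H^1_Z(\mc G)\cong k[x,x^{-1}]/k[x]$ is not finite and $j_*$ of the restriction is not coherent. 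The whole point of the torsion-free hypothesis in the coherence statement is to secure this associated-points condition, and that is exactly the portion of the argument you omit: as in the paper's proof, torsion-freeness gives $\operatorname{Ass}(\mc F)\subseteq \operatorname{Ass}(\mc O_V)$, reducedness identifies these with the codimension-zero points, so each closure $\overline{\{x\}}$ is an irreducible component of $U$, and only then does the codimension hypothesis on $Z$ give the required bound $\dim(\mc O_{\overline{\{x\}},z})\ge 2$. With that verification inserted (applied either to your torsion-free $\mc G$ or, more simply, to $\mc F$ on $V$, making the extension and local cohomology detour superfluous), your argument closes.
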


\begin{proof}
We first observe that any excellent reduced scheme $U$ satisfies the hypotheses of the theorem.  Namely, such a $U$ is locally Noetherian since it is excellent; and since $U$ is also reduced it then follows from \cite[Lemmas~0EMA, 05AR]{stacks} 
that each associated point of~$U$ has codimension zero.  Also, as noted above, excellence implies the conditions of being Nagata and universally catenary.  So an excellent reduced $U$ satisfies the above hypotheses.

The hypotheses on $U$ in the theorem are inherited by $V$, since $V$ is an open subset of~$U$.  And as  
noted above, for $\ms F$ a torsion-free coherent sheaf on $V$, the associated points of $\ms F$ are also associated points of $\mc O_V$, or equivalently of $V$.  Thus the associated points of $\ms F$ have codimension zero in $V$.

By hypothesis, the complement $Z$ of $V$ in $U$ has codimension at least two in $U$.  Also, by the previous paragraph, for 
every associated point $x$ of $\ms F$, the closure $\overline{\{x\}}$ of $\{x\}$ in $U$ is an irreducible component of $U$.  It then follows that for every associated point $x$ of $\ms F$, the codimension of $Z \cap \overline{\{x\}}$ in $\overline{\{x\}}$ is at least $2$; or equivalently, $\dim(\mc O_{\overline{\{x\}},z})\ge 2$ for every $z \in Z \cap \overline{\{x\}}$.

As a consequence, since $U$ is Nagata and universally catenary, we obtain that $j_*(\ms F)$ is coherent on $U$, by applying \cite[Lemma~0AWA]{stacks} 
(or alternatively \cite[Partie 2, Corollaire~5.11.4(ii)]{EGA4}; see also \cite[Proposition~0334]{stacks}). 

It remains to show that $j_*(\ms F)$ is torsion-free on $U$; i.e., $j_*(\ms F)(O)$ is a torsion-free $\mc O_U(O)$-module for every affine open subset $O$ of $U$.
After replacing $U$ by an arbitrary such $O$ and replacing $V$ by $O \cap V$, 
we may assume that $U$ is an affine scheme $\Spec(R)$,  
and prove that $j_*(\ms F)(U)$ is a torsion-free $R$-module.  
By \cite[Lemmas~05AR, 05C3]{stacks},   
the set of zero-divisors in $R$ is the union of the 
associated primes of $R$; and by hypothesis this is  
the set of elements of $R$ that vanish at the generic point of some irreducible component of $U$ (i.e., at one of the points of codimension zero).
For the same reason, the corresponding statement holds for the zero divisors in $R'$, where $\Spec(R')$ is any affine open subset of $U$.

Now let $m$ be a non-zero element of $M:= j_*(\ms F)(U) = \ms F(V)$ and let $r$ be a regular element of $R$.  We wish to show that $rm \ne 0$.  Since $m$ is non-zero in $\ms F(V)$, and since $\ms F$ is coherent, there is a non-empty affine open subset $V' = \Spec(R') \subseteq V \subseteq U$ such that the restriction $m'$ of $m$ from $V$ to $V'$ is non-zero in $\ms F(V')$.  
Since $r$ is regular in $R$ (i.e., not a zero-divisor), it does not vanish at the generic point of any irreducible component of $U$, by the previous paragraph.  Thus the image $r'$ of $r$ in $R'$ does not vanish at the generic point of any irreducible component of $V' = \Spec(R')$ (since the latter set of generic points is contained in the former set).

So $r'$ is a regular element of $R'$, by the previous paragraph applied to $R'$.  But $\ms F(V')$ is a torsion-free module over $R'= \mc O_V(V')$, since $\ms F$ is a torsion-free sheaf on $V$.  Hence $r'm' \ne 0$ in $\ms F(V')$.  Since $r'm'$ is the image of $rm$ under the restriction map $M = \ms F(V) \to \ms F(V')$, it follows that $rm \ne 0$ in $M$, as needed.
\end{proof}

\begin{ex} \label{torsion pushforward ex}
To illustrate the role of the torsion-free condition on coherent sheaves here (or more generally, the condition on associated points), let $V$ be the complement of the origin in the affine $x,y$-plane $U$ over a field $k$, with inclusion $j:V \hookrightarrow U$, and let $\ms F = j^*(\mc O/\mc I)$, where~$\mc I$ is the sheaf of ideals on $U$ induced by the ideal $(x) \subset k[x,y] = \mc O(U)$.  The pushforward $j_*\ms F$ is not coherent on $U$, since its global sections are $k[y,y^{-1}]$.  Here the complement $Z$ of $V$ in $U$ has codimension two, but $\ms F$ is not torsion-free, since it is $x$-torsion, with $(x)$ an associated point.  Moreover $Z$ is of codimension one (not two) in the closure of the associated point $(x)$. 
\end{ex}

\smallskip

In Section~\ref{pushforwards}, we study the analogous situation for torsion-free coherent sheaves  on formal schemes over a complete discrete valuation ring $T$. There, we consider formal schemes $\mf V \subseteq \mf U$ having underlying sets $V \subseteq U$, with $V$ open in $U$ such that the complement of $V$ in $U$ has codimension at least two.  In Theorem~\ref{genl fin ext}, we prove in this situation that the pushforward from $\mf V$ to $\mf U$ is torsion-free and finite.  Here the torsion-free hypothesis means that the sections over each affine open set $V$ of the underlying space $U$ form a torsion-free module over $\mc O_{\mf U}(V)$.  Without the torsion-free assumption, one can construct counterexamples to finiteness similar to the one above, by taking the $t$-adic completion of the base change of the above example from $k$ to $k[[t]]$.

The proof of finiteness for pushforwards of formal coherent sheaves is more involved than the proof over schemes.  Namely, suppose we are given
a torsion-free coherent sheaf $\ms F$ on a formal scheme $\mf U$ as above, with $U_n$ being the $n$-th thickening of the reduced closed fiber. 
It is tempting to try to apply the scheme-theoretic result \cite[Lemma~0AWA]{stacks} 
(or \cite[Partie 2, Corollaire~5.11.4(ii)]{EGA4}) to the pullback $\ms F_n$ of $\ms F$ to each $U_n$, and to use that a coherent sheaf on $\mf U$ corresponds to 
an inverse system of compatible coherent sheaves on the schemes~$U_n$ (see \cite[Th\'eor\`eme~10.11.3]{EGA1}).
The difficulty is that $\ms F_n$ need not be torsion-free, and may have new associated points of positive codimension in $U_n$; and this would prevent the use of the above results.  (See also Remark~\ref{rems on fin pushforward}(\ref{rem reductions not tor free}).)  Instead, in Section~\ref{patches}, we follow a strategy that relies on the commutative algebra lemmas proven in Section~\ref{2 lemmas}; and we build on that in proving Theorem~\ref{genl fin ext}. 

\section{Two general lemmas} \label{2 lemmas}

Before turning to formal schemes, we prove two general results.  
The proof of the first lemma was outlined for us by Craig Huneke in the case that $I$ is prime.

\begin{lem} \label{fglemma} 
Let $R$ be a $G$-ring that is a normal domain, let $I$ be a proper ideal in $R$,
and let $M$ be a finitely generated torsion-free $R$-module.  Let $P_1,\dots,P_s$ be the minimal primes over $I$.
\begin{enumerate} [(a)]
\item For every $i \ge 0$ there is an $n \ge 0$ such that
$M \cap P_1^nM_{P_1} \cap \cdots \cap P_s^nM_{P_s} \subseteq I^iM$.
\item In particular, for every integer $c \ge 0$ there is some $n \ge 0$
such that if $r\in R$ and $m\in M$ satisfy $rm \in I^nM$ then either $r \in P_j$ for some $j$ or 
$m \in I^cM$.
\end{enumerate}
\end{lem}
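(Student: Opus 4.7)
The plan is to prove (a) and note that (b) follows immediately. For (b), given $c \ge 0$, apply (a) to produce $n$. If $r \notin P_j$ for every $j$, then $r$ is a unit in each $R_{P_j}$, so from $rm \in I^n M \subseteq P_j^n M$ we get $m \in P_j^n M_{P_j}$ for every $j$, and (a) then forces $m \in I^c M$. So the entire content is in (a).

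For (a), I first reduce to the case where $M$ is free. Since $M$ is a finitely generated torsion-free module over the domain $R$, clearing denominators in a basis of $M \otimes_R \Frac(R)$ produces an injection $M \hookrightarrow F := R^d$ for some $d$. The Artin--Rees lemma, applied to the submodule $M \subseteq F$ with respect to the ideal $I$, supplies an integer $c \ge 0$ with $I^{n+c} F \cap M \subseteq I^n M$ for all $n \ge 0$. Since localization is exact, $M \cap P_j^n M_{P_j} \subseteq M \cap P_j^n F_{P_j}$ for every $j$. Consequently, if one proves (a) for $F$ with exponent $i+c$ in place of $i$, then intersecting the resulting containment with $M$ and invoking Artin--Rees yields (a) for $M$ with exponent $i$.

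For the free module $F = R^d$ the intersection decomposes coordinatewise as $F \cap P_j^n F_{P_j} = (P_j^{(n)})^d$, where $P_j^{(n)} := P_j^n R_{P_j} \cap R$ is the $n$-th symbolic power. Hence (a) for $F$ reduces to the purely ideal-theoretic statement: for every $e \ge 0$ there exists $n \ge 0$ with $P_1^{(n)} \cap \cdots \cap P_s^{(n)} \subseteq I^e$. In other words, the $\sqrt{I}$-symbolic topology on $R$ is at least as fine as the $I$-adic topology.

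The main obstacle is this symbolic-vs-adic comparison, and the hypothesis that $R$ is a normal $G$-ring enters precisely here. For each maximal ideal $\mathfrak{m}$ of $R$, the $G$-ring property makes $R_{\mathfrak{m}} \to \widehat{R_{\mathfrak{m}}}$ a regular map; since regular homomorphisms preserve normality, $\widehat{R_{\mathfrak{m}}}$ is normal, and in particular reduced, so $R$ is analytically unramified. Under this hypothesis the classical theory of symbolic topologies (going back to Zariski--Nagata and developed by Schenzel, Huneke, and others) produces the required containment $\bigcap_j P_j^{(n)} \subseteq I^e$ for $n$ sufficiently large. In the case $s = 1$ where $I = P$ is prime, this is exactly the comparison $P^{(n)} \subseteq P^e$ that Huneke outlined for the authors; the extension to arbitrary $I$ amounts to the analogous comparison applied to the radical $\sqrt{I}$, possibly combined with a primary-decomposition argument to absorb any embedded associated primes of $R/I^e$.
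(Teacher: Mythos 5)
Your treatment of (b), the embedding $M \hookrightarrow R^d$, the Artin--Rees reduction to the free case, and the coordinatewise reduction to the ideal-theoretic statement about symbolic powers are all correct, and they closely parallel the paper's own argument (the paper performs the same embed-and-Artin--Rees step, just inside the case of a single prime). Using the $G$-ring hypothesis to make the completions of the local rings normal and then invoking Schenzel is also exactly how the paper handles the case $s=1$, $I=P$ prime; note, however, that what must be verified in Schenzel's criterion is that the completed local rings are \emph{domains} (which normality supplies, since a normal local ring is a domain), not merely reduced, so ``analytically unramified'' is weaker than what that citation actually needs.

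The genuine gap is the last step, where you assert that $P_1^{(n)} \cap \cdots \cap P_s^{(n)} \subseteq I^e$ for large $n$ follows from ``the analogous comparison applied to the radical $\sqrt I$, possibly combined with a primary-decomposition argument.'' The theorem of Schenzel being used is a statement about a single prime ideal, and for $s \ge 2$ the obvious move of applying it to each $P_j$ separately yields only $\bigcap_j P_j^{(n)} \subseteq \bigcap_j P_j^{e}$, which need not be contained in $I^e = \bigl(\bigcap_j P_j\bigr)^{e}$; embedded primes of $R/I^e$ are not the obstruction, so your proposed fix does not address the actual difficulty. This multi-prime passage is the substantive content of part (a) beyond the prime case, and the paper devotes its main paragraph to it: after replacing $I$ by $\sqrt I$ (using $\sqrt I^{\,\alpha} \subseteq I$), it inducts on $s$, writes $I = J \cap P_s$ with $J = P_1 \cap \cdots \cap P_{s-1}$, and applies the $s=1$ statement not to $M$ but to the torsion-free module $J^iM$ with the prime $P_s$; since $J \not\subseteq P_s$ (prime avoidance among the distinct minimal primes), one has $J^iM_{P_s}=M_{P_s}$, and the output lands in $P_s^iJ^iM=(P_sJ)^iM\subseteq (P_s\cap J)^iM = I^iM$. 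An analogous induction can be run at the level of ideals in your free-module reduction (applying the prime case together with Artin--Rees for each ideal $J^i \subseteq R$), so the approach is repairable along the paper's lines; but as written, the key containment is simply asserted rather than proved, and no precise theorem comparing the symbolic and adic topologies of a non-prime radical ideal is stated or has its hypotheses checked.
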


\begin{proof}
The radical $\sqrt I$ of $I$ is the ideal $P_1 \cap \cdots \cap P_s$, and 
by \cite[Proposition~7.14]{AM} there is an integer $\alpha$ such that $\sqrt I^{\,\alpha} \subseteq I$.  Thus for part~(a), it suffices to prove the assertion with~$I$ replaced by~$\sqrt I$.  So we will assume that $I$ is the intersection of the prime ideals $P_i$,  
and will proceed by induction on $s$.

If $s=1$, then $I$ is a prime ideal $P$.  
First consider
the special case that $M=R$.   
In this situation, for each positive integer $n$, $M \cap P^nM_P$ is just the $n$-th symbolic power $P^{(n)} := R \cap P^nR_P$ of $P$.
Since $R$ is a normal G-ring, the completion $\wh R_Q$ of $R$ at each prime ideal $Q \subset R$ is also normal, by \cite[33.I]{Mats}.  
Since $\wh R_Q$ is normal and local, it is a domain, and its only associated prime is $(0)$.  Since this holds for all $Q$, \cite[Theorem~1]{Schen} asserts that the $P$-adic topology on $R$ defined by the ideals $P^n$ is equivalent to the $P$-symbolic topology defined by the ideals $P^{(n)}$.  
(Namely, the condition in part (ii) of that theorem holds because the annihilator ideals $Q$ considered there properly contain $P$, and the only associated prime of the complete local ring at $Q$ is $(0)$.)
Hence part~(a) follows in this special case.

Next, still with $s=1$ and $I=P$, consider a more general finitely generated torsion-free $R$-module $M$.  
By \cite[Lemma~0AUU]{stacks}, 
$M$ is contained in a finitely generated free $R$-module~$E$.  By the Artin-Rees lemma (e.g., \cite[Lemma~00IN]{stacks}), 
there is a positive integer $d$ such that for every $e \ge d$, $M \cap P^eE = P^{e-d}(M \cap P^dE) \subseteq P^{e-d}M$. Take $i\ge 0$.
By the previous paragraph, there exists $n\ge 0$ such that $R \cap P^nR_P \subseteq P^{i+d}$.  Thus the free module $E$ satisfies
$E \cap P^nE_P \subseteq P^{i+d}E$.
Here $M \subseteq E$ and so $M_P \subseteq E_P$.  Hence
\[M \cap P^nM_P = M \cap E \cap P^nM_P\subseteq M \cap E \cap P^nE_P \subseteq M \cap P^{i+d}E \subseteq P^iM,\]  at the last step using Artin-Rees with $e=i+d$.  This proves the case $s=1$.

For the inductive step, take $I = P_1 \cap \cdots \cap P_s$, and assume that the assertion holds for
$J := P_1 \cap \cdots \cap P_{s-1}$.  Here $I = P_s \cap J$.
We will prove that for every $i$ there is an $n$ such that 
$M \cap P_1^nM_{P_1} \cap \cdots \cap P_s^nM_{P_s} \subseteq I^iM$. 
So take some $i \ge 0$.
By the inductive hypothesis, there is an $n'\ge 0$ such that 
$M \cap P_1^{n'}M_{P_1} \cap \cdots \cap P_{s-1}^{n'}M_{P_{s-1}} \subseteq J^iM$. 
By the above case of $s=1$ applied to the finitely generated torsion free module $J^iM$ and the ideal $P_s$, 
there is some $m \ge 0$ such that $J^iM \cap P_s^mJ^iM_{P_s} \subseteq P_s^iJ^iM$.
Since $P_1,\dots,P_s$ are the (distinct) minimal primes over $I$, no $P_j$ is contained in $P_s$ for $j<s$.  Thus $J = P_1 \cap \cdots \cap P_{s-1}$ is also not contained in $P_s$, by \cite[Proposition~1.11(ii)]{AM}.  Hence
$JR_{P_s}$ is the unit ideal of $R_{P_s}$, and
$J^iM_{P_s} = M_{P_s}$; 
so $J^iM \cap P_s^mM_{P_s} 
= J^iM \cap P_s^mJ^iM_{P_s} 
\subseteq P_s^iJ^iM 
= (P_sJ)^iM 
\subseteq (P_s \cap J)^iM 
= I^iM$.
Thus \[M \cap P_1^{n'}M_{P_1} \cap \cdots \cap P_{s-1}^{n'}M_{P_{s-1}} \cap P_s^mM_{P_s} 
\subseteq J^iM \cap P_s^mM_{P_s} \subseteq I^iM.\]
Let $n = \max(n',m)$.  Then 
\[M \cap P_1^nM_{P_1} \cap \cdots \cap P_s^nM_{P_s} \subseteq M \cap P_1^{n'}M_{P_1} \cap \cdots \cap P_{s-1}^{n'}M_{P_{s-1}} \cap P_s^mM_{P_s} 
\subseteq I^iM,\] and this concludes the inductive proof of part~(a).

For part~(b), let $n \ge 0$ be associated to the value $i=c$ as in part~(a).   
Suppose that $r\in R$ and $m\in M$ satisfy $rm \in I^nM$.  
Thus $rm \in P_j^nM_{P_j}$ for all $j$.
If $r$ does not lie in any $P_j$, then 
$r$ is a unit in each $R_{P_j}$ and so $m \in P_j^nM_{P_j}$ for all $j$.
Hence 
$m \in M \cap P_1^nM_{P_1} \cap \cdots \cap P_s^nM_{P_s} \subseteq 
I^cM$ by part~(a).
\end{proof}

\begin{lem} \label{mnqlemma}  
Let $R$ be a normal G-ring
that is complete with respect to a non-zero principal ideal $I = (t)$, and let $M$ be a non-zero finitely generated torsion-free $R$-module.  Let $P_1,\dots,P_s$ be the minimal primes over $I$, and for each $j=1,\dots,s$ and $i \ge 1$ write $P_jR_i$ for the image of $P_j$ in $R_i := R/I^i$.  For each $i \ge 1$ also write $M_i = M/I^iM$, and let $Q_i$ be the set of elements $q$ of the $R_i$-module $M_i$ such that $\ann(q)$ is not contained in any of the ideals $P_1R_i, \cdots, P_sR_i$.
Then the following hold.
\begin{enumerate}[(a)]
\item $Q_i$ is an $R_i$-submodule of $M_i$, and each $q \in Q_i$ satisfies $\ann(q) \not\subseteq P_1R_i \cup \cdots \cup P_sR_i$.
\item Every associated prime of the $R_i$-module $N_i := M_i/Q_i$ is of the form $P_jR_i$ with $1 \le j \le s$.
\item The inverse system $\{M_i\}$ induces inverse systems $\{Q_i\}$ and $\{N_i\}$ by restriction and quotient.
\item If $m_i \in M_i$ and $t^dm_i \in Q_i$ for some $d<i$, then the image of $m_i$ in $M_{i-d}$ lies in $Q_{i-d}$.
\item There is a positive integer $n$ such that for every $i$, $Q_{i-1+n} \to Q_i$ is the zero map.
\item $\displaystyle \lim_\leftarrow M_i = M$, $\displaystyle \lim_\leftarrow Q_i = 0$, and $\displaystyle \lim_\leftarrow N_i = M$.
\end{enumerate}
\end{lem}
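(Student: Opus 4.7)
The argument rests on three structural observations to set up first: each $P_jR_i$ is prime in $R_i$ (since $I^i \subseteq P_j$ makes $R_i/P_jR_i \cong R/P_j$ a domain); the $P_jR_i$ are precisely the minimal primes of $R_i$; and $r \in R$ lies in $P_j$ if and only if its image in every $R_{i'}$ lies in $P_jR_{i'}$ (again because $I \subseteq P_j$). Via prime avoidance, the defining condition of $Q_i$ is equivalent to the existence of a witness $\bar r \in \ann(q) \setminus \bigcup_j P_jR_i$. This gives (a): for $q_1,q_2 \in Q_i$ with witnesses $\bar r_1, \bar r_2$, the product $\bar r_1\bar r_2$ annihilates $q_1+q_2$ and lies outside every $P_jR_i$ by primality; scalar multiplication reuses the same witness. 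For (b), if $\mf p = \ann(\bar n)$ is associated to $N_i$ with $n \in M_i \setminus Q_i$, and $\mf p$ contained some $r$ outside $\bigcup_j P_jR_i$, then $rn \in Q_i$ would supply a witness $r'$ outside $\bigcup_j P_jR_i$; primality then makes $r'r \in \ann(n)$ a witness for $n \in Q_i$, a contradiction. Hence $\mf p \subseteq \bigcup_j P_jR_i$, so $\mf p \subseteq P_jR_i$ by prime avoidance; combined with $\mf p$ containing some minimal prime $P_kR_i$, minimality of the $P_jR_i$ forces $\mf p = P_jR_i$.

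Part (c) is immediate from the stability observation above: a witness $\bar r$ for $q \in Q_i$ reduces to a witness for the image of $q$ in $M_{i-1}$, and the quotients $N_i = M_i/Q_i$ inherit transition maps. For (d), I would lift a witness $\bar r$ for $t^c m_i \in Q_i$ to $\tilde r \in R \setminus \bigcup_j P_j$, together with a lift $\tilde m \in M$ of $m_i$, obtaining $\tilde r t^c \tilde m \in I^iM$. Since $M$ is torsion-free over the domain $R$, the regular element $t^c$ may be cancelled, yielding $\tilde r \tilde m \in I^{i-c}M$; reducing modulo $I^{i-c}$ shows the image of $m_i$ in $M_{i-c}$ lies in $Q_{i-c}$.

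The main obstacle is (e), where a uniform $n$ independent of $i$ is required. The plan is to take $n$ from Lemma~\ref{fglemma}(b) with $c = 1$ and to prove by induction on $k \geq 0$ the strengthened claim: if $\tilde r \in R \setminus \bigcup_j P_j$ and $\tilde r \tilde q \in I^{n+k}M$, then $\tilde q \in I^{k+1}M$. The base case $k = 0$ is Lemma~\ref{fglemma}(b) itself. In the inductive step, the case $k-1$ gives $\tilde q \in I^kM$, so by torsion-freeness $\tilde q = t^k m_1$ uniquely; substituting, $\tilde r t^k m_1 \in I^{n+k}M$, and cancelling the regular $t^k$ yields $\tilde r m_1 \in I^nM$, to which the base case delivers $m_1 \in IM$, whence $\tilde q \in I^{k+1}M$. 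Taking $k = i - 1$, any $q \in Q_{i-1+n}$ lifts to an element of $I^iM$, so $Q_{i-1+n} \to M_i$, and hence $Q_{i-1+n} \to Q_i$, is zero.

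Finally (f) is formal: $M = \varprojlim M_i$ by $I$-adic completeness of $M$; $\varprojlim Q_i = 0$ is immediate from (e); and taking inverse limits in the exact sequences $0 \to Q_i \to M_i \to N_i \to 0$, the eventually-zero transition maps of $(Q_i)$ make $\varprojlim^1 Q_i = 0$, so $\varprojlim N_i \cong \varprojlim M_i = M$.
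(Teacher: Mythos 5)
Your argument for parts (a)--(d) and (f) is correct and essentially identical to the paper's, and your treatment of (e) is a correct variant; the one genuine gap is that you work throughout with ``the domain $R$,'' whereas the lemma only assumes that $R$ is a normal G-ring. This matters in exactly the places you lean on it: Lemma~\ref{fglemma} is stated only for normal \emph{domains}, and your cancellations of $t^c$ and $t^k$ in (d) and (e) require $t$ to be a regular element of $R$, which is not automatic when $R$ is merely normal (it could be a product of domains on one of which $t$ vanishes). The paper's proof therefore opens by writing the Noetherian normal ring $R$ as a finite product of Noetherian normal domains and checking that $M_i$, $Q_i$, $N_i$ and the minimal primes over $I$ decompose compatibly, so that it suffices to treat each factor (taking the $n$ of part (e) to be the maximum of the $n$'s for the factors). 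You need to insert this reduction before your argument applies as written; with that paragraph added, everything else goes through.

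On part (e) your route differs from the paper's in a way worth recording. The paper first proves the case $i=1$ directly from Lemma~\ref{fglemma}(b) with $c=1$, and then handles general $i$ by choosing a lift $m$ of a putative nonzero image, extracting the maximal power $t^d$ dividing $m$ (which uses $\bigcap_h t^hM=0$), applying part~(d) to strip off $t^d$, and deriving a contradiction from the $i=1$ case. You instead prove by induction on $k$ that $\tilde r\tilde q\in I^{n+k}M$ with $\tilde r\notin P_1\cup\cdots\cup P_s$ forces $\tilde q\in I^{k+1}M$, which yields (e) immediately with the same $n$. This is cleaner: it avoids the appeal to part~(d) and to the separatedness of the $t$-adic topology on $M$, at the cost of no extra hypotheses, and it is correct once the domain reduction is in place.
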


\begin{proof}
Recall that every Noetherian normal ring is a finite product of Noetherian normal domains; see \cite[Lemma~030C]{stacks}. 
Hence we may write $R \cong R^{(1)} \times \cdots \times R^{(s)}$, where each factor is a Noetherian normal domain; and correspondingly, we have
$M_i \cong M_i^{(1)} \times \cdots \times M_i^{(s)}$, 
$Q_i \cong Q_i^{(1)} \times \cdots \times Q_i^{(s)}$, and $N_i \cong N_i^{(1)} \times \cdots \times N_i^{(s)}$.  Here 
the set of associated primes of~$N_i$ is the union of the sets of associated primes of $N_i^{(1)}, \dots, N_i^{(s)}$.  Thus 
in order to prove the lemma in general, it suffices to prove it in the special case in which $R$ is a domain, by 
applying the special case to each factor.  Here, for the proof of part~(e), we can take $n$ to be the maximum of the values $n^{(1)},\dots,n^{(s)}$ corresponding to the factors.

So for the remainder of the proof we assume that $R$ is a domain.

Since each ideal $P_j \subset R$ is prime and contains $I$, it follows that $P_jR_i \subset R_i = R/I^i$ is also prime.  Thus if $q \in Q_i$ then $\ann(q)$ is not contained in $\Pi_i := P_1R_i \cup \cdots \cup P_sR_i$, by prime avoidance.  Now take $q_1,q_2 \in Q_i$.  
Since $\ann(q_i) \not\subseteq \Pi_i$, there
exist elements $r_1,r_2 \in R_i \smallsetminus \Pi_i$ that annihilate $q_1,q_2$ respectively.  So $r_1r_2 \in R_i$ is not in $\Pi_i$ and it annihilates $q_1+q_2$.  Also, for any $r \in R_i$, the above element $r_1 \in R_i \smallsetminus \Pi_i$ annihilates $rq_1$.  Hence $q_1+q_2$ and $rq_1$ lie in $Q_i$.  So $Q_i$ is an $R_i$-submodule of $M_i$, proving~(a).

Next, let $\bar m = m + Q_i \in N_i$, with $m \in M_i$.  If there exists $r \in R_i$ that is not in $\Pi_i$ and such that $rm \in Q_i$, then by part~(a) there exists $s \in R_i$ such that $srm = 0 \in M_i$ and $s$ is not in $\Pi_i$.  But then $sr \in R_i$ is also not in $\Pi_i$; hence $m \in Q_i$ and thus $\bar m=0$.  This shows that the annihilator of each non-zero element of $N_i$ is contained in $\Pi_i$, and hence is contained in one of the ideals $P_1R_i, \cdots, P_sR_i$ by prime avoidance.  That is, every associated prime of $N_i$ is contained in some $P_jR_i$, $j=1,\dots,s$.  But each $P_j$ is a minimal prime over $I$ and hence over $I^i$; thus $P_jR_i$ is a minimal prime of $R_i$.  
Therefore every associated prime of the $R_i$-module $N_i$ is among $P_1R_i, \cdots, P_sR_i$, proving (b).

Since $(t) \subseteq P_i$, the surjection $M_i \to M_{i-1}$ restricts to a map $Q_i \to Q_{i-1}$, and so the 
modules $Q_i$ form an inverse system.
It follows that
the maps $Q_i \to Q_{i-1}$ yield well-defined surjections $N_i \to N_{i-1}$, so that the modules $N_i$ also form an inverse system.
This proves (c).

By \cite[Lemma~00MA, (3)]{stacks}, 
we have $\displaystyle M = M \otimes_R R = M \otimes_R \lim_\leftarrow R_i = \lim_\leftarrow M_i$.  This proves the first part of (f).

For part (d), by induction we are reduced to the case that $d=1$, with $i \ge 2$.  So 
suppose that $m_i \in M_i$ and $tm_i \in Q_i$.  Let $m \in M$ be an element such that $m_i$ is the image of $m$ in $M_i$.  By definition of $Q_i$, there exists $r_i \in R$ such that $r_i tm \in t^iM$ and $\bar r_i \not\in P_1R_i,\dots,P_sR_i$, where $\bar r_i \in R_i$ is the image of $r_i$.  Write 
$r_i tm = t^im'$ for some $m' \in M$.  Since $M$ is torsion-free, and since the non-zero element $t$ is regular (because $R$ is a domain), it follows that 
$r_im = t^{i-1}m'$.
Note that $r_i \not\in P_1,\dots,P_s$; and since each $P_j$ contains $t$, the image of $r_i$ in $R_{i-1}$ is not in any $P_jR_{i-1}$.
Hence the image of $m_i$ in $M_{i-1}$ lies in $Q_{i-1}$.  This proves (d) in the case $d=1$, and hence in the general case.  

Next, we show that (e) holds for the integer $n$ obtained by setting $c=1$ in Lemma~\ref{fglemma}(b).  We first treat the case where $i=1$; i.e., we show that the image of $Q_n \to Q_1$ is trivial.  Namely, given a non-zero element $m_n \in Q_n \subseteq M_n$, we may choose $m \in M$ lying over $m_n$; and 
then there exists $r \in R$ such that $rm \in t^n M$ and $r \not\in P_1,\dots,P_s$ (as in the previous paragraph).    
By the defining property of $n$, it follows that $m \in t^cM = tM$.  Hence the image of $m$ in $M_1$ is trivial.  But this element is the same as the image of $m_n$ in $Q_1$; and so this proves (e) in the case $i=1$.  

For a more general value of $i$ in the assertion of (e), 
suppose for the sake of contradiction that $m_{n+i-1} \in Q_{n+i-1} \subseteq M_{n+i-1}$ is an element whose image $m_i \in Q_i \subseteq M_i$ is non-zero.  Pick a representative $m \in M$ of $m_{n+i-1}$.  Then 
$m \not\in t^iM$.  So there is a maximum integer $d \ge 0$ such that $m \in t^d M$, and $d < i$.
Thus we may write $m=t^d m'$ for some $m' \in M$ such that $m' \not\in tM$.  
Let $m'_{n+i-1}$ be the image of $m'$ in $M_{n+i-1}$.  
Thus $t^d m'_{n+i-1} = m_{n+i-1} \in Q_{n+i-1} \subseteq M_{n+i-1}$; and so the image $m'_{n+i-1-d}$ of $m'_{n+i-1}$ in $M_{n+i-1-d}$ lies in $Q_{n+i-1-d}$, by part (d).  Let $m_n' \in Q_n \subseteq M_n$ and $m_1' \in Q_1 \subseteq M_1$ be the images of $m'_{n+i-1-d}$.  (Note that $n+i-1-d \ge n \ge 1$.)  Thus $m_1'$ is the image of $m_n'$; and $m_1' \ne 0 \in M_1 = M/tM$ because $m' \not\in tM$.  But by the previous paragraph, the image of $Q_n \to Q_1$ is trivial.  This contradiction proves (e). 

Part (e) implies that $\displaystyle \lim_\leftarrow Q_i = 0$, which is the second part of (f).  For the third part of (f), note that part (e) implies that 
the inverse system $\{Q_n\}$ satisfies the Mittag-Leffler condition 
(see \cite[Section~0594] {stacks}).  
Since $0 \to Q_i \to M_i \to N_i \to 0$ is exact, it then follows from 
\cite[Lemma~0598]{stacks} 
that $\displaystyle 0 \to \lim_\leftarrow Q_i \to M \to \lim_\leftarrow N_i \to 0$ is exact.  Since $\displaystyle \lim_\leftarrow Q_i=0$, 
the map $\displaystyle M \to \lim_\leftarrow N_i$ is an isomorphism,
as asserted.  This completes the proof in the case that~$R$ is a domain, and thus also in the general case.
\end{proof}

\section{Formal schemes and patches} \label{formal}

Let $T$ be a complete discrete valuation ring with uniformizer $t$, and let $\ms X$ be an integral normal $T$-scheme of finite type having function field $F$.  Let $X := \ms X_s^{\red}$ be the reduced closed fiber of $\ms X$, where $\ms X_s$ is the fiber of $\ms X$ over the closed point $s$ of $\Spec(T)$.
Given an open subset $\ms U \subset \ms X$, we may consider the $t$-adic completion $\wh{\mc O_{\ms X}(\ms U)}$ of the ring $\mc O_{\ms X}(\ms U)$.  Also, given any subset $U$ of $X$, we may take the subring $\mc O_{\ms X,U} = \bigcap_{P \in U} \mc O_{\ms X,P}$ of $F$ consisting of the rational functions on $\ms X$ that are regular at every point of $U$; this is normal since each local ring $\mc O_{\ms X,P}$ is.  We write $\wh{\mc O}_{\ms X,U}$ for its $t$-adic completion.

Consider the formal scheme $\mf X = \ms X_{/X}$ obtained by completing $\ms X$ along $X$, as in \cite[Section 10.8]{EGA1}.  
The underlying topological space of the ringed space $\mf X$ is $X$; and 
the structure sheaf $\mc O_{\mf X}$ is the 
inverse limit of the $\mc O_{\ms X}$-modules $\mc O_{X_n}$, where $X_n$ is the fiber of $\ms X$ over $\Spec(T/(t^n))$.  This inverse limit is defined because the morphisms $X \to X_n \to X_{n+1}$ are homeomorphisms by \cite[5.1.2, 5.1.3]{EGA1}, and so the underlying spaces may be identified.  Similarly, we may identify each open subset $U \subseteq X$ with an open subset $\mf U \subseteq \mf X$ as topological spaces (though not as ringed spaces).  With $U$ corresponding to $\mf U$, we will often write $\mc O_{\mf X}(U)$ for the $t$-adically complete ring $\mc O_{\mf X}(\mf U)$.  Similarly, if $\ms F$ is a sheaf of $\mc O_{\mf X}$-modules, we may write $\ms F(U)$ for $\ms F(\mf U)$.  Here the structure sheaf of $\mf U$ is the restriction of that of $\mf X$; and so for an open subset $V \subseteq U$, we have $\mc O_{\mf U}(V) = \mc O_{\mf X}(V)$.

If $U$ is an affine open subset of $X$, then the corresponding open subset $U_n \subseteq X_n$ is also affine, by \cite[Proposition~5.1.9]{EGA1}.  Since the sheaf and presheaf inverse limits of sheaves coincide, $\displaystyle \mc O_{\mf X}(U) = \lim_{\leftarrow} \mc O_{X_n}(U_n)$; here $\mc O_{X_n}(U_n) = \mc O_{\mf X}(U)/(t^n)$.  
Similarly, $\mc O_X(U) = \mc O_{\mf X}(U)/I$, where $I$ is the radical of the ideal $t\mc O_{\mf X}(U)$.  
In this situation, we will often write $\wh R_U$ for the ring $\mc O_{\mf X}(U)$.  By part~(c) of Proposition~\ref{RU versions} below, 
this generalizes the notation used in \cite{HH:FP}, \cite{HHK}, and later papers, where $\wh R_U$ was used for the ring $\wh{\mc O}_{\ms X,U}$ in the case of a projective normal $T$-curve $\ms X$.

\begin{prop} \label{RU versions}
Let $T$ be a complete discrete valuation ring with uniformizer $t$, let $\ms X$ be a normal integral $T$-scheme of finite type, and let
$\mf X$ be the formal completion of $\ms X$ along its reduced closed fiber $X$.  Let $U$ be a non-empty affine open subset of $X$.
\begin{enumerate} [(a)]
\item 
The natural map $\wh{\mc O}_{\ms X,U} \to \mc O_{\mf X}(U)$ is injective.
\item 
Suppose that $\ms U$ is an affine open subset of $\ms X$ such that $\ms U \cap X = U$.  Then the natural maps $\wh{\mc O_{\ms X}(\ms U)} \to \wh{\mc O}_{\ms X,U} \to \mc O_{\mf X}(U)$ are isomorphisms.
\item
If $\ms X$ is a normal 
projective $T$-curve, then such a $\ms U$ exists, and so 
the natural map $\wh{\mc O}_{\ms X,U} \to \mc O_{\mf X}(U)$ is an isomorphism.
\end{enumerate}
\end{prop}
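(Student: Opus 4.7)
The plan is to prove part~(a) first, which then enables a short argument for~(b) via a direct computation; part~(c) reduces to~(b) by constructing the required affine open via projective geometry.

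For part~(a), I will show that if $\alpha \in \wh{\mc O}_{\ms X, U}$ maps to zero in $\mc O_{\mf X}(U)$, then $\alpha = 0$. Represent $\alpha$ as a compatible sequence $(a_n)$ with $a_n \in R := \mc O_{\ms X, U}$. The hypothesis forces the image of $\alpha$ in each stalk $\mc O_{\mf X, P}$ (for $P \in U$) to vanish, and by the standard theory of Noetherian formal schemes this stalk is the $t$-adic completion $\wh{(\mc O_{\ms X, P})}_{(t)}$. Since $\mc O_{\ms X, P}$ is a Noetherian local ring with $t \in \mf m_P$, the completion map is faithfully flat, so $t^n \mc O_{\mf X, P} \cap \mc O_{\ms X, P} = t^n \mc O_{\ms X, P}$, and thus $a_n \in t^n \mc O_{\ms X, P}$ for every $n$ and $P$. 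Because $t$ is a regular element of $F$ and $R = \bigcap_{P \in U} \mc O_{\ms X, P}$, the intersection satisfies $\bigcap_{P \in U} t^n \mc O_{\ms X, P} = t^n R$, so $a_n \in t^n R$ and hence $\alpha = 0$.

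For part~(b), writing $\ms U = \Spec A$, the hypothesis $\ms U \cap X = U$ gives $U_n := X_n \cap \ms U = \Spec(A/t^n A)$, hence $\mc O_{X_n}(U_n) = A/t^n A$. Passing to the inverse limit identifies $\mc O_{\mf X}(U) = \varprojlim_n A/t^n A$ with $\wh A = \wh{\mc O_{\ms X}(\ms U)}$, so the composition $\wh{\mc O_{\ms X}(\ms U)} \to \wh{\mc O}_{\ms X, U} \to \mc O_{\mf X}(U)$ is already an isomorphism; combined with the injectivity of the second map from part~(a), a diagram chase forces both factor maps to be isomorphisms separately. For part~(c), once I produce an affine $\ms U \subseteq \ms X$ with $\ms U \cap X = U$, part~(b) supplies the isomorphism. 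Since $U$ is affine in the reduced projective curve $X$, the complement $S := X \setminus U$ is a finite set of closed points. Fixing a projective embedding $\ms X \hookrightarrow \mbb P^n_T$, Serre vanishing makes the sheaf $\mc I_S \otimes \mc O_{\ms X}(d)$ globally generated for $d \gg 0$; a standard Bertini-type moving argument (using ampleness together with the fact that affineness of $U$ forces $S$ to support an ample divisor on $X$) then produces a section $F \in H^0(\ms X, \mc I_S(d))$ whose restriction to $X$ vanishes exactly on $S$, and $\ms U := \ms X \setminus V(F)$ is the desired affine open.

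The principal obstacle is the commutative-algebra input in part~(a): the descent of $t$-divisibility from $\mc O_{\mf X, P}$ back to $\mc O_{\ms X, P}$ via faithful flatness of the $t$-adic completion of a Noetherian local ring with $t$ in the maximal ideal. The construction in part~(c) is also somewhat delicate in ensuring that the moving-divisor argument yields a divisor whose intersection with $X$ is exactly $S$, but this follows a standard pattern for projective curves.
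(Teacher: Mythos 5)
Parts~(a) and~(b) of your proposal are essentially sound, and your route through part~(a) is in fact a mild streamlining of the paper's: by extracting the vanishing of $a_n$ in \emph{every} stalk $\mc O_{X_n,P}=\mc O_{\ms X,P}/t^n\mc O_{\ms X,P}$ (rather than only at the generic points of $U_n$, which is what forces the paper into the normality argument over height-one primes via algebraic Hartogs), you get $a_n\in t^n\mc O_{\ms X,P}$ for all $P\in U$ directly, and the identity $\bigcap_{P\in U} t^n\mc O_{\ms X,P}=t^n\mc O_{\ms X,U}$ (valid because $t$ is invertible in $F$) finishes without any appeal to normality. One imprecision: the stalk $\mc O_{\mf X,P}$ of the formal structure sheaf is \emph{not} the $t$-adic completion of $\mc O_{\ms X,P}$ (it is a direct limit of completions of localizations, and is not complete in general); your step survives because the map $\wh{\mc O}_{\ms X,U}\to\wh{\mc O_{\ms X,P}}$ factors through $\mc O_{\mf X}(U)$ in any case, so the faithful-flatness argument (or simply the vanishing of $a_n$ in $\mc O_{X_n,P}$) still yields $a_n\in t^n\mc O_{\ms X,P}$. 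Part~(b) is the same two-step argument as the paper's.

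Part~(c) has a genuine gap. You fix a projective embedding and seek $F\in H^0(\ms X,\mc I_S(d))$ with $V(F)\cap X=S$. Global generation of $\mc I_S(d)$ lets a general section avoid any fixed \emph{finite} set of points, but what you need is a section of $\mc O_X(d)$ on the closed fiber whose zero divisor is supported \emph{exactly} on $S$, i.e., an effective divisor supported on $S$ that is linearly equivalent to $d$ times the hyperplane class of $X$. That is obstructed in $\Pic^0(X)$ and no moving argument can remove the obstruction: take $\ms X=E_T$ for $E$ an elliptic curve with Weierstrass embedding, so $\mc O(1)|_X\cong\mc O_X(3O)$, and $S=\{P\}$ with $P$ non-torsion; then $U=X\smallsetminus\{P\}$ is affine, but $nP\sim 3dO$ forces $n=3d$ and $P$ to be $3d$-torsion, so no hypersurface section of any degree meets $X$ exactly in $S$. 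The paper avoids this by \emph{changing the polarization}: it builds an effective Cartier divisor $\ms D$ on $\ms X$ from local equations $r_P$ at the points of $S$ (chosen not to vanish along any component of $X$ through $P$), observes that $\ms D$ is ample because its restriction to the closed fiber is ample (ampleness over a complete discrete valuation ring is detected on the closed fiber), and takes $\ms U$ to be the complement of the support of a very ample multiple of $\ms D$; equivalently, it invokes a finite morphism $\ms X\to\P^1_T$ with $S=\varphi^{-1}(\infty)\cap X$ and sets $\ms U=\varphi^{-1}(\A^1_T)$. A secondary gap: your claim that $S=X\smallsetminus U$ is a finite set of closed points fails when $U$ is not dense in $X$ (then $S$ contains entire components); the paper first contracts the components of $X$ disjoint from $U$ before running the dense-case argument, and your proof needs that reduction as well.
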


\begin{proof}
The assertion is trivial if $\ms X$ consists just of the fiber over the closed point of $\Spec(T)$, and so we may assume that $t$ is a non-zero element of the function field $F$ of $\ms X$.

As above, let $X_n$ be the fiber of $\ms X$ over $\Spec(T/t^n)$, and let $U_n$ be the affine open subset of $X_n$ corresponding to $U$ under the homeomorphism $X \to X_n$.
Since taking inverse limits is left exact, in order to prove part~(a), it suffices to show injectivity modulo $t^n$ for all $n$.  So take $f \in \wh{\mc O}_{\ms X,U}/(t^n) = \mc O_{\ms X,U}/(t^n)$ that lies in the kernel of the map to $\mc O_{\mf X}(U)/(t^n)$.  Let $\til f \in \mc O_{\ms X,U} \subseteq F$ be an element that maps to $f$.  Thus the restriction of $\til f$ to $U_n$ is zero.  
Hence for every generic point $\eta$ of $U_n$, the image of $\til f$ in 
$\mc O_{\ms X,\eta}$ lies in the ideal $(t^n)$,  and so the element $g:=\til f/t^n \in F$ lies in $\mc O_{\ms X,\eta} \subseteq F$.  
Now for every point $P \in U$, if $\mf p$ is a height one prime of $\mc O_{\ms X,P}$, then the localization $(\mc O_{\ms X,P})_{\mf p}$ is either of the form $\mc O_{\ms X,\eta}$ for some generic point $\eta$ of $U_n$ as above (if $t \in \mf p$), or else of the form $\mc O_{\ms X,Q}$ for some codimension one point $Q$ of $\ms X$ 
that is not a generic point $\eta$ and whose closure meets~$U$ (if $t \not\in \mf p$).  In either case $g$ lies in $(\mc O_{\ms X,P})_{\mf p}$, in the latter case using that $\til f \in \mc O_{\ms X,P}$ and that $t$ is a unit in $(\mc O_{\ms X,P})_{\mf p}$.  
Since $\mc O_{\ms X,P}$ is a normal Noetherian domain, it follows from \cite[Corollary~11.4]{Eis} that $g \in \mc O_{\ms X,P}$ for each $P \in U$.  Hence $g \in \bigcap_{P \in U} \mc O_{\ms X,P} = \mc O_{\ms X,U}$.  Thus $\til f =t^ng \in t^n\mc O_{\ms X,U}$, and so $f=0$, yielding part~(a).

In part~(b), since $\ms U$ is an affine open subset of $\ms X$ such that $\ms U \cap X = U$, we have that
$\mc O_{\ms X}(\ms U)/(t^n) = \mc O_{X_n}(U_n)$; and taking inverse limits yields that the map $\wh{\mc O_{\ms X}(\ms U)} \to \mc O_{\mf X}(U)$ is an isomorphism. 
The inclusion $U_n \to \ms U$ induces a map $\mc O_{\ms X}(\ms U)/(t^n) \to \mc O_{\ms X,U}/(t^n) = \wh{\mc O}_{\ms X,U}/(t^n)$.
Since $\mc O_{\ms X}(\ms U)/(t^n) \to \mc O_{\mf X}(U)/ (t^n)$ factors through $\mc O_{\ms X}(\ms U)/(t^n) \to \mc O_{\ms X,U}/(t^n)$, 
by taking inverse limits we find that the isomorphism $\wh{\mc O_{\ms X}(U)} \to \mc O_{\mf X}(U)$ factors through $\wh{\mc O_{\ms X}(U)} \to \wh{\mc O}_{\ms X,U}$.  Hence the map $\wh{\mc O}_{\ms X,U} \to \mc O_{\mf X}(U)$ is surjective.  
So by part~(a) this map is an isomorphism, concluding the proof of part~(b).

To prove (c) we will show that an affine open subset $\ms U \subseteq \ms X$ as above exists 
when $\ms X$ is a normal projective $T$-curve.
First consider the case where $U$ is dense in $X$, so that 
its complement $S$ in $X$ is finite.  Since $U$ is affine, this complement meets each irreducible component of $X$.  By \cite[Proposition~3.3]{HHK:H1}, there is a finite morphism $\varphi:\ms X \to \mbb P^1_T$ such that $S$ is the inverse image of the point at infinity on the closed fiber $\mbb P^1_k$ (where $k$ is the residue field of $T$).  Thus $U$ is the inverse image of $\mbb A^1_k$.  We may then take $\ms U \subset \ms X$ to be the inverse image of $\mbb A^1_T$.  This is affine because the morphism $\varphi$ is finite and hence an affine morphism.

For the proof of (c) in the more general case where $U$ is not necessarily dense in $X$, let $J$ be the set of irreducible components of $X$ that do not meet $U$.
Since $U$ is non-empty, $J$ does not contain every irreducible component of $X$.  Thus by \cite[Section 6.7, Theorem~1, Corollary~3, Proposition~4]{BLR}, we may contract the components in $J$.  That is, there is a proper birational morphism $\pi:\ms X \to \ms Y$, where $\ms Y$ is a projective normal $T$-curve, such that the components of $J$ each map to a point, and $\pi$ is an isomorphism elsewhere.  Thus $U$ maps isomorphically onto its image $V$, which is dense in the reduced closed fiber $Y$ of $\ms Y$.  So by the above special case, there is an affine open subset $\ms V \subseteq \ms Y$ such that $\ms V \cap Y = V$.  The inverse image $\ms U = \pi^{-1}(\ms V)$ is isomorphic to $\ms V$, and so it is an affine open subset of $\ms X$.  Moreover its intersection with $X$ is $U$.  So $\ms U$ is as asserted. 
\end{proof}

\begin{rem}
\begin{enumerate}[(a)]
\item
The injective map $\wh{\mc O}_{\ms X,U} \to \mc O_{\mf X}(U)$ in Proposition~\ref{RU versions}(a) need not be surjective in general, and so the hypothesis in Proposition~\ref{RU versions}(b) cannot be dropped.
See Remark~\ref{different RUs} for an explicit example.
\item
In Proposition~\ref{RU versions}(c), once we reduce as above to the case that $U$ is dense, we can explicitly construct $\ms U$ as follows (following the proof of the result \cite[Proposition~3.3]{HHK:H1} that was cited above):
At each closed point $P \in S = X \smallsetminus U$, take an element $r_P$ in the maximal ideal of the
local ring $\mc O_{\ms X,P}$ such that $r_P$ does not vanish along any component of the closed fiber passing through $P$.  This defines an effective Cartier divisor on $\Spec(\mc O_{\ms X,P})$ whose support passes through $P$, and which is the restriction of an effective Cartier divisor $\ms D_P$ on $\ms X$ whose support meets $X$ precisely at $P$.  Here $\ms D := \sum_{P \in S} \ms D_P$ is an effective Cartier divisor on $\ms X$ whose support meets $X$ precisely at $S$, and so in particular meets each irreducible component of $X$.  Hence the restriction $D$ of $\ms D$ is $X$ is ample (by
\cite[Chapter~7, Proposition~5.5]{Liu}), and thus so is $\ms D$ (by \cite[Chapter 5, Corollary~3.24]{Liu}).  Hence some multiple of $\ms D$ is very ample, and so the complement of its support in $\ms X$ is affine.  We may then take $\ms U$ to be that complement.
\item
In the case where $\ms X$ has dimension greater than one over $T$, even if a given affine open set $U$ is not of the form $\ms U \cap X$, one can still cover $U$ by affine open subsets $V$ of that form, since every point of $U$ has such a neighborhood, by definition of the subspace topology.  Here a finite set of such subsets $V$ suffices, by quasi-compactness.  
\end{enumerate}
\end{rem}

We next study the behavior of the rings $\wh R_U = \mc O_{\mf X}(U)$.

\begin{lem} \label{flat patches}
Let $T$ be a complete discrete valuation ring with uniformizer $t$, let $\ms X$ be a normal integral $T$-scheme of finite type, and let
$\mf X$ be the formal completion of $\ms X$ along its reduced closed fiber $X$.  Let $U$ be an affine open subset of $X$. 
\begin{enumerate} [(a)]
\item 
The natural map $U \to \Spec(\wh R_U)$ is a bijection on closed points.
\item
If $V \subseteq U$ is an affine open subset, then $\wh R_V$ is flat over $\wh R_U$.
\item
If $V_1,\dots,V_n \subseteq U$ are affine open subsets such that $\bigcup_{i=1}^n V_i = U$, then $\prod_{i=1}^n \wh R_{V_i}$ is faithfully flat over $\wh R_U$.
\end{enumerate}
\end{lem}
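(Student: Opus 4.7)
For (a), the key observation is that $\wh R_U$ is $t$-adically complete (as noted above the lemma), so $t$ lies in its Jacobson radical (since $1+ta$ is inverted by $\sum(-ta)^n$). Hence every maximal ideal of $\wh R_U$ contains $t$ and corresponds to a maximal ideal of $\wh R_U/t\wh R_U \cong \mc O_{X_1}(U_1)$. Since $X = X_1^{\red}$, the rings $\mc O_{X_1}(U_1)$ and $\mc O_X(U)$ have the same prime spectrum topologically, and since $U$ is affine, the closed points of $\Spec(\mc O_X(U)) = U$ are exactly the closed points of $U$. The natural map $U \to \Spec(\wh R_U)$ realizes this bijection.

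For (b) and (c), I first observe that $\wh R_U$ is Noetherian by \cite[Lemma~05GH]{stacks}, since $(t)$ is principal and $\wh R_U/t\wh R_U$ is Noetherian. I then handle the basic-open case: if $V = D_U(f) \subseteq U$ and $\til f \in \wh R_U$ lifts $f$, then $\wh R_V$ is the $t$-adic completion of the Noetherian localization $\wh R_U[\til f^{-1}]$ (checked by comparing both sides modulo each $t^n$). Since localization is flat and the $t$-adic completion of a Noetherian ring is flat over it, $\wh R_V$ is flat over $\wh R_U$ in this case. Correspondingly, when each $V_i$ is a basic open of $U$, $\prod_i \wh R_{V_i}$ is flat over $\wh R_U$, and faithfulness follows from (a) as in the argument below, giving (c) in the basic-cover case.

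For a general affine $V \subseteq U$, I cover $V$ by finitely many basic opens $V_i = D_U(f_i) \subseteq V$ of $U$ (using quasi-compactness of $V$); these are also basic opens of $V$. By the basic-cover case of (c) applied to the cover of $V$, the map $\wh R_V \to \prod_i \wh R_{V_i}$ is faithfully flat; by the basic-open case of (b), each $\wh R_{V_i}$ is flat over $\wh R_U$, so $\wh R_U \to \prod_i \wh R_{V_i}$ is flat. Faithfully flat descent of flatness (given $A \to B \to C$ with $B \to C$ faithfully flat, $A \to C$ flat implies $A \to B$ flat) then yields (b). Finally, for (c) in general, flatness is immediate from (b); for faithfulness, every maximal ideal $\mf m \subset \wh R_U$ corresponds by (a) to a closed point $P \in U$, and choosing $i$ with $P \in V_i$, (a) applied to $V_i$ yields a maximal ideal $\mf n \subset \wh R_{V_i}$ with $\mf m = (\wh R_U \to \wh R_{V_i})^{-1}(\mf n)$ (both being the kernel of evaluation at $P$ into its residue field), so $\mf m \wh R_{V_i} \subseteq \mf n$ is proper.

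The main technical point I anticipate will be justifying, in the generality where $U$ need not be of the form $\ms U \cap X$ for some affine $\ms U \subseteq \ms X$, the Noetherianness of $\wh R_U$ and the identifications $\wh R_U/t^n\wh R_U = \mc O_{X_n}(U_n)$ (which are used repeatedly above). Both should reduce to the case covered by Proposition~\ref{RU versions}(b) via a finite covering of $U$ by opens of that form together with the sheaf property of $\mc O_\mf X$.
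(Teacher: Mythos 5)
Your proposal is correct, and parts (a) and (c) essentially coincide with the paper's argument: (a) rests on $t$ (or its radical) lying in the Jacobson radical of the complete ring $\wh R_U$, and (c) on the fact that by (a) every maximal ideal of $\wh R_U$ is the contraction of a maximal ideal of some $\wh R_{V_i}$. Part (b), however, is where you genuinely diverge. The paper works level by level: $V_n \subseteq U_n$ is an inclusion of affine opens of $X_n$, so $\mc O_{X_n}(V_n)$ is flat over $\mc O_{X_n}(U_n)$ for each $n$, and flatness of $\wh R_V = \lim_n \mc O_{X_n}(V_n)$ over $\wh R_U$ is then obtained from a limit lemma (\cite[Lemma~0912]{stacks}); no Noetherianness of $\wh R_U$ is needed at this stage (the paper only establishes it in the subsequent lemma, via Gabber's theorem on quasi-excellence). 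You instead identify $\wh R_V$, for $V$ a basic open $D_U(f)$, with the $t$-adic completion of the localization $\wh R_U[\til f^{-1}]$, get flatness from ``localization is flat'' plus ``completion of a Noetherian ring is flat,'' and then bootstrap to general affine $V$ by covering it with basic opens of $U$ and descending flatness along the faithfully flat map $\wh R_V \to \prod_i \wh R_{V_i}$. This is a valid alternative: the descent step (given $A \to B \to C$ with $B \to C$ faithfully flat and $A \to C$ flat, $A \to B$ is flat) is standard, and your Noetherianness input via \cite[Lemma~05GH]{stacks} (complete with respect to a finitely generated ideal, Noetherian quotient) is lighter than the paper's appeal to Gabber. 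What your route costs is exactly the technical point you flag at the end: you must justify the identifications $\wh R_U/t^n\wh R_U = \mc O_{X_n}(U_n)$ and the completeness/Noetherianness of $\wh R_U$ for an arbitrary affine open $U$ of $X$; the paper takes the first of these as established in the discussion preceding Proposition~\ref{RU versions} (surjective transition maps with kernels generated by powers of $t$), and with that in hand your remaining steps go through. What the paper's route buys is uniformity (one limit lemma handles all affine $V$ at once); what yours buys is a more concrete description of $\wh R_V$ for basic opens and independence from the limit-flatness lemma.
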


\begin{proof}
Let $I$ be the radical of the ideal $t\mc O_{\mf X}(U)$.
Since $\wh R_U/I = \mc O_X(U)$, the natural map $U \to \Spec(\wh R_U)$ induces a bijection between the maximal ideals of $\mc O_X(U)$ and the maximal ideals of $\wh R_U$ that contain $I$.  But since $\wh R_U$ is $I$-adically complete, the ideal $I$ is contained in the Jacobson radical of $\wh R_U$ (see \cite[Proposition~23.G]{Mats}), and hence in every maximal ideal of $\wh R_U$.  So part~(a) follows.

For~(b), let $X_n$ be the reduction of $X$ modulo $t^n$, and let $U_n,V_n$ be the homeomorphic images of $U,V$ under $X \to X_n$. 
Then $V_n \subseteq U_n$ is an inclusion of affine open subsets of $X_n$ by
\cite[Proposition~5.1.9]{EGA1}, and so $\mc O_{X_n}(V_n)$ is flat over $\mc O_{X_n}(U_n)$.  Here $\mc O_{\mf X}(U)/(t^n) = \mc O_{X_n}(U_n)$ and similarly for $V$ and $V_n$.  
By \cite[Lemma~0912]{stacks}, 
$\wh R_V = \mc O_{\mf X}(V)$ is flat over $\wh R_U = \mc O_{\mf X}(U)$.  So part~(b) holds.

By part~(a), every maximal ideal of $\wh R_U$ is of the form $\frak m_{U,P}$ for some closed point $P$ of $U = \bigcup_{i=1}^n V_i$.  Here $P$ lies on some $V_i$, and so $\frak m_{U,P}$ is the contraction of the maximal ideal $\frak m_{V_i,P}$ of $\wh R_{V_i}$.  Thus every maximal ideal of $\wh R_U$ is the contraction of a maximal ideal of $\prod_{i=1}^n \wh R_{V_i}$.
Also, $\prod_{i=1}^n \wh R_{V_i}$ is flat over $\wh R_U$ because each $\wh R_{V_i}$ is, by part~(b).
Thus by \cite[Proposition~I.3.5.9]{Bo:CA}, $\prod_{i=1}^n \wh R_{V_i}$ is faithfully flat over $\wh R_U$; i.e.,
part~(c) holds.
\end{proof}

\begin{lem}  \label{RU props}
Let $T$ be a complete discrete valuation ring with uniformizer $t$, let $\ms X$ be a normal integral $T$-scheme of finite type, and let
$\mf X$ be the formal completion of $\ms X$ along its reduced closed fiber $X$.  Let $U$ be an affine open subset of $X$.  
\begin{enumerate} [(a)]
\item 
The ring $\wh R_U$ is quasi-excellent and normal (and in particular, Noetherian).
\item 
If $U = \ms U \cap X$ for some 
affine open subset $\ms U \subseteq \ms X$, then $\wh R_U$ is an excellent normal ring.
\item
The ring $\wh R_U$ is a domain if and only if $U$ is connected.
\item
If $U$ is a disjoint union of affine open subsets $U_i$, then the natural map $\wh R_U \to \prod_i \wh R_{U_i}$ is an isomorphism.
\end{enumerate}
\end{lem}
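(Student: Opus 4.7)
I would prove the four parts in the order (d), (b), (a), (c), since (a) relies on (b), and (c) relies on both (a) and (d). Part (d) follows because a clopen decomposition $U = \bigsqcup_i U_i$ lifts, via the homeomorphism $X \cong X_n$, to a disjoint decomposition $U_n = \bigsqcup_i U_{i,n}$ of each fiber thickening, so $\mc O_{X_n}(U_n) = \prod_i \mc O_{X_n}(U_{i,n})$; passing to the inverse limit over $n$ (which commutes with finite products) yields $\wh R_U \cong \prod_i \wh R_{U_i}$. For (b), Proposition~\ref{RU versions}(b) identifies $\wh R_U$ with the $t$-adic completion $\wh A$ of $A := \mc O_{\ms X}(\ms U)$. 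Since $T$ is an excellent complete DVR and $A$ is a finitely generated $T$-algebra, $A$ is an excellent normal domain; standard results on $t$-adic completion of excellent rings give that $\wh A$ is excellent, and normality is preserved because $A$ is a G-ring, so $A \to \wh A$ has geometrically regular fibers.

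For (a), cover $U = \bigcup_{i=1}^m V_i$ by finitely many affine opens of the form $V_i = \ms V_i \cap X$ with $\ms V_i \subseteq \ms X$ affine open (possible by quasi-compactness and the definition of the subspace topology). By (b), each $\wh R_{V_i}$ is excellent and normal, hence so is $S := \prod_i \wh R_{V_i}$. By Lemma~\ref{flat patches}(c), $S$ is faithfully flat over $R := \wh R_U$. Noetherianness of $R$ follows from the descent of ascending chains of ideals (using $I_j = I_j S \cap R$), and normality of $R$ descends from $S$ along the faithfully flat extension. Quasi-excellence of $R$ is the principal technical point: I would verify it using Lemma~\ref{flat patches}(a) to identify maximal ideals of $R$ with closed points of $U$, and then descend the G-ring and J-2 properties from the excellent local rings of $S$ via the flatness of Lemma~\ref{flat patches}(b).

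For (c), part (d) handles the disconnected case, yielding a nontrivial product decomposition of $\wh R_U$. Conversely, if $U$ is connected, then by (a), $R := \wh R_U$ is Noetherian normal, hence a finite product of normal domains indexed by the connected components of $\Spec R$ (see \cite[Lemma~030C]{stacks}). A nontrivial idempotent $e \in R$ would reduce to a nontrivial idempotent $\bar e \in R/(t) = \mc O_{X_1}(U_1)$: since $R$ is $t$-adically complete, $t$ lies in the Jacobson radical of $R$, so the Krull intersection theorem excludes $\bar e = 0$ (which would force $e \in \bigcap_n t^n R = 0$) and, symmetrically, $\bar e = 1$. But such a $\bar e$ would yield an open-closed decomposition of $U_1 \cong U$, contradicting connectedness. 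The main obstacle across the whole proof is quasi-excellence in (a), since neither the G-ring nor the J-2 property descends under arbitrary faithfully flat maps in general, and one must use the geometric structure via Lemma~\ref{flat patches} to control local rings individually.
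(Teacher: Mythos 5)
Parts (b), (c) and (d) of your proposal are essentially correct and close to the paper's argument. For (b) the paper does exactly what you describe (identify $\wh R_U$ with $\wh{\mc O_{\ms X}(\ms U)}$ via Proposition~\ref{RU versions}(b), then use excellence of finitely generated $T$-algebras and its preservation under $t$-adic completion); note, though, that the "standard result" you invoke is Gabber's theorem on ideal-adic completion of excellent rings (cited in the paper as \cite[Main Theorem 2]{KuSh}) — it is not classical. For (c) your idempotent/Krull-intersection argument is a valid variant of the paper's argument via closed points of connected components, and (d) is the same sheaf-property observation as in the paper. Your descent of normality and Noetherianness in (a) from the faithfully flat cover $\prod_i \wh R_{V_i}$ also matches the paper.

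The genuine gap is quasi-excellence in (a). You correctly flag that neither the G-ring nor the J-2 property descends along arbitrary faithfully flat maps, but your proposed remedy — "descend the G-ring and J-2 properties from the excellent local rings of $S$ via the flatness of Lemma~\ref{flat patches}(b)" — does not supply the missing ingredient. The standard descent statements require the faithfully flat map to be \emph{regular} (geometrically regular fibers), and proving that $\wh R_U \to \wh R_{V_i}$ is regular is essentially equivalent to (indeed circular with) the excellence one is trying to establish: one would need to control the fibers over primes of $\wh R_U$ not containing $t$, which live in the generic fiber where no finite-type structure is available. Knowing that the maps agree on completions at closed points does not help, since regularity of $R_{\mf m}\to\wh{R_{\mf m}}$ does not follow from regularity of $(\wh R_{V_i})_{\mf n}\to\wh{(\wh R_{V_i})_{\mf n}}$ without already knowing regularity of $R_{\mf m}\to(\wh R_{V_i})_{\mf n}$. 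The paper sidesteps all of this by applying Gabber's theorem in its quasi-excellent form (\cite[Theorem~5.1]{KuSh}) directly to $\wh R_U$: this ring is complete along the ideal $I=\rad(t\wh R_U)$, and the quotient $\wh R_U/I=\mc O_X(U)$ is of finite type over the residue field $k$, hence excellent; Gabber's theorem then gives quasi-excellence (and in particular Noetherianness) of $\wh R_U$ at once, with the faithfully flat cover used only afterwards to descend normality.
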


\begin{proof}
Let $I$ be the radical of $t\wh R_U$, and let $U_n \subseteq X_n$ be as in the discussion before Proposition~\ref{RU versions}.  As noted there,
$\displaystyle \wh R_U = \mc O_{\mf X}(U) = \lim_{\leftarrow} \mc O_{X_n}(U_n)$, with $\mc O_{X_n}(U_n) = \wh R_U/(t^n)$ and $\mc O_X(U) = \wh R_U/I$.  Since $\mc O_X(U)$ is of finite type over the residue field $k$ of $T$, it is excellent, and in particular quasi-excellent.  Hence $\wh R_U$ is quasi-excellent by a theorem of Gabber (see \cite[Theorem~5.1]{KuSh}).  This proves the first part of~(a), that $\wh R_U$ is quasi-excellent (and hence Noetherian).  Note also that since $\mc O_X(U) = \wh R_U/I$, we can identify $U$ with the closed subset of $\Spec(\wh R_U)$ defined by the ideal~$I$. 

Under the hypothesis of part~(b),  
$\wh R_U = \wh{\mc O_{\ms X}(\ms U)}$, by Proposition~\ref{RU versions}(b).  
Write $\ms U = \Spec(A) \subseteq \ms X$.
The inclusion $\iota:U \hookrightarrow \ms U$ corresponds to a morphism $A \to \mc O_X(U)$ that factors through the $t$-adic completion $\wh A = \wh{\mc O_{\ms X}(\ms U)}$ of $A$.  That is, $\iota$ factors through $\Spec(\wh R_U)$, corresponding to the natural embedding $U \to \Spec(\wh R_U)$.
By \cite[34.B]{Mats}, $T$ is excellent; hence so is $\mc O_{\ms X}(\ms U)$, being a finitely generated $T$-algebra.  So the $t$-adic completion $\wh R_U$ of $\mc O_{\ms X}(\ms U)$ is also excellent, by  \cite[Main~Theorem~2]{KuSh}.  By \cite[33.I, 34.A]{Mats}, $\wh R_U$ is a normal ring, 
since it is the completion of the excellent normal ring
$\mc O_{\ms X}(\ms U)$.   This proves part~(b).

For the last part of~(a), concerning normality, recall that any affine open subset $U$ of $X$ is the union of finitely many open subsets $V_i$ of the form $\ms V_i \cap X$, with $\ms V_i$ an affine open subset of $\ms X$.  By part~(b), each $\wh R_{V_i}$ is normal; hence so is $\prod_i \wh R_{V_i}$.  Also, by Lemma~\ref{flat patches}(c), $\prod_i \wh R_{V_i}$  is faithfully flat over $\wh R_U$.  So by \cite[Lemma~030C]{stacks}, 
$\wh R_U$ is normal, completing the proof of part~(a).
 
Since $\Spec(\wh R_U)$ is normal, it is in particular reduced.  So $\Spec(\wh R_U)$ is integral if and only if it is connected.  But since every connected component of $\Spec(\wh R_U)$ contains a closed point, it follows from 
Lemma~\ref{flat patches}(a)
that $\Spec(\wh R_U)$ is connected if and only if $U$ is connected.  Thus part~(c) follows.

Part~(d) is immediate from the definition of $\wh R_U$ as $\mc O_{\mf X}(U)$ together with the fact that $\mc O_{\mf X}$ is a sheaf.
\end{proof}

The next result further relates $\wh R_U$ to $\wh R_V$, where $V \subseteq U$ are affine open subsets of $X$.

\begin{lem} \label{inclusion lemma}  
Let $T$ be a complete discrete valuation ring,
and let $\ms X$ be a normal integral $T$-scheme of finite type, with reduced closed fiber $X$.  Let $V \subseteq U$ be an inclusion of non-empty affine open subsets of $X$.
\begin{enumerate} [(a)]
\item The contraction of each minimal prime ideal of $\wh R_V$ is a minimal prime ideal of $\wh R_U$.
\item Every regular element of $\wh R_U$ has the property that its image is regular in $\wh R_V$.
\item The natural map $\wh R_U \to \wh R_V$ is injective if and only if $V$ meets each connected component of $U$.  In particular, it is injective if $V$ is dense in $U$, or if $U$ is connected.
\item If the map $\wh R_U \to \wh R_V$ is injective, it
induces a well-defined injection between the total rings of fractions of $\wh R_U$ and $\wh R_V$.
\end{enumerate}
\end{lem}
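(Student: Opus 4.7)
\medskip

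My plan is to derive all four parts from the flatness of $\wh R_U \to \wh R_V$ established in Lemma~\ref{flat patches}(b), together with the product decomposition of $\wh R_U$ over connected components of $U$ from Lemma~\ref{RU props}(c,d). The unifying theme is that flatness is the right tool for transferring information about regular/zero-divisor structure between the two rings.

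For part~(a), I would invoke the standard fact that flat ring maps satisfy going-down (e.g., \cite[Lemma~00HS]{stacks}): given a minimal prime $\mf q$ of $\wh R_V$ with contraction $\mf p \subseteq \wh R_U$, going-down applied to any prime $\mf p' \subsetneq \mf p$ would produce a prime of $\wh R_V$ properly contained in $\mf q$, contradicting minimality. Part~(b) is then even more direct from flatness: multiplication by a regular element $r \in \wh R_U$ is an injective self-map of $\wh R_U$, and tensoring with the flat module $\wh R_V$ preserves injectivity, so multiplication by the image of $r$ on $\wh R_V$ is injective, which is exactly the definition of regularity.

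For part~(c), since $U$ is an affine open subset of a locally Noetherian scheme it has finitely many connected components $U_1,\dots,U_r$, each of which is open and closed in $U$ and therefore itself affine; similarly $V_i := V \cap U_i$ is clopen in $V$ and therefore affine. Lemma~\ref{RU props}(d) gives $\wh R_U \cong \prod_i \wh R_{U_i}$ and $\wh R_V \cong \prod_i \wh R_{V_i}$, and the map respects this decomposition factor by factor. By Lemma~\ref{RU props}(c) each $\wh R_{U_i}$ is a domain. If $V_i = \emptyset$ then $\wh R_{V_i} = 0$ and the $i$-th factor kills the whole of $\wh R_{U_i}$, so the overall map has a non-trivial kernel. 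Conversely, if every $V_i$ is non-empty, then each $\wh R_{U_i} \to \wh R_{V_i}$ is a flat map from a domain to a non-zero ring, which is automatically injective (any non-zero kernel element $a$ would give $\cdot a: \wh R_{U_i} \to \wh R_{U_i}$ injective, tensoring with the flat non-zero $\wh R_{V_i}$ would force $1 = 0$ there). The two special cases follow: density of $V$ in $U$ forces $V$ to meet every irreducible, hence every connected, component; and if $U$ is connected there is only one component, which the non-empty $V$ must meet.

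Part~(d) is then formal from (b) and (c): write $K(A)$ for the total ring of fractions of a ring $A$. Composing $\wh R_U \hookrightarrow \wh R_V \hookrightarrow K(\wh R_V)$ carries regular elements to units by part~(b), so the universal property of localization at the regular elements of $\wh R_U$ produces a well-defined map $K(\wh R_U) \to K(\wh R_V)$. For injectivity: if $a/b \in K(\wh R_U)$ maps to zero, then the image of $a$ in $\wh R_V$ is annihilated by the regular image of $b$, hence vanishes, hence $a = 0$ by (c). The only mildly delicate point in the whole argument is the affineness of the clopen subsets $U_i$ and $V_i$ of affine schemes used in (c); everything else is a routine application of flatness.
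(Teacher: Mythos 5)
Your proposal is correct, and parts (a), (c), and (d) follow essentially the same route as the paper: going-down from the flatness in Lemma~\ref{flat patches}(b) for (a), the product decomposition over connected components from Lemma~\ref{RU props}(c,d) for (c), and the universal property of localization at regular elements for (d). The genuine difference is in part (b). The paper proves the contrapositive using the structure of zero-divisors in a reduced Noetherian ring: since $\wh R_U$ and $\wh R_V$ are normal, hence reduced, the zero-divisors of each are exactly the union of the minimal primes, so an element of $\wh R_U$ whose image is a zero-divisor lies in a minimal prime of $\wh R_V$, hence by part (a) in a minimal prime of $\wh R_U$, and is thus already a zero-divisor. You instead tensor the injective multiplication-by-$r$ map $\wh R_U \to \wh R_U$ with the flat module $\wh R_V$; this is more elementary and more general (it needs neither normality nor reducedness, and makes (b) independent of (a)), whereas the paper's version is the natural companion to its proof of (a) and reuses the zero-divisor description it needs elsewhere. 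The same flatness trick reappears in your forward direction of (c), where the paper instead deduces injectivity on each connected component from (b) applied to the domain $\wh R_{U_i}$; both work. One small imprecision in your (d): if $a/b$ maps to zero, then the image of $a$ is annihilated by \emph{some} regular element of $\wh R_V$ (coming from the definition of equality in the localization), not necessarily by the image of $b$; the conclusion that the image of $a$ vanishes, and hence $a=0$ by injectivity, is unaffected.
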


\begin{proof}
Since $\wh R_V$ is flat over $\wh R_U$ by Lemma~\ref{flat patches}(b),
the going down theorem holds for this ring extension by \cite[Theorem~5.D]{Mats}. 
Hence the contraction of every minimal prime ideal of $\wh R_V$ is a minimal prime ideal of $\wh R_U$, proving part~(a).

To prove part (b), we show that an element of $\wh R_U$ that becomes a zero-divisor in $\wh R_V$ is already a zero-divisor in $\wh R_U$.  By Lemma~\ref{RU props}(a), the rings $\wh R_V$ and $\wh R_U$ are normal and in particular reduced.
Hence by \cite[Lemmas~0EMA, 05C3]{stacks},   
the set of zero-divisors in $\wh R_V$ (resp.\ $\wh R_U$) is the union of the minimal primes of that ring.  
So if the image $r' \in \wh R_V$ of some $r \in \wh R_U$ is a zero-divisor in $\wh R_V$, then $r'$ lies in a minimal prime of $\wh R_V$.
By part~(a), $r$ lies in a minimal prime of $\wh R_U$, and so is a zero-divisor in $\wh R_U$, as needed.   

In part~(c), the second assertion is immediate from the first.  For the forward direction of the first assertion, in the special case that $U$ is connected, $\wh R_U$ is a domain by Lemma~\ref{RU props}(c), and hence every non-zero element $r \in \wh R_U$ is regular.  Thus by part~(b) above, the image of $r$ in $\wh R_V$ is regular and hence non-zero.  Thus the map is injective.  For the more general case, let $U_1,\dots,U_n$ be the connected components of $U$, and let $V_i = U_i \cap V$.  Thus each $U_i$ and $V_i$ is an affine open set, with $\wh R_U \cong \prod_i \wh R_{U_i}$ and $\wh R_V \cong \prod_i \wh R_{V_i}$ by Lemma~\ref{RU props}(d).  By the above special case, each $\wh R_{U_i} \to \wh R_{V_i}$ is injective.  Hence so is $\wh R_U \to \wh R_V$, showing the forward direction.  For the reverse direction, if $V$ does not meet some connected component $U_j$ of $U$, let $r \in \wh R_U \cong \prod_i \wh R_{U_i}$ be the element given by $1$ in $\wh R_{U_j}$ and by $0$ in every other $\wh R_{U_i}$.  Then the image of $r$ in $\wh R_V$ is $0$, and so the map $\wh R_U \to \wh R_V$ is not injective.

For~(d), let $S_U, S_V$ be the sets of regular elements in $\wh R_U, \wh R_V$.  The total rings of fractions of these rings are $S_U^{-1}\wh R_U$ and $S_V^{-1}\wh R_V$.  By 
part~(b), the injection $\wh R_U \to \wh R_V$
restricts to an injection $S_U \to S_V$.  Thus $\wh R_U \to \wh R_V$ induces a map $S_U^{-1}\wh R_U \to S_V^{-1}\wh R_V$, which factors through $S_U^{-1}\wh R_V$.  Here $S_U^{-1}\wh R_U \to S_U^{-1}\wh R_V$ is injective because localization is exact; and $S_U^{-1}\wh R_V \to S_V^{-1}\wh R_V$ is injective because the elements of $S_V$ are regular in $\wh R_V$.  This proves~(d).
\end{proof}

The next lemma controls the behavior of the principal ideal $(t)$ in the rings corresponding to different patches.

\begin{lem}\label{associated}  
Let $T$ be a complete discrete valuation ring with uniformizer $t$, and let $\ms X$ be a normal integral  $T$-scheme of finite type.
Let $U$ be an affine open subset of the reduced closed fiber $X$ of $\ms X$, and let $U' \subseteq U$ be an affine dense open subset.
Write $\wh R$ and $\wh R'$ for $\wh R_U$ and $\wh R_{U'}$, respectively. For $i\geq 1$, let $R_i, R_i'$ denote the quotients of $\wh R, \wh R'$ by the ideals generated by $t^i$ in the respective rings.
Let $\{P_1,\ldots, P_s\}$ be the set of minimal primes over $t\wh R$,
and write $P_j\wh R', P_jR_i, P_jR'_i$ for the extension of $P_j$ to $\wh R', R_i, R'_i$, respectively.
Then 
\begin{enumerate}
\item \label{associated-patches} The minimal primes over $t\wh R'$ are the ideals $P_j\wh R'$ ($j=1,\ldots, s$). 
\item \label{associated-extension} $P_jR_i$ is the contraction of $P_jR_i'$ to $R_i$.
\item \label{associated-complement} The ideal $J_i \subset R_i$ defining the complement of $\operatorname{Spec}(R_i')$ in $\operatorname{Spec}(R_i)$ has the property that $J_iR_i'$ is the unit ideal.  Moreover, it
is generated by (finitely many) elements that are not in $\bigcup\limits_{j=1}^s P_jR_i$.
\end{enumerate}
\end{lem}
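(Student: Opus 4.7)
My plan is to prove (1) first, as the main substantial step, and deduce (2) and (3) from it.

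For (1), by Lemma~\ref{flat patches}(b), $\wh R'$ is flat over $\wh R$, so the going-down theorem applies. One direction is immediate: any minimal prime $Q' \subset \wh R'$ over $t\wh R'$ contracts (via going-down) to a minimal prime over $t\wh R$, hence to some $P_j$. The substantive converse is to show each $P_j\wh R'$ is itself prime and minimal over $t\wh R'$. I would reduce modulo $t$ and work geometrically: with $\tilde P_j = P_j R_1$, the image of $P_j$ in $R_1 = \mc O_{X_1}(U_1)$, it suffices to show $\tilde P_j R_1'$ is prime in $R_1' = \mc O_{X_1}(U_1')$. Now $\tilde P_j$ is the ideal of an integral closed subscheme $Y_j \subseteq U_1$ (integral because $\wh R/P_j$ is a domain, as $P_j$ is prime) whose underlying space is the $j$-th irreducible component $Z_j$ of $U$. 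Since $U'$ is dense in $U$, we have $Z_j \cap U' \ne \emptyset$, so the non-empty open subscheme $Y_j \cap U_1'$ of the integral scheme $Y_j$ is itself integral. Its coordinate ring is $R_1'/\tilde P_j R_1' \cong \wh R'/P_j\wh R'$, which is therefore a domain, proving $P_j\wh R'$ is prime. That $P_j\wh R'$ contracts to $P_j$ and is minimal over $t\wh R'$ follows from a short chain argument using going-down once more, and a cardinality count then gives the bijection matching $P_j\wh R' \leftrightarrow P_j$.

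For (2), the consequence of (1) we need is $P_j\wh R' \cap \wh R = P_j$. Given $r \in R_i$ whose image in $R_i'$ lies in $P_j R_i'$, I would lift $r$ to $\tilde r \in \wh R$. Then $\tilde r \in P_j\wh R' + t^i\wh R'$ inside $\wh R'$. Since $t \in P_j$, we have $t^i\wh R' \subseteq P_j\wh R'$, so $\tilde r \in P_j\wh R'$; contracting gives $\tilde r \in P_j$, and hence $r \in P_j R_i$.

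For (3a), the ideal $J_i$ defines the complement of the open immersion $\Spec(R_i') \hookrightarrow \Spec(R_i)$ (corresponding to $U_i' \subseteq U_i$), so extending it across this open immersion gives the ideal of the empty subscheme of $\Spec(R_i')$; hence $J_i R_i' = R_i'$. For (3b), I would first observe that $J_i \not\subseteq P_j R_i$ for any $j$: otherwise $Z_j = V(P_j R_i)$ would lie inside $V(J_i) = U_i \setminus U_i'$, contradicting density of $U'$ in $U$. By prime avoidance, $J_i \not\subseteq \bigcup_j P_j R_i$, so some $g \in J_i$ lies outside this union. Starting from a finite generating set of $J_i$ (finite since $R_i$ is Noetherian by Lemma~\ref{RU props}(a)), a standard prime-avoidance manipulation---replacing each generator lying in the union by a suitable $R_i$-combination with $g$---produces a finite generating set of $J_i$ with all generators outside $\bigcup_j P_j R_i$.

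The principal obstacle is in (1), specifically verifying that $P_j\wh R'$ is prime. The key geometric input is the density of $U'$ in $U$, which guarantees $Y_j \cap U_1' \ne \emptyset$, combined with the standard fact that non-empty open subschemes of integral schemes are integral. Once (1) is established, parts (2), (3a), and (3b) follow via relatively formal manipulations.
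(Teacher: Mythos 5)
Your proposal is correct and follows essentially the same route as the paper: part (1) via the irreducible components $Y_j$ of the closed fiber and the density of $U'$ (the paper is terser about why $P_j\wh R'$ is prime, but your ``nonempty open of an integral scheme is integral'' argument is exactly the missing justification), and part (3) via the same prime-avoidance modification of a finite generating set (the ``suitable combination'' is supplied by elements $\rho_j$ lying in every $P_k$ except $P_j$, as in the paper, or by Davis's prime avoidance for cosets). Your derivation of (2) from the contraction identity $P_j\wh R'\cap\wh R=P_j$ (via going-down) differs in presentation from the paper's, which instead uses injectivity of $\mc O(Y_j)\to\mc O(Y_j')$ for the dense open $Y_j'\subseteq Y_j$, but both rest on the same geometric fact and are equally valid.
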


\begin{proof}
The natural map $\wh R \to \wh R'$ is an inclusion, by Lemma~\ref{inclusion lemma}(c).
The irreducible components of the reduced closed fiber of $\Spec(\wh R)$ are the integral schemes $Y_j := \Spec(\wh R/P_j)$ for $j=1,\dots,s$.  The irreducible components of the reduced closed fiber of $\Spec(\wh R ')$  are the intersections $Y_j' = U' \cap Y_j \subseteq U'$, each of which is non-empty because $U'$ is dense in $U$.  Here $Y_j'$ is the closed subset of $\Spec(\wh R')$ defined by the ideal $P_j\wh R'$, for $j=1,\dots,s$.  So these are the minimal primes of $\wh R'$ over $t\wh R'$, showing (a).

Fix $j$.  Since $Y_j'$ is a dense open subset of the integral scheme $Y_j$, the natural map $\mc O_X(Y_j) \to \mc O_X(Y_j')$ is an inclusion of subrings of the function field of $Y_j$ (or equivalently of $Y_j'$).  But $\mc O_X(Y_j) = \wh R/P_j = (\wh R/t^i\wh R)/(P_j/t^i\wh R) = R_i/P_jR_i$, and similarly $\mc O_X(Y_j') = R_i'/P_jR_i'$.  So the map 
$R_i/P_jR_i \to R_i'/P_jR_i'$ is an inclusion, for all $i$.  Hence $\ker(R_i \to R_i'/P_jR_i')$, which is
the contraction of $P_jR_i'$ to $R_i$, is equal to
$\ker(R_i \to R_i/P_jR_i) = P_jR_i$;
showing (b).

The first part of~(\ref{associated-complement}) is immediate because $J_iR_i'$ defines the empty subscheme of $\Spec(R_i')$.
To prove the second part of~(\ref{associated-complement}), first choose any finite set of generators $\{a_1,\dots,a_d\} \subset R_i$ of~$J_i$.  We will modify the generating set so that none of its elements lie in $\bigcup\limits_{j=1}^s P_jR_i$.

Since the ideals $P_j$ are minimal over $t\wh R$, no $P_j$ contains any $P_k$ for $k \ne j$.  By \cite[Proposition~1.11(ii)]{AM}, $P_j$ does not contain $\bigcap_{k \ne j} P_k$; i.e., there exists $\rho_j \in \wh R$ such that 
$\rho_j$ is not contained in $P_j$ but is contained in every other $P_{k}$.  Hence its image $\bar \rho_j \in R_i$ is not contained in $P_jR_i$ (using that $t \in P_j$) but is contained in $P_{k}R_i$ for every other $k$. 

Since $U'$ is dense in $U$, the ideal $J_i$ is not contained in any of the ideals $P_jR_i$ (each of which defines an irreducible component of $U_i := \Spec(R_i)$ and hence of $U = U_i^{\rm red}$).  By prime avoidance (see \cite[Proposition~1.11(i)]{AM}),  
$J_i$ is not contained in $\bigcup_j P_jR_i$; i.e., there exists $r_0 \in J_i$ that is not in any $P_jR_i$.  For $h=1,\dots,d$, let $A_h = \{j\,|\,a_h \in P_jR_i\}$, and let $r_h = a_h + \sum_{j\in A_h} r_0\bar \rho_j$.  Then $r_h$ does not lie in any $P_jR_i$, and the ideal $J_i$ is generated by the $d+1$ elements $r_0,r_1,\dots,r_d$.  This proves the second part of~(\ref{associated-complement}). 
\end{proof}

\section{Modules on patches} \label{patches}

The previous section concerned rings of formal functions on patches.  
In this section, we study modules over these rings.  We build on the previous results to obtain Proposition~\ref{patch 2}, a key step in the proof of our theorem on the finiteness of pushforwards. 

\begin{lem} \label{module inj}   
Let $T$ be a complete local domain,
and let $\ms X$ be a normal integral $T$-scheme of finite type, with reduced closed fiber $X$.  Let $V \subseteq U$ be an inclusion of non-empty affine open subsets of $X$, with $V$ dense in $U$.
Then for every finitely generated torsion-free $\wh R_U$-module~$M$,   
the natural map $\iota_V:M \to M \otimes_{\wh R_U} \wh R_V$ is injective.  
\end{lem}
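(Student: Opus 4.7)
The plan is to reduce to the case where $U$ is connected and then exploit the injection of total rings of fractions provided by Lemma~\ref{inclusion lemma}(d), so that torsion-freeness of $M$ over the domain $\wh R_U$ can be pushed across into $\wh R_V$.

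First I would handle the reduction. The affine scheme $U$ has finitely many connected components $U_1,\dots,U_m$, each of which is clopen in $U$ and therefore affine. Since $V$ is dense in $U$, each $V_i := V \cap U_i$ is nonempty, clopen in $V$ (hence affine), and dense in $U_i$. By Lemma~\ref{RU props}(d), $\wh R_U \cong \prod_i \wh R_{U_i}$ and $\wh R_V \cong \prod_i \wh R_{V_i}$, and correspondingly $M \cong \bigoplus_i M_i$ where each $M_i$ is a finitely generated $\wh R_{U_i}$-module. A straightforward check using that the regular elements of $\prod_i \wh R_{U_i}$ include tuples whose $i$-th entry is any given regular element of $\wh R_{U_i}$ (and whose other entries are $1$) shows that each $M_i$ is torsion-free over $\wh R_{U_i}$. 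The map $\iota_V$ then becomes the direct sum of the maps $M_i \to M_i \otimes_{\wh R_{U_i}} \wh R_{V_i}$, so it suffices to treat the case when $U$ is connected.

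Now assume $U$ connected. Then $\wh R_U$ is a domain by Lemma~\ref{RU props}(c); let $K_U$ be its fraction field. Since $V$ is a nonempty dense open in the connected set $U$, Lemma~\ref{inclusion lemma}(c) gives that $\wh R_U \hookrightarrow \wh R_V$ is injective, and Lemma~\ref{inclusion lemma}(d) extends this to an injection $K_U \hookrightarrow K_V$, where $K_V$ is the total ring of fractions of $\wh R_V$. The composite $\wh R_U \to \wh R_V \to K_V$ thus factors through $K_U$, so there is a natural identification
\[
M \otimes_{\wh R_U} K_V \;\cong\; \bigl(M \otimes_{\wh R_U} K_U\bigr) \otimes_{K_U} K_V .
\]

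Finally, consider the composition $M \to M \otimes_{\wh R_U} \wh R_V \to M \otimes_{\wh R_U} K_V$, which I will show is injective; this immediately forces $\iota_V$ to be injective. Under the identification above, the composition factors as
\[
M \;\longrightarrow\; M \otimes_{\wh R_U} K_U \;\longrightarrow\; \bigl(M \otimes_{\wh R_U} K_U\bigr) \otimes_{K_U} K_V .
\]
The first arrow is injective because $M$ is torsion-free over the domain $\wh R_U$ (so $K_U$ is its field of fractions). The second arrow is injective because $K_V$ is a nonzero $K_U$-vector space, hence free, hence faithfully flat over the field $K_U$. This completes the argument.

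The only real content of the proof is the realization that one cannot work directly with the (merely flat) extension $\wh R_U \to \wh R_V$ — the flatness is not sufficient to guarantee that $M \to M \otimes_{\wh R_U} \wh R_V$ is injective for torsion-free $M$ — but must instead pass to total rings of fractions, so that the field-theoretic flatness of $K_V$ over $K_U$ can be invoked; this is exactly what Lemma~\ref{inclusion lemma}(d) is designed to supply. The connected-components reduction is essentially bookkeeping.
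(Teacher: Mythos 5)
Your proof is correct and follows essentially the same route as the paper's: the same reduction to connected $U$ via Lemma~\ref{RU props}(d), followed by the same factoring of $M \to M \otimes_{\wh R_U} \wh R_V \to M \otimes_{\wh R_U} K_V$ through $M \otimes_{\wh R_U} K_U$, using Lemma~\ref{inclusion lemma}(c,d) and flatness over the field $K_U$. No issues.
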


\begin{proof}
Let $U_1,\dots,U_s$ be the connected components of $U$. So $\wh R_U = \prod_i \wh R_{U_i}$ by
Lemma~\ref{RU props}(d); and each $\wh R_{U_i}$ is a Noetherian normal domain by Lemma~\ref{RU props}(a,c).
Since $M$ is a finitely generated torsion-free $\wh R_U$-module, it follows that $M = \prod_i M_i$, where $M_i$ is a finitely generated torsion-free $\wh R_{U_i}$-module for each $i$.
For every $i$, the intersection $V_i := V \cap U_i$ is an affine open dense subset of $U_i$; and $V$ is their disjoint union.  Thus $\wh R_V = \prod_i \wh R_{V_i}$, and $\iota_V$ decomposes as a product of maps $\iota_{V,i}:M_i \to M_i \otimes_{\wh R_{U_i}} \wh R_{V_i}$.  
So by considering each pair $\wh R_{U_i}$, $\wh R_{V_i}$, we are reduced to the case where $U$ is connected and $\wh R_U$ is a Noetherian domain.

Let $K_U$ be the fraction field of the domain $\wh R_U$, and let $K_V$ be the total ring of fractions of~$\wh R_V$.
Since $V$ is dense in $U$, we have a natural map $K_U \hookrightarrow K_V$ by Lemma~\ref{inclusion lemma}(c,d); so~$K_U$ and~$K_V$ are $\wh R_U$-modules, and $K_V$ is a $K_U$-vector space.
The composition $M \to M \otimes_{\wh R_U} \wh R_V \to M \otimes_{\wh R_U} \wh R_V \otimes_{\wh R_V} K_V = M \otimes_{\wh R_U} K_V$ also factors as 
$M \to M \otimes_{\wh R_U} K_U \to M \otimes_{\wh R_U} K_U \otimes_{K_U} K_V = M \otimes_{\wh R_U} K_V$.  Here the map $M \to M \otimes_{\wh R_U} K_U$ is injective because $M$ is torsion-free over $\wh R_U$; and the map $M \otimes_{\wh R_U} K_U \to M \otimes_{\wh R_U} K_U \otimes_{K_U} K_V$ is injective since $M \otimes_{\wh R_U} K_U$ is flat over the field $K_U$.  
So the composition of these maps is injective.  But the above two compositions are equal, hence the map $M \to M \otimes_{\wh R_U} \wh R_V$ is injective.
\end{proof}

\begin{lem}\label{diagram}  
Let $K$ be a complete discretely valued field with valuation ring~$T$ and uniformizer~$t$. 
Let $\ms X$ be a normal integral $T$-scheme of finite type,
and let $U$ be 
a non-empty affine open subset of the reduced closed fiber $X$ of $\ms X$. Consider 
an affine open subset $U'\subseteq U$ 
that is dense in $U$. Write $\wh R$ and $\wh R'$ for $\wh R_U$ and $\wh R_{U'}$, respectively.
Let $M$ be a finitely generated $\wh R$-module, let $M'=M\otimes_{\wh R}\wh R'$, and let $R_i, M_i, Q_i, N_i$ (resp., $R_i', M_i', Q_i', N_i'$) be the rings and modules given by Lemma~\ref{mnqlemma} for these two modules, with respect to the ideal~$t\wh R$ (resp., $t\wh R'$).
Then the natural map $R_i\rightarrow R_i'$ induces a commutative diagram
\[\begin{tikzcd}
0\arrow{r}&Q_i\otimes_{R_i}R_i'\arrow{r}\arrow{d}{\cong}
&M_i\otimes_{R_i}R_i'\arrow{d}{\cong}\arrow{r}
&N_i\otimes_{R_i}R_i'
\arrow{r}\arrow{d}{\cong}
&0\\
0\arrow{r}&Q_i'\arrow{r}
&M_i'\arrow{r}
&N_i'
\arrow{r}
&0
\end{tikzcd}
\]
with exact rows.
\end{lem}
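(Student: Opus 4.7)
The strategy is to verify the three vertical isomorphisms, from which exactness of both rows and commutativity follow by standard arguments. The middle isomorphism $M_i \otimes_{R_i} R_i' \cong M_i'$ is immediate from the tensor identities
\[ M_i \otimes_{R_i} R_i' = (M/t^iM) \otimes_{\wh R} \wh R' = (M \otimes_{\wh R} \wh R')/t^i(M \otimes_{\wh R} \wh R') = M'/t^iM' = M_i'. \]
Since $\wh R'$ is flat over $\wh R$ by Lemma~\ref{flat patches}(b), the base change $R_i' = R_i \otimes_{\wh R} \wh R'$ is flat over $R_i$. Tensoring the defining sequence $0 \to Q_i \to M_i \to N_i \to 0$ with $R_i'$ therefore yields the exact top row, while the bottom row is exact by definition of $N_i'$.

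Next I would check that the left vertical map is well-defined: for $q \in Q_i$, Lemma~\ref{mnqlemma}(a) furnishes an annihilator $r \in R_i \setminus \bigcup_j P_jR_i$, and by Lemma~\ref{associated}(\ref{associated-extension}) its image in $R_i'$ lies outside $\bigcup_j P_jR_i'$, placing the image of $q$ in $M_i'$ inside $Q_i'$. Injectivity of $Q_i \otimes_{R_i} R_i' \to Q_i'$ then follows from flatness combined with the middle isomorphism, and commutativity of the diagram is automatic from the naturality of the construction.

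The crux is surjectivity of the left vertical map, which amounts to the equality $Q_iR_i' = Q_i'$ of submodules of $M_i'$, where $Q_iR_i'$ denotes the image of $Q_i \otimes_{R_i} R_i'$. Using flatness I would identify $M_i'/Q_iR_i' = N_i \otimes_{R_i} R_i'$, so the task reduces to showing that if $q' \in Q_i'$ is killed by some $s' \in R_i' \setminus \bigcup_j P_jR_i'$, then its image in $N_i \otimes_{R_i} R_i'$ vanishes. For this I would prove the containment $\operatorname{Ass}_{R_i'}(N_i \otimes_{R_i} R_i') \subseteq \{P_jR_i'\}_{j=1}^s$, so that $s'$ acts as a non-zero-divisor. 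I would invoke the standard flat base change formula for associated primes
\[ \operatorname{Ass}_{R_i'}(N_i \otimes_{R_i} R_i') = \bigcup_{\mathfrak p \in \operatorname{Ass}_{R_i}(N_i)} \operatorname{Ass}_{R_i'}(R_i'/\mathfrak p R_i') \]
(see e.g.\ \cite[Lemma~05BU]{stacks}) together with the inputs that $\operatorname{Ass}_{R_i}(N_i) \subseteq \{P_jR_i\}$ (Lemma~\ref{mnqlemma}(b)), that each $P_j\wh R'$ is a prime ideal (Lemma~\ref{associated}(\ref{associated-patches})), and hence that $R_i'/P_jR_i' \cong \wh R'/P_j\wh R'$ is a domain whose only associated prime over $R_i'$ is $P_jR_i'$. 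With the left and middle vertical maps isomorphisms and both rows short exact, the right vertical map is an isomorphism by a short five-lemma diagram chase.

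The main obstacle I expect is this last step: controlling $\operatorname{Ass}_{R_i'}(N_i \otimes_{R_i} R_i')$ requires both the associated primes formula for flat base change and the structural compatibility between the minimal primes over $t\wh R$ and over $t\wh R'$ supplied by Lemma~\ref{associated}. Everything else reduces to routine tensor product manipulations and diagram chasing.
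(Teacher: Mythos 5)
Your proof is correct, and its skeleton --- the middle isomorphism via tensor identities, exactness of the top row from flatness of $R_i'$ over $R_i$, well-definedness and injectivity of the left vertical arrow via Lemma~\ref{mnqlemma}(a) and Lemma~\ref{associated}(\ref{associated-extension}), and the concluding five-lemma step --- matches the paper's. Where you genuinely diverge is the crux, the surjectivity of $Q_i\otimes_{R_i}R_i'\to Q_i'$. The paper argues in two stages: first for a basic open subset, where $R_i'=R_i[f^{-1}]$ and one clears denominators of $m'\in Q_i'$ and of its annihilator to produce a preimage in $Q_i$, using that $f$ avoids the $P_jR_i$; then in general by covering $\Spec(R_i')$ with basic opens $\Spec(R_i[f_h^{-1}])$ whose defining elements avoid the $P_jR_i$ (this is exactly what Lemma~\ref{associated}(\ref{associated-complement}) supplies) and descending along the faithfully flat map $R_i'\to\prod_h R_i[f_h^{-1}]$. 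You instead identify $M_i'/Q_iR_i'$ with $N_i\otimes_{R_i}R_i'$, bound $\operatorname{Ass}_{R_i'}(N_i\otimes_{R_i}R_i')$ by $\{P_jR_i'\}$ using the flat base change formula for associated primes together with Lemma~\ref{mnqlemma}(b) and the primality of $P_j\wh R'$ from Lemma~\ref{associated}(\ref{associated-patches}), and conclude that an annihilator $s'\notin\bigcup_jP_jR_i'$ of an element of $Q_i'$ acts as a non-zero-divisor on that quotient. This is shorter and more conceptual, and it bypasses Lemma~\ref{associated}(\ref{associated-complement}) entirely; the cost is the appeal to the base change theorem for $\operatorname{Ass}$, which is indeed valid here since $R_i\to R_i'$ is a flat map of Noetherian rings and $N_i$ is finite (Matsumura, \emph{Commutative Ring Theory}, Theorem~23.2 --- double-check your Stacks tag). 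In fact, since $R_i\to R_i'$ is an open immersion of affine schemes, the containment of associated primes you need can be seen even more directly: the associated points of $\widetilde{N_i}|_{\Spec(R_i')}$ are just those associated points of $\widetilde{N_i}$ that lie in $\Spec(R_i')$, and these correspond to the $P_jR_i'$ by Lemma~\ref{associated}.
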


\begin{proof}
First note that $\wh R$ and $\wh R'$ are quasi-excellent normal rings
by Lemma~\ref{RU props}(a), and in particular they are G-rings.  So Lemma~\ref{mnqlemma} does in fact provide us with the data $R_i, M_i, Q_i, N_i$ and $R_i', M_i', Q_i', N_i'$ as in the above assertion.
Moreover, $\wh R \hookrightarrow \wh R'$ by Lemma~\ref{inclusion lemma}(c).

For each $i$, $R_i = \wh R/t^i \wh R = \mc O(U_i)$
and $R_i' = \wh R'/t^i \wh R' = \mc O(U_i')$,
where $U_i, U_i'$ are the 
open subsets of the mod $t^i$ reduction $X_i$ of $\ms X$ that correspond to $U,U' \subseteq X$, respectively.
Since $U_i' = \Spec(R_i')$ is an open subset of $U_i = \Spec(R_i)$, the ring $R_i'$ is flat over $R_i$.
Thus the exact sequence 
\[0\rightarrow Q_i\rightarrow M_i\rightarrow N_i\rightarrow 0\] 
from Lemma~\ref{mnqlemma} yields an exact sequence
\[0\rightarrow Q_i\otimes_{R_i}R_i'\rightarrow M_i\otimes_{R_i}R_i'\rightarrow N_i\otimes_{R_i}R_i'\rightarrow 0\] 
as in the top row in the diagram above.  Similarly, the bottom row is exact by 
Lemma~\ref{mnqlemma}.
Using the definition of $M'$, we have isomorphisms 
\[M_i \otimes_{R_i} R_i' \iso M \otimes_{\wh R} R_i \otimes_{R_i} R_i' \iso M \otimes_{\wh R} R_i' \iso M \otimes_{\wh R} \wh R' \otimes_{\wh R'} R_i' \iso M' \otimes_{\wh R'} R_i' \iso M_i'.\]

Let $P_1,\ldots, P_s$ denote the minimal primes over the ideal~$(t)$ in $\wh R$.  Thus the minimal primes over $t \wh R'$ are the ideals $P_j \wh R'$, by Lemma~\ref{associated}(\ref{associated-patches}).  For any element $m \in M_i$ that lies in $Q_i$, the annihilator of $m$ in $R_i$ is not contained in $P_jR_i$ for any $j$, by definition of $Q_i$.  Thus by Lemma~\ref{associated}(\ref{associated-extension}), this annihilator is also not contained in $P_jR_i'$ for any $j$.
Hence the image of the inclusion $Q_i\otimes_{R_i}R_i'\rightarrow M_i\otimes_{R_i}R_i'$ is contained in $Q_i'$. This gives the left hand vertical arrow $Q_i\otimes_{R_i}R_i' \to Q_i'$, which is then injective; and it also gives the right hand vertical arrow $N_i\otimes_{R_i}R_i' \to N_i'$ such that the diagram commutes.  
We claim that the map $Q_i\otimes_{R_i}R_i'\rightarrow Q_i'$ is surjective, and hence an isomorphism. 
Since the middle vertical arrow is an isomorphism as observed above, this claim will imply that 
the right hand vertical arrow is an isomorphism,
and thus will finish the proof. 

We begin with the case in which $U'$ is a basic open subset of $U$; i.e., it is the complement of the zero set of some element $\bar f \in \mc O_X(U)$.  We may lift $\bar f$ to some $\til f \in \wh R_U = \mc O_{\mf X}(U)$.  Since $U'$ is dense in $U$, the element $\til f$ (and similarly, $\bar f$) does not vanish at the generic point of any irreducible component of $U$.  Fixing $i$, we write $f$ for the image of $\til f$ in $R_i$.  Thus $R_i' = R_i[f^{-1}]$, and 
for $j=1,\dots,s$ the element $f$ does not lie in the ideal $P_jR_i$.  

Let $m'\in Q_i'\subseteq M_i'=M_i\otimes_{R_i}R_i'$; we wish to show that $m'$ is in the image of the map $Q_i\otimes_{R_i}R_i'\rightarrow Q_i'$.  Since $m' \in Q_i'$, there exists 
$r'\in R_i'$ with $r'm'=0 \in M_i'$, such that $r'$ does not lie in any of the primes $P_jR_i'$.  For $j=1,\dots,s$, we have $f \not\in P_jR_i$;
and the image $f' \in R_i'$ of $f$ is a unit in $R_i'$, say with inverse denoted by $g$.

The homomorphism $R_i \to R_i' = R_i[f^{-1}] = S^{-1}R_i$ induces the homomorphism $M_i \to M_i' = M_i \otimes_{R_i} R_i' = S^{-1}M_i$, where $S \subset R_i$ is the multiplicative set generated by $f$.  
We may write $r' = r/f^a \in S^{-1}R_i$ and $m' = m/f^b \in S^{-1}M_i$, for some $r \in R_i$, some $m \in M_i$, and some $a,b\ge 0$.
Thus the image of $m \in M_i$ in $M_i'$ is $(f')^b m'$.
Since $f' \in R_i'$ is a unit and since $r'$ lies in no $P_jR_i'$,
the element $(f')^a r' = r/1 \in R_i'$ also lies in no $P_jR_i'$.
The element $r \in R_i$ maps to $r/1 = (f')^a r' \in R_i'$, hence it lies in no $P_jR_i \subset R_i$.

Now $r m/f^{a+b} = r'm'=0 \in M_i'$, and so by definition of localization we have $f^c r m=0 \in M_i$ for some $c\ge 0$.    
But $f^c r \in R_i$ lies in no $P_jR_i$, since this is true for the elements $f,r \in R_i$ and since $P_jR_i$ is prime.
Thus $m \in Q_i$,  
and $m \otimes g^b \in Q_i \otimes_{R_i} R_i' \subseteq M_i \otimes_{R_i} R_i'$,
where $g$ is as above.
The image of $m \otimes g^b$ in $Q_i' \subseteq M_i'$ is $(f')^bg^bm' = m'$, proving the claim in this case.  

For the general case, let $J_i \subset R_i$ be the ideal defining the complement of $\Spec(R_i')$ in $\Spec(R_i)$
as in Lemma~\ref{associated}(\ref{associated-complement}); and let $f_1,\ldots, f_d$ be generators of $J_i$ given by that lemma, with no $f_h$ lying in $P_jR_i$ for any $j$.  For $h=1,\dots,d$, the element $f_h$ vanishes along the complement of $\Spec(R_i')$ in $\Spec(R_i)$.
Thus $R_i \subseteq R_i' \subseteq R_{h,i} := R_i[f_h^{-1}]$, and so $R_{h,i} = R_i'[f_h^{-1}]$.  The ring $R_{h,i}$
is flat over $R_i'$, being a localization; hence the product ring $\prod_h R_{h,i}$ is also flat over $R_i'$.
Moreover $U_{h,i} :=\Spec(R_{h,i})$
is a basic open subset of $U_i = \Spec(R_i)$ that is contained in $U_i'=\Spec(R_i')$, and such that $\bigcup_h U_{h,i} = U_i'$.   Thus 
$\Spec(\prod_h R_{h,i})$, which is the disjoint union of the open sets $U_{h,i}$, maps surjectively to $U_i'=\Spec(R_i')$.  Hence 
$\prod_h R_{h,i}$ is faithfully flat over $R_i'$, by \cite[4D, Theorem~3]{Mats}.  

Let $M_{h,i} = M_i \otimes_{R_i} R_{h,i} = M_i' \otimes_{R_i'} R_{h,i}$, and let
$Q_{h,i}$ be the submodule of $M_{h,i}$ given as in Lemma~\ref{mnqlemma}.
By the above special case, the maps $Q_i \otimes_{R_i} R_{h,i} \to Q_{h,i}$ 
and $Q_i'\otimes_{R_i'} R_{h,i} \to Q_{h,i}$
are isomorphisms.  Let $Q_i''$ be the image of the injective map $Q_i \otimes_{R_i} R_i' \to Q_i'$.  Then 
$Q_i \otimes_{R_i} R_{h,i} = (Q_i \otimes_{R_i} R_i') \otimes_{R_i'} R_{h,i}$, and so the image of 
$Q_i'' \otimes_{R_i'} R_{h,i} \to Q_{h,i}$ is $Q_{h,i}$.  Thus for each $h$, the quotient $R_i'$-module $Q_i'/Q_i''$ becomes trivial upon tensoring with $R_{h,i}$.  So $Q_i'/Q_i''$ also becomes trivial upon tensoring with the faithfully flat $R_i'$-module $\prod_h R_{h,i}$.  Hence $Q_i'/Q_i''$ is already trivial; i.e., $Q_i'' = Q_i'$ and so the map $Q_i \otimes_{R_i} R_i' \to Q_i'$ is indeed surjective, as claimed.
\end{proof}

Recall that if $t$ is a regular element in a ring $R$, then an $R$-module $M$ is $t$-{\it torsion-free} if $rm \ne 0$ for all non-zero elements $m \in M$.

\begin{lem} \label{inj lem}
Let $T$ be a complete discrete valuation ring with uniformizer $t$, and let $\ms X$ be a normal integral $T$-scheme of finite type.  Let $U_0\subseteq U_1,U_2$ be affine open subsets of the reduced closed fiber $X$ of $\ms X$.
Let $M_0$ be a $t$-torsion-free $\wh R_{U_0}$-module, and for $e=1,2$ let $M_e \subseteq M_0$ be an $\wh R_{U_e}$-submodule.  Let $M = M_1 \cap M_2 \subseteq M_0$; and for every $i \ge 1$ write $M_{e,i} := M_e/t^iM_e$ for $e=0,1,2$ and  
$M_i' := M_{1,i} \times_{M_{0,i}} M_{2,i}$.  Then the natural map $M/t^iM \to  M_i'$ is injective. 
\end{lem}

\begin{proof}
Let $m \in M/t^iM$ lie in the kernel of $M/t^iM \to  M_i'$,
and pick a representative $\til m \in M$ for $m$.  
The image of $m$ in $M_{e,i} = M_e/t^iM_e$ is trivial, and this is the same as the image of $\til m$.  Hence for $e=1,2$ there exist $m'_e \in M_e$ such that $\til m = t^im'_e \in M_e$.  
Since $t^im'_1 = \til m = t^im'_2$ in $M_0$, the
element $m'_1 - m'_2 \in M_0$ is $t^i$-torsion.  But $M_0$ is $t$-torsion-free, and so $m'_1=m'_2 \in M_0$.  That is, $m'_1 \in M_1$ and $m'_2 \in M_2$
define a common element $m' \in M$, and $\til m = t^im' \in M$.  
So $\til m \in t^iM$, and $m \in M/t^iM$ is trivial.
\end{proof}

We now come to the main result of this section, which will provide the key step in proving finiteness of formal pushforwards.

\begin{prop} \label{patch 2} 
Let $T$ be a complete discrete valuation ring with uniformizer $t$, and let~$\ms X$ be a normal integral $T$-scheme of finite type.
Let $U_0,U_1,U_2,U$ be 
affine open subsets of the reduced closed fiber $X$ of $\ms X$, with $U_1,U_2 \subseteq U$ dense, and with $U_0 = U_1 \cap U_2$, such that the complement of $W := U_1 \cup U_2$ in $U$ has codimension at least two.  
Let $M_e$ be a finitely generated torsion-free $\wh R_{U_e}$-module for $e=0,1,2$.  For $e=1,2$, 
consider the natural map $\iota_e:M_e \to M_e \otimes_{\wh R_{U_e}} \wh R_{U_0}$, and
let $\alpha_e:M_e \otimes_{\wh R_{U_e}} \wh R_{U_0} \to M_0$ be an isomorphism.  Then 
$\alpha_e\iota_e$ is injective for $e=1,2$, and the 
intersection $M := \alpha_1\iota_1(M_1) \cap \alpha_2\iota_2(M_2) \subseteq M_0$ is a finitely generated torsion-free $\wh R_U$-module.
\end{prop}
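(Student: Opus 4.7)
The plan is to verify the three assertions---injectivity of $\alpha_e \iota_e$, torsion-freeness of $M$, and finite generation of $M$ over $\wh R_U$---separately; the first two are routine. Since $U_2$ is dense in $U$ and $U_1 \subseteq U$ is open, $U_0 = U_1 \cap U_2$ is dense in $U_1$, and symmetrically in $U_2$. Lemma~\ref{module inj} then gives that each $\iota_e$ (hence $\alpha_e \iota_e$) is injective. For torsion-freeness, $M_0 \cong M_e \otimes_{\wh R_{U_e}} \wh R_{U_0}$ is torsion-free over $\wh R_{U_0}$, while regular elements of $\wh R_U$ remain regular in $\wh R_{U_0}$ by Lemma~\ref{inclusion lemma}(b); so the submodule $M \subseteq M_0$ is torsion-free over $\wh R_U$.

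For finite generation, I would first reduce to the case that $U$ is connected, so that $\wh R_U$ is a normal Noetherian domain by Lemma~\ref{RU props}(a,c). For each $e \in \{0,1,2\}$, Lemma~\ref{mnqlemma} applied to $M_e$ over $\wh R_{U_e}$ yields inverse systems of $R_{e,i} := \wh R_{U_e}/t^i\wh R_{U_e}$-modules $\{Q_{e,i}\}, \{N_{e,i}\}$ with $\displaystyle \lim_\leftarrow N_{e,i} = M_e$ and $\displaystyle \lim_\leftarrow Q_{e,i} = 0$. Using the isomorphisms $\alpha_e$ together with Lemma~\ref{diagram} applied to the dense inclusions $U_0 \hookrightarrow U_e$ yields natural isomorphisms $N_{e,i} \otimes_{R_{e,i}} R_{0,i} \iso N_{0,i}$ for each $i$ and $e = 1,2$. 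Viewing each $N_{e,i}$ as a coherent sheaf on $\Spec R_{e,i}$, the sheaves $N_{1,i}$ and $N_{2,i}$ glue along the common restriction $N_{0,i}$ to a coherent sheaf $\mc N_i$ on $W_i := \Spec R_{1,i} \cup \Spec R_{2,i}$, regarded as an open subscheme of $U_i := \Spec R_{U,i}$ where $R_{U,i} := \wh R_U/t^i\wh R_U$.

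Next, I would extend $\mc N_i$ from $W_i$ to a coherent sheaf $\widetilde{\mc N}_i$ on $U_i$ via scheme-theoretic pushforward. By Lemma~\ref{mnqlemma}(b), every associated prime of $N_{e,i}$ is a minimal prime of $R_{e,i}$, so the associated points of $\mc N_i$ are generic points of irreducible components of $U_i$; together with the codimension at least two hypothesis for $U \setminus W$ in $U$ (inherited by $U_i \setminus W_i$ in $U_i$), this supplies the hypothesis of \cite[Lemma~0AWA]{stacks} and yields coherence of $\widetilde{\mc N}_i$ even though $U_i$ need not be reduced. The required Nagata and universally catenary conditions can be arranged by covering $U$ by affine opens of the form $V = \ms V \cap X$ where $\wh R_V$ is excellent (Lemma~\ref{RU props}(b)) and descending finite generation along the faithful flatness of Lemma~\ref{flat patches}(c). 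The inverse system $\{\widetilde{\mc N}_i\}$ then assembles into a coherent formal sheaf on $\Spf \wh R_U$, and identifying its global sections via the cartesian presentation of $M$ as $\ker(\alpha_1\iota_1-\alpha_2\iota_2 : M_1 \oplus M_2 \to M_0)$ gives
\[\displaystyle \lim_\leftarrow \Gamma(U_i,\widetilde{\mc N}_i) = \displaystyle \lim_\leftarrow \bigl(N_{1,i} \times_{N_{0,i}} N_{2,i}\bigr) = M_1 \times_{M_0} M_2 = M,\]
showing that $M$ is finitely generated over $\wh R_U$.

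The main obstacle is this extension step: pushing $\mc N_i$ forward across a codimension-two locus inside the non-reduced scheme $U_i$ while preserving coherence. This is precisely where the commutative algebra of Section~\ref{2 lemmas} pays off---the associated-prime control from Lemma~\ref{mnqlemma}(b) substitutes for the torsion-freeness hypothesis in Theorem~\ref{scheme pushforward} and makes the Stacks Project lemma applicable in the non-reduced setting. A secondary technical point, identifying the resulting limit with $M$, is handled by the Mittag--Leffler property of the $\{Q_{e,i}\}$ from Lemma~\ref{mnqlemma}(e), which ensures the limit of the defining short exact sequence remains exact.
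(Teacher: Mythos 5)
Your argument coincides with the paper's up to and including the application of \cite[Lemma~0AWA]{stacks}: the treatment of injectivity and torsion-freeness via Lemmas~\ref{module inj} and~\ref{inclusion lemma}(b), the use of Lemmas~\ref{mnqlemma} and~\ref{diagram} to glue the quotients $N_{e,i}$ into a coherent sheaf on $W_i$, and the pushforward step showing that $N'_i := N_{1,i}\times_{N_{0,i}}N_{2,i}$ is finite over $\wh R_U/(t^i)$ are all exactly as in the paper. (Two minor points: the paper needs no reduction to connected $U$, since Lemma~\ref{mnqlemma} already handles finite products of normal domains; and the excellence needed for \cite[Lemma~0AWA]{stacks} is immediate because $\wh R_U/(t^i)$ is of finite type over $T$, so no descent along Lemma~\ref{flat patches}(c) is required.)

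The gap is in your final step. You assert that the pushforwards $\widetilde{\ms N}_i$ ``assemble into a coherent formal sheaf on $\Spf \wh R_U$'' and deduce finiteness of $M$ from $\lim_\leftarrow N'_i = M$. Neither half of this is justified. A coherent formal module is an inverse system whose transition maps induce isomorphisms after the base change $\wh R_U/(t^{i+1})\to \wh R_U/(t^i)$, but pushforward does not commute with this non-flat base change, and the transition maps $N'_{i+1}\to N'_i$ need not even be surjective. More fundamentally, an inverse limit of finite $\wh R_U/(t^i)$-modules need not be finite over $\wh R_U$: over $k[[t]]$ the system $(k[[t]]/(t^i))^{\oplus i}$, with transition maps given by projection onto the first $i$ coordinates followed by reduction, has limit $k[[t]]^{\mbb N}$. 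So the identity $\lim_\leftarrow N'_i = M$ --- which is true, by Lemma~\ref{mnqlemma}(f) and the exactness of inverse limits against fiber products --- does not by itself yield the conclusion. What is needed, and what the paper proves, is that $M/t^iM$ is finitely generated for each $i$, after which \cite[Lemma~087W]{stacks} applies. The paper obtains this by showing that $M/t^iM$ injects into $M'_i := M_{1,i}\times_{M_{0,i}}M_{2,i}$ and is contained in the image of $M'_{i-1+n}\to M'_i$ for the uniform $n$ of Lemma~\ref{mnqlemma}(e); since that transition map kills $Q'_{i-1+n} := Q_{1,i-1+n}\times_{Q_{0,i-1+n}}Q_{2,i-1+n}$, its image is a quotient of $M'_{i-1+n}/Q'_{i-1+n}$, which embeds in the finitely generated module $N'_{i-1+n}$. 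Your proof needs this (or an equivalent) argument to close.
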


\begin{proof}
For short, write $\wh R_e = \wh R_{U_e}$ 
for $e=0,1,2$.  Since $U_1,U_2$ are each dense in $U$, the intersection $U_0 = U_1 \cap U_2$ is dense in $U_1,U_2$.  So we may apply
Lemma~\ref{module inj} and obtain that each $\iota_e$ is injective. Since $\alpha_e$ is an isomorphism, the composition 
$\alpha_e\iota_e$ is injective.  Because of this injectivity, we may
identify $M_e$ with its image under $\alpha_e \iota_e:M_e \to M_0$ for $e=1,2$, and thus regard $M_e$ as contained in $M_0$.  
Here $\wh R_U, M$ are respectively contained in $\wh R_e, M_e$, and every regular element of $\wh R_U$ is regular over $\wh R_e$ by Lemma~\ref{inclusion lemma}(b).
Thus $M$ is torsion-free over $\wh R_U$, since $M_e$ is torsion-free over $\wh R_e$.

With respect to the above identifications, the goal of the proof 
is then to show that $M:=M_1 \cap M_2$ is finitely generated over $\wh R_U$.
By \cite[Lemma~087W]{stacks}, 
it suffices to show that $M/t^iM$ is a finitely generated $\wh R_U/(t^i)$-module for all $i$. 

For $e=0,1,2$ and $i \ge 1$, write $R_{e,i} = \wh R_e/t^i \wh R_e$.
The irreducible components of the reduced closed fiber of $\Spec(\wh R_U)$ are $\Spec(\wh R_U/P_j)$ for $j=1,\dots,s$, where $P_1,\dots,P_s$ are the minimal primes over $t\wh R_U$.  For $e=0,1,2$, 
the minimal primes over $t\wh R_e$ are the ideals $P_j\wh R_e$ for $j=1,\dots,s$, by Lemma~\ref{associated}(a).
By Lemma~\ref{RU props}(a), 
each $\wh R_e$ is a quasi-excellent $t$-adically complete normal ring, and hence a G-ring.  So
Lemma~\ref{mnqlemma} applies, with $\wh R_e, M_e, P_j\wh R_e$ playing the roles of $R,M,P_j$ there.  Let $M_{e,i}, Q_{e,i}, N_{e,i}$ be the modules given in Lemma~\ref{mnqlemma} in that situation.  Thus $M_{e,i}$ and its quotient $N_{e,i}$ are finitely generated modules over $R_{e,i}$ and over $\wh R_e$, and 
$\displaystyle \lim_\leftarrow M_{e,i} = \lim_\leftarrow N_{e,i} = M_e$, for $e=1,2$.  
Also by that lemma, for $e=0,1,2$ and $i \ge 1$, the 
associated primes of $N_{e,i}$ are among
$P_1R_{e,i},\dots,P_sR_{e,i}$.  Here the support of $P_jR_{e,i}$ is dense in the corresponding irreducible component of $\Spec(\wh R_U/(t^i))$.

Since $U_0$ is dense in $U_e$ for $e=1,2$, 
we may apply Lemma~\ref{diagram} to $U_0\subseteq U_e$, and obtain 
isomorphisms of finite modules $N_{e,i} \otimes_{R_{e,i}} R_{0,i} = N_{0,i}$.
By \cite[Lemma~00AM]{stacks}, 
these modules and isomorphisms 
define a coherent sheaf $\ms N_i$ of $\mc O_{W_i}$-modules on $W_i$, where we write $W_i := \Spec(R_{1,i}) \cup \Spec(R_{2,i}) \subseteq \Spec(\wh R_{U}/(t^i))$.  Since the complement of $W$ in $U$ has codimension at least two, the same holds for the complement of $W_i$ in $\Spec(\wh R_{U}/(t^i))$.  Thus each point $z$ of that latter complement has codimension at least two in each irreducible component of $\Spec(\wh R_{U}/(t^i))$ on which it lies, and in particular in the closed subset defined by any of the associated primes of $N_{e,i}$ (each of which is of the form $P_jR_{e,i}$, corresponding to one of these irreducible components).  Since $\wh R_U/(t^i)$ is of finite type over $T$, it is excellent.  So 
\cite[Lemma~0AWA]{stacks} 
(or equivalently, \cite[Partie 2, Corollaire~5.11.4(ii)]{EGA4}) 
applies and shows that $(f_i)_*\ms N_i$ is coherent over $\Spec(\wh R_U/(t^i))$, where $f_i:W_i \to \Spec(\wh R_U/(t^i))$ is the natural inclusion.  
Its module of global sections, which is $N'_i := N_{1,i} \times_{N_{0,i}} N_{2,i}$, is thus finite over $\wh R_U/(t^i)$.  

For every $i \ge 1$, let $M_i' = M_{1,i} \times_{M_{0,i}} M_{2,i}$.  
For every $i \ge 1$, the maps $M=M_1\cap M_2 \to M_e \to M_{e,i} = M_e/t^iM_e$ for $e=0,1,2$ together induce a map $M \to M_i'$ that descends to a map $M/t^iM \to  M_i'$.  
By Lemma~\ref{inj lem}, this latter map is injective.

Say $h\ge i\ge 1$ is an integer.  Then the mod $t^i$ reduction maps $M_{e,h} \to M_{e,i}$, for $e=0,1,2$, together define a map $M_{h}' \to M_i'$.  With respect to the injections $M/t^{h} M \to  M_{h}'$ and $M/t^iM \to  M_i'$, this restricts to the surjection $M/t^{h} M \to M/t^iM$ given by reduction modulo~$t^i$.  Hence the image of $M_h' \to M_i'$ contains $M/t^iM$, viewed as a submodule of $M_i'$.

For every $i \ge 1$, write $Q_i' = Q_{1,i} \times_{Q_{0,i}} Q_{2,i}$.  For $e=0,1,2$, we have a short exact sequence $0 \to Q_{e,i} \to M_{e,i} \to N_{e,i} \to 0$.  Since taking fiber products is left exact, we obtain a left exact sequence $0 \to Q'_i \to M'_i \to N'_i$ for each $i$, where as above $N'_i = N_{1,i} \times_{N_{0,i}} N_{2,i}$.  Thus $N_i := M'_i/Q'_i$ is a submodule of the finitely generated $\wh R_U/(t^i)$-module $N_i'$; 
and so $N_i$ is also finitely generated over $\wh R_U/(t^i)$, since $\wh R_U/(t^i)$ is Noetherian.

To conclude the proof, we use the above to show that $M/t^iM$ is a finitely generated $\wh R_U/(t^i)$-module for all $i$. 
For $e=0,1,2$, let  $n_e$ be the integer given in Lemma~\ref{mnqlemma}(e) for the modules $\{Q_{e,i}\}$.  Let $n = \max(n_0,n_1,n_2)$.  
Thus $Q_{e,i-1+n} \to Q_{e,i}$ is trivial for $e=0,1,2$, and so the map $M_{e,i-1+n} \to M_{e,i}$ restricts to the trivial map on $Q_{e,i-1+n}$.  Hence the restriction of $M'_{i-1+n} \to M'_i$ to $Q'_{i-1+n}$ is also trivial.  Thus the map $M'_{i-1+n} \to M'_i$ induces a map $N_{i-1+n} \to M'_i$ that has the same image.  This image is finitely generated because $N_{i-1+n}$ is.  But as noted above (taking $h=i-1+n$), the image of $M'_{i-1+n} \to M'_i$ contains $M/t^iM$.  Thus 
$M/t^iM$ is indeed finitely generated over $\wh R_U/(t^i)$, completing the proof.
\end{proof}

\section{Formal pushforwards} \label{pushforwards}

Recall that if $(Z,\mc O_Z)$ is any ringed space, and $M$ is a module over $R:=\Gamma(Z,\mc O_Z)$, then there is a functorially associated quasi-coherent sheaf $\mc F_M$ on $Z$ whose presentation is induced by that of $M$.  That is, if we choose a presentation $\bigoplus_J R \to \bigoplus_I R \to M \to 0$, then $\mc F_M$ is given by the corresponding presentation $\bigoplus_J \mc O_Z \to \bigoplus_I \mc O_Z \to \mc F_M \to 0$;
see \cite[Lemma~01BH, Definition~01BI]{stacks}.  
In the case of a Noetherian affine formal scheme $\mf X = \Spf(A)$ and a finite $A$-module $M$, the sheaf $\mc F_M$ on $\fX$ is the formal sheaf $M^\Delta$ associated to the coherent sheaf of modules $\til M$ on the scheme $\Spec(A)$; see \cite[Section~10.10.1]{EGA1}.
This sheaf $M^\Delta$ is coherent as an $\mc O_\fX$-module and it satisfies $\Gamma(\fX,M^\Delta)=M$,
by \cite[Propositions~10.10.5, 10.10.2(i)]{EGA1}.  
Moreover, every coherent $\mc O_\fX$-module is uniquely of the form $M^\Delta$, by \cite[Proposition~10.10.5]{EGA1}.    

Consider a normal integral scheme $\ms X$ of finite type over a complete discrete valuation ring~$T$, with reduced closed fiber $X$, and let $V \subseteq U$ be open subsets of $X$.  Since $X \subseteq \ms X$ has the subspace topology, there exist (not necessarily affine) open subsets $\ms V \subseteq \ms U$ of $\ms X$ meeting $X$ at $V,U$.  The inclusion map $g: \ms V \hookrightarrow \ms U$ restricts to the inclusion $g: V \hookrightarrow U$; and it also pulls back to compatible inclusions $g_n: V_n \hookrightarrow U_n$ on the reductions of $\ms V, \ms U$ modulo $(t^n)$ for all $n \ge 1$.  As in \cite[10.9.1]{EGA1}, the morphisms $g_n$ together yield a morphism $\wh g: \mf V \to \mf U$ between the induced formal schemes $\mf V = \ms V_{/V}$ and $\mf U = \ms U_{/U}$.  Note that $g_n$ and hence $\wh g$ are independent of the choice of $\ms V$ and $\ms U$, and depend just on the inclusion $g: V \hookrightarrow U$ (and on the $T$-scheme~$\ms X$).

\begin{prop} \label{pushforward torsion-free}
Let $T$ be a complete discrete valuation ring with uniformizer $t$, and let $\ms X$ be a normal integral $T$-scheme of finite type,
with reduced closed fiber $X$ and formal completion $\mf X$.  Let $g:V \to U$ be an inclusion of open subsets of $X$, with inclusion $\wh g:\mf V \to \mf U$ of the associated formal open subschemes of $\mf X$.  
\begin{enumerate} [(a)]
\item 
If $\ms G$ is a coherent (resp.\ torsion-free) $\mc O_{\mf U}$-module, 
then $\wh g^*\ms G$ has the same property on $\mf V$.
\item 
If $U$ and $V$ are affine, 
and if\, $\ms G$ is a coherent $\mc O_{\mf U}$-module,
then there is a natural isomorphism $\ms G(V) \iso \ms G(U) \otimes_{\wh R_U} \wh R_V$. 
\item
If $\ms F$ is a torsion-free $\mc O_{\mf V}$-module, then $\wh g_*\ms F$ is a torsion-free $\mc O_{\mf U}$-module.
\end{enumerate}
\end{prop}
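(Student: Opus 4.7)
The plan is to address the three parts sequentially, exploiting that $\wh g$ realizes an open immersion of underlying topological spaces, so that $\wh g^*$ acts as restriction while $\wh g_*$ is extension via sections over preimages.

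For part (a), the pullback $\wh g^*(\ms M)$ coincides with the restriction $\ms M|_{\mf V}$, since the structure sheaves of $\mf U$ and $\mf V$ agree on the common topological subspace $V \subseteq U$. Coherence is local and hence preserved. For torsion-freeness, recall that this property is defined (as made explicit in the discussion preceding Proposition~\ref{qcoh push}) via the sections on affine opens being torsion-free modules. Any affine open $W$ of $V$ is simultaneously an affine open of $U$, and $\mc O_{\mf V}(W) = \mc O_{\mf U}(W) = \wh R_W$; the sections $(\wh g^*\ms M)(W) = \ms M(W)$ are therefore torsion-free over $\wh R_W$ by the hypothesis on $\ms M$.

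For part (b), I would invoke the equivalence between coherent $\mc O_{\mf U}$-modules and finite $\wh R_U$-modules given by $M \mapsto M^\Delta$ (\cite[Section~10.10]{EGA1}). Setting $M = \ms M(U)$, choose a presentation $\wh R_U^{\,m} \to \wh R_U^{\,n} \to M \to 0$. Applying $\Delta$ produces a right-exact sequence $\mc O_{\mf U}^{\,m} \to \mc O_{\mf U}^{\,n} \to \ms M \to 0$ of coherent sheaves on $\mf U$. Restricting sheaves to the open $\mf V$ is exact, and the global-sections functor on coherent sheaves over an affine Noetherian formal scheme is also exact (by the vanishing of higher cohomology results in \cite[Section~10.10]{EGA1}). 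The resulting right-exact sequence on $\mf V$ has the form $\wh R_V^{\,m} \to \wh R_V^{\,n} \to \ms M(V) \to 0$ with the same transition maps as the sequence obtained by tensoring the original presentation with $\wh R_V$ over $\wh R_U$. Comparing cokernels yields the canonical isomorphism $\ms M(V) \iso M \otimes_{\wh R_U} \wh R_V$.

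For part (c), fix an affine open $W \subseteq U$ and consider $\wh g_*(\ms N)(W) = \ms N(V \cap W)$. Cover $V \cap W$ by affine open subsets $V_\alpha$ of $X$ with $V_\alpha \subseteq V \cap W$, which exists because each point of $X$ has such a neighborhood basis. Each $V_\alpha$ is simultaneously an affine open of the topological space $V$ and of $W$. The separatedness part of the sheaf axiom for $\ms N$ gives an injection $\ms N(V \cap W) \hookrightarrow \prod_\alpha \ms N(V_\alpha)$, and each $\ms N(V_\alpha)$ is torsion-free over $\wh R_{V_\alpha}$ by hypothesis on $\ms N$. Given a regular element $r \in \wh R_W$ and $m \in \ms N(V \cap W)$ with $rm = 0$, Lemma~\ref{inclusion lemma}(b) applied to each inclusion $V_\alpha \subseteq W$ guarantees that the image of $r$ in $\wh R_{V_\alpha}$ is regular. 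Torsion-freeness of $\ms N(V_\alpha)$ then forces the image of $m$ there to vanish for every $\alpha$, and the injectivity from the sheaf axiom forces $m = 0$, as desired.

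I expect part (b) to be the step requiring the most care, since it relies on citing the right EGA facts about exactness of the $\Delta$-functor and the vanishing of higher cohomology on affine Noetherian formal schemes. Parts (a) and (c) are essentially formal consequences of the sheaf-theoretic definitions combined with Lemma~\ref{inclusion lemma}(b).
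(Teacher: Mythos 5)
Your proposal is correct. Parts (a) and (c) match the paper's argument: (a) is the observation that both properties are local, and your direct argument for (c) --- restrict a torsion relation $rm=0$ to an affine cover of $V\cap W$, use Lemma~\ref{inclusion lemma}(b) to see $r$ stays regular, and conclude $m=0$ by the sheaf axiom --- is just the contrapositive-free version of what the paper does (the paper instead picks a single affine $V'\subseteq U'\cap V$ where a hypothetical torsion element survives). The only genuine divergence is in (b): the paper simply cites \cite[Proposition~10.10.8]{EGA1}, which asserts the canonical isomorphism $\wh g^*(M^\Delta)\cong (M\otimes_R S)^\Delta$, whereas you re-derive that compatibility by choosing a presentation $\wh R_U^{\,m}\to\wh R_U^{\,n}\to M\to 0$, using exactness of restriction to an open and exactness of $\Gamma$ on coherent sheaves over a Noetherian affine formal scheme (both available from \cite[Section~10.10]{EGA1}), and comparing cokernels. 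Your route is more self-contained but is essentially the proof of the cited EGA proposition in the open-immersion case; the one small point to make explicit is that the isomorphism you obtain by comparing cokernels agrees with the canonical map $\ms M(U)\otimes_{\wh R_U}\wh R_V\to\ms M(V)$ induced by restriction, which is immediate since that map is compatible with the presentations. Either way the proposition stands.
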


\begin{proof}
Part (a) is immediate from the fact that the properties of being coherent and torsion-free are each local.

For part~(b), write $\mf U = \Spf(R)$ and $\mf V = \Spf(S)$.
Then $\ms G(\mf U)$ is the unique finitely generated $R$-module $M$
such that $\ms G = M^\Delta$.  By \cite[Proposition~10.10.8]{EGA1}, there is a canonical isomorphism $\wh g^*\ms G = \wh g^*(M^\Delta) \iso (M \otimes_R S)^\Delta$.  Hence the composition $\ms G(\mf V) = \wh g^*\ms G(\mf V) \to (M \otimes_R S)^\Delta(\mf V) = M \otimes_R S = \ms G(\mf U) \otimes_{\mc O_{\mf X}(\mf U)} \mc O_{\mf X}(\mf V)$ defines an isomorphism $\ms G(V) \iso \ms G(U) \otimes_{\wh R_U} \wh R_V$, where as before we identify the underlying spaces of $\mf U, \mf V$ with those of $U,V$.

For part (c), we prove the contrapositive.  If $\wh g_*\ms F$ is not torsion-free, then there is an affine open subset $U' \subseteq U$ such that $\wh g_*\ms F(U')$ has torsion as a module over $\mc O_{\mf U}(U') = \wh R_{U'}$.  That is, there exists a non-zero element $m \in \wh g_*\ms F(U') = \ms F(U' \cap V)$ and a regular element $r \in \wh R_{U'}$ such that $rm=0 \in \ms F(U' \cap V)$.  Since $m\ne 0$, there is an affine open subset $V' \subseteq U' \cap V$ such that the image $m' \in \ms F(V')$ of $m$ is non-zero.  Let $r' \in \wh R_{V'}$ be the image of $r \in \wh R_{U'}$.
Thus $r'm'=0 \in \ms F(V')$ since $rm=0$; and $r'$ is regular in $\wh R_{V'}$ by Lemma~\ref{inclusion lemma}(b) applied to the inclusion $V' \subseteq U'$ of affine open sets.  Hence $\ms F(V')$ is not a torsion-free module over $\wh R_{V'}=\mc O_{\mf V}(V')$, and so 
$\ms F$ is not a torsion-free $\mc O_{\mf V}$-module.
\end{proof}

If $\mf U$ is a formal scheme, then a sheaf $\ms M$ of $\mc O_{\mf U}$-modules will be called {\it finite} if $\ms M(\mf V)$ is a finite $\mc O_{\mf U}(\mf V)$-module for every affine open subset $\mf V \subseteq \mf U$.  We make the same definition for sheaves of $\mc O_X$-modules on a scheme $X$.

Although Proposition~\ref{pushforward torsion-free}(a) holds both for the properties of being torsion-free and coherent, Proposition~\ref{pushforward torsion-free}(c) does not carry over in general to the coherent property, nor to the weaker property of being finite (e.g., if $U$ is affine and $V \subset U$ is the complement of a principal divisor).
But as we show in Theorem~\ref{genl fin ext} below, finiteness
is preserved under pushforward if the complement of $V$ in $U$ has codimension at least two.  First, we obtain the following special case of Theorem~\ref{genl fin ext}.

\begin{lem} \label{2 open coherent}  
Let $\ms X,U_0,U_1,U_2,U,W$ be as in Proposition~\ref{patch 2}, 
let $\mf X$ be the formal completion of $\ms X$,
and let $\wh g:\mf W \to \mf U$ be the inclusion of the formal open subschemes of $\mf X$ that are associated to $W, U$.  Let $\ms N$ be a torsion-free coherent sheaf on the formal scheme $\mf W$ that is associated to $W$.  Then $\wh g_*\ms N$ is a finite torsion-free 
sheaf on $\mf U$.
\end{lem}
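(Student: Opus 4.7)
The torsion-freeness of $\wh g_*(\ms N)$ is immediate from Proposition~\ref{pushforward torsion-free}(c), so the heart of the proof is establishing coherence on the affine formal scheme $\mf U = \Spf(\wh R_U)$. My plan is to realize $\wh g_*(\ms N)$ as the coherent sheaf $M^\Delta$ associated to a finitely generated $\wh R_U$-module $M$ arising as the module of global sections.

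Since $\mf U_1$ and $\mf U_2$ are affine open formal subschemes of $\mf W$ with affine intersection $\mf U_0$, the coherence of $\ms N$ supplies finitely generated torsion-free $\wh R_{U_e}$-modules $M_e := \ms N(\mf U_e)$ for $e=0,1,2$, together with natural isomorphisms $\alpha_e \colon M_e \otimes_{\wh R_{U_e}} \wh R_{U_0} \iso M_0$ for $e=1,2$ by Proposition~\ref{pushforward torsion-free}(b). The sheaf property of $\ms N$ on the cover $\mf W = \mf U_1 \cup \mf U_2$ yields
\[
\wh g_*(\ms N)(\mf U) \;=\; \ms N(\mf W) \;=\; M_1 \times_{M_0} M_2.
\]
Identifying $M_e$ with its image in $M_0$ under $\alpha_e \iota_e$, which is injective by Lemma~\ref{module inj}, this fiber product becomes the intersection $M := M_1 \cap M_2 \subseteq M_0$, and Proposition~\ref{patch 2} guarantees that $M$ is a finitely generated torsion-free $\wh R_U$-module.

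Next, let $M^\Delta$ denote the coherent sheaf on $\mf U$ associated to $M$, and construct the natural morphism $\phi \colon M^\Delta \to \wh g_*(\ms N)$ corresponding on global sections to $\id_M$. Restricted to $\mf U_e$ for $e=1,2$, the map $\phi|_{\mf U_e}$ is the sheaf-theoretic incarnation of the base-change homomorphism $M \otimes_{\wh R_U} \wh R_{U_e} \to M_e$ (using Proposition~\ref{pushforward torsion-free}(b) on the source and the sheaf property on the target). To verify this is an isomorphism, I would tensor the left-exact sequence $0 \to M \to M_1 \oplus M_2 \to M_0$ with the flat $\wh R_U$-algebra $\wh R_{U_e}$ (Lemma~\ref{flat patches}(b)) and compare with the corresponding left-exact sequence of patching data restricted to $\mf U_e$, exploiting that, on finitely generated modules over these Noetherian complete rings, $\wh R_{U_0}$ realizes the base change of $\wh R_{U_{3-e}}$ from $\wh R_U$ to $\wh R_{U_e}$. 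Once $\phi$ is known to be an isomorphism on $\mf W$, the kernel $\ker(\phi)$, being a subsheaf of the torsion-free coherent $M^\Delta$ supported on the codimension-two set $\mf U \setminus \mf W$, must vanish; and the coincidence of global sections together with quasi-coherence of both sides (Proposition~\ref{qcoh push}) will force $\phi$ to be surjective as well.

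The principal obstacle is establishing $M \otimes_{\wh R_U} \wh R_{U_e} \iso M_e$, the sheaf-theoretic reflection of the intersection $M_1 \cap M_2$ recovering each patch after base change. This is precisely where the intersection-theoretic output of Proposition~\ref{patch 2} has to be translated into a concrete module identification on each patch, and where the codimension-two hypothesis on $U \setminus W$ does its substantive work.
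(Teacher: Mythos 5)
Your identification of the global sections ($\wh g_*(\ms N)(\mf U)=\ms N(\mf W)=M_1\cap M_2=:M$, finitely generated and torsion-free by Lemma~\ref{module inj} and Proposition~\ref{patch 2}) and your use of Proposition~\ref{pushforward torsion-free}(c) for torsion-freeness are fine, but the step you yourself flag as the ``principal obstacle'' is a genuine gap, not a detail: the isomorphism $M\otimes_{\wh R_U}\wh R_{U_e}\iso M_e$ is exactly Proposition~\ref{2 open induce} of the paper, and there it is deduced \emph{from} Lemma~\ref{2 open coherent}: one first needs to know $\wh g_*(\ms N)$ is coherent before Proposition~\ref{pushforward torsion-free}(b) (a statement about coherent sheaves) can be applied to it. Your proposed substitute---tensor $0\to M\to M_1\oplus M_2\to M_0$ with the flat ring $\wh R_{U_e}$ and use that ``$\wh R_{U_0}$ realizes the base change of $\wh R_{U_{3-e}}$ from $\wh R_U$ to $\wh R_{U_e}$''---rests on an unproved and dubious claim: $\wh R_{U_0}$ is only the $t$-adic \emph{completion} of $\wh R_{U_1}\otimes_{\wh R_U}\wh R_{U_2}$ (the identification $R_{1,n}\otimes_{R_n}R_{2,n}\cong R_{0,n}$ holds modulo each $t^n$, not on the nose), and flatness of $\wh R_{U_e}$ over $\wh R_U$ (Lemma~\ref{flat patches}(b)) does not let you replace $M_{3-e}\otimes_{\wh R_U}\wh R_{U_e}$ by $M_{3-e}\otimes_{\wh R_{U_{3-e}}}\wh R_{U_0}=M_0$ for a finitely generated module. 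Your final step is also not valid as stated: for quasi-coherent sheaves on formal schemes, equality of global sections plus injectivity does not force $\phi:M^\Delta\to\wh g_*(\ms N)$ to be surjective; surjectivity over a smaller affine open $V\subsetneq U$ says every section of $\ms N$ over $V\cap W$ comes from $M\otimes_{\wh R_U}\wh R_V$, which is essentially the local form of the very statement being proved.

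The paper avoids all of this by never comparing $M$ with the patches globally. It uses Proposition~\ref{qcoh push} to see that $\ms M:=\wh g_*(\ms N)$ is quasi-coherent, hence by \cite[Lemma~01BK]{stacks} locally the sheaf associated to a module; then, for each point of $U$, it chooses an affine open neighborhood $V\subseteq U$, identifies $\ms M(V)=\ms N(V\cap U_1)\cap\ms N(V\cap U_2)$ inside $\ms N(V\cap U_0)$ via Lemma~\ref{module inj}, and applies Proposition~\ref{patch 2} to the quadruple $V\cap U_0,\,V\cap U_1,\,V\cap U_2,\,V$---whose density and codimension-two hypotheses are inherited from those of $U_0,U_1,U_2,U$---to conclude that this module is finite over $\wh R_V$, so that $\ms M|_{\mf V}$ is coherent by \cite[Proposition~10.10.5]{EGA1}. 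In other words, the intersection-finiteness argument has to be run at \emph{every} affine open $V\subseteq U$, not only at $V=U$; running it only globally and then trying to propagate by base change requires precisely the module identification that is unavailable before coherence is known. If you rework your argument to localize first in this way, the need for $M\otimes_{\wh R_U}\wh R_{U_e}\iso M_e$ disappears entirely.
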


\begin{proof}
Let $V$ be an affine open subset of $U$.
For $e=0,1,2$, let $V_e = V \cap U_e$.  Then 
$\ms N(V_e) \subseteq \ms N(V_0)$ for $e=1,2$ by Lemma~\ref{module inj} and Proposition~\ref{pushforward torsion-free}(b); 
and we have $\wh g_*\ms N(V) = 
\ms N(V \cap W) = \ms N(V_1) \cap \ms N(V_2)$.  
Since $\ms N$ is coherent and since $V_e$ is affine, $\ms N(V_e)$ is a finite module over $\mc O_\fU(V_e) = \wh R_{V_e}$.
Now $V_1,V_2$ are dense in $V$, since $U_1,U_2$ are assumed dense in $U$.  So we may apply Proposition~\ref{patch 2} to $V_0,V_1,V_2,V$, and conclude that $\wh g_*\ms N(V)$ is a finitely generated torsion-free module over 
$\wh R_V = \mc O_{\mf V}(V)$.  So the assertion follows.
\end{proof}

In order to prove our more general finiteness result about pushforwards (Theorem~\ref{genl fin ext}), we will need to find two affine open subsets $U_1,U_2$ such that the union of the complement has codimension at least two.  For this we state the following lemma, which we will apply to the closed fiber of a formal scheme.

\begin{lem} \label{exist 2 conn opens}
Let $V$ be a quasi-projective variety over a field.  
\begin{enumerate} [(a)]
\item There are affine dense open subsets $U_0,U_1,U_2 \subseteq V$ with $U_0 = U_1 \cap U_2$, such that the complement of $U_1 \cup U_2$ in $V$ has codimension at least two.
\item If $V$ is connected, then we may choose $U_0,U_1,U_2$ in (a) such that for every connected open subset $O \subseteq V$ the intersection $O \cap U_e$ is connected for $e=0,1,2$.  In particular, $U_0,U_1,U_2$ are connected in this case.
\end{enumerate}
\end{lem}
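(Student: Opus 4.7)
The plan is to construct $U_1, U_2$ as the complements in $V$ of two very ample divisors on a projective closure $\bar V$ of $V$, chosen in sufficiently general position, so that $U_0 = U_1 \cap U_2$ is automatically affine as the intersection of two affine opens in the separated scheme $\bar V$. Embed $V$ into its projective closure $\bar V$, so that each irreducible component of $\bar V$ meets $V$, set $Z = \bar V \setminus V$, and fix a very ample invertible sheaf $\mathcal{L}$ on $\bar V$. For $n$ sufficiently large, Serre vanishing ensures that $\mathcal{L}^n \otimes \mathcal{I}_Z$ is globally generated at every point of $V$; consequently, for any irreducible closed subscheme $W \subseteq \bar V$ meeting $V$, there is a global section of $\mathcal{L}^n \otimes \mathcal{I}_Z$ not vanishing on $W$. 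A finite general-position argument then supplies sections $s_1, s_2 \in H^0(\bar V, \mathcal{L}^n \otimes \mathcal{I}_Z)$ whose zero loci $D_1, D_2$ satisfy any desired finite list of such ``non-vanishing'' properties.

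For (a), take $s_1, s_2$ so that each $D_i$ contains $Z$, neither $D_i$ contains any irreducible component of $\bar V$, and $D_1, D_2$ share no common irreducible component. Set $U_i := V \setminus D_i = \bar V \setminus D_i$ (equal because $D_i \supseteq Z$), which is affine as the complement of a very ample divisor in the projective variety $\bar V$; likewise $U_0 := U_1 \cap U_2$ is affine. Density of each $U_e$ in $V$ follows from the condition that no $D_i$ contains any component of $\bar V$, and the complement of $U_1 \cup U_2$ in $V$ equals $V \cap D_1 \cap D_2$, which has codimension at least $2$ in $V$ because $D_1 \cap D_2$ has codimension $2$ in $\bar V$.

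For (b), assume $V$ is connected with irreducible components $V_1, \ldots, V_r$. Strengthen the general-position requirement: additionally demand that neither $D_1$ nor $D_2$ contains any irreducible component of any non-empty intersection $V_{i_1} \cap \cdots \cap V_{i_k}$; this is still a finite list of open conditions, each satisfiable because every such intersection meets $V$. Given any connected open $O \subseteq V$ and any $e \in \{0,1,2\}$, the irreducible components of $O \cap U_e$ are the non-empty $(O \cap V_i) \cap U_e$, each of which is open in the irreducible $V_i$ and therefore connected. Whenever $O \cap V_i \cap V_j \ne \emptyset$, some irreducible component of $V_i \cap V_j$ meets $O$ and is not contained in $D_1 \cup D_2$, so $(O \cap V_i \cap V_j) \cap U_e$ is non-empty. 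Hence the dual graph of the irreducible components of $O \cap U_e$ coincides with that of $O$, and is connected because $O$ is; so $O \cap U_e$ is connected.

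The main obstacle is realizing the various general-position conditions simultaneously, particularly over a finite residue field: one must take $n$ large enough that the codimensions of the ``bad'' linear subspaces of $H^0(\bar V, \mathcal{L}^n \otimes \mathcal{I}_Z)$ grow sufficiently, enabling a Bertini-style counting argument to guarantee a section of $\mathcal{L}^n \otimes \mathcal{I}_Z$ that avoids their union.
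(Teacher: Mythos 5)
Your construction is, at its core, the same as the paper's: both arguments reduce to producing affine dense opens $U_1,U_2$ whose complements avoid the generic points of every irreducible component of $V$ and of every pairwise intersection $V_i\cap V_j$ (these are exactly your conditions that neither $D_e$ contain such a component), both obtain the codimension-two property by forcing $U_2$ to contain the generic points of $V\smallsetminus U_1$ (your condition that $D_1,D_2$ share no common component), and both prove (b) by the same component-graph connectivity criterion. The paper simply outsources the key existence statement --- an affine open containing a prescribed finite set of points of a quasi-projective scheme --- to \cite[Proposition~3.3.36(b)]{Liu}, whereas you re-derive it via sections of $\mc L^n\otimes\mc I_Z$. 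Two small slips along the way: the condition you actually need is that $D_1$ and $D_2$ share no common component \emph{meeting} $V$ (codimension-one components of $Z=\ov{V}\smallsetminus V$ lie in both $D_1$ and $D_2$ automatically, which is harmless but contradicts the condition as you stated it), and the nonempty sets $(O\cap V_i)\cap U_e$ need not all be irreducible components of $O\cap U_e$, though the union-of-connected-closed-sets argument goes through regardless.

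The genuine gap is the one you flag yourself, and your proposed fix does not work as stated. The ``bad'' subspace attached to an irreducible $W$ is $H^0(\ov{V},\mc L^n\otimes\mc I_{Z\cup W})$, and its codimension in $H^0(\ov{V},\mc L^n\otimes\mc I_Z)$ equals, for $n\gg 0$, the value of the Hilbert polynomial of the quotient sheaf supported on $W$; this grows with $n$ only when $\dim W\ge 1$. For zero-dimensional $W$ --- an isolated point of $V$, a point-component of some $V_i\cap V_j$ (e.g.\ two planes meeting in a point), or a point-component of $D_1$ when $\ov{V}$ has curve components --- the codimension stabilizes at $[\kappa(W):k]$ no matter how large $n$ is, so over a small finite field the naive union bound fails as soon as there are at least $q$ such conditions (and indeed $q+1$ hyperplanes can cover a vector space over $\mathbb F_q$). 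A correct finite-field argument must treat the zero-dimensional conditions by a different mechanism: use surjectivity of the evaluation map $H^0(\mc L^n\otimes\mc I_Z)\to\bigoplus_W\kappa(W)$ (valid for $n\gg0$ by Serre vanishing) to show that a positive proportion $\prod_W\bigl(1-q^{-[\kappa(W):k]}\bigr)$ of sections avoid all of them, and only then compare with the proportion (tending to $0$ as $n\to\infty$) lost to the positive-dimensional conditions --- a Poonen-style density computation rather than a pure codimension count. Alternatively, you can bypass this entirely by quoting the standard fact that finitely many points of a quasi-projective scheme over an affine Noetherian base lie in a common affine open, which is exactly what the paper does.
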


\begin{proof}
It suffices to prove the lemma under the hypothesis that $V$ is connected 
(i.e., proving part~(b)), since part~(a) 
then follows by considering the connected components of $V$.

Let $V_1,\dots,V_n$ be the irreducible components of $V$, with generic points $\eta_1,\dots,\eta_n$.  For $i\ne j$, 
consider the irreducible components $V_{i,j,\ell}$
of $V_i \cap V_j$, and write $\eta_{i,j,\ell}$ for the generic point of $V_{i,j,\ell}$.

We first give a criterion for a non-empty open subset $O \subseteq V$ to be connected.  Given~$O$, 
let $S_O \subseteq \{1,\dots,n\}$ be the set of indices $i$ such that $\eta_i \in O$ (or equivalently, $O \cap V_i$ is non-empty).
Thus the closure of $O$ is the union of the irreducible components $V_i$ for $i \in S_O$.
So $O$ is connected if and only if for every pair $i,j \in S_O$, there exists a chain of indices $i_0,\dots,i_r \in S_O$ with $i_0=i$ and $i_r=j$, such that for every $h=0,\dots,r-1$ the set 
$O$ contains $\eta_{i_h,i_{h+1},\ell}$ for some $\ell$.  

In particular, if an open subset $O \subseteq V$ contains each $\eta_i$ (for $i=1,\dots,n$) and each of the points $\eta_{i,j,\ell}$ (for all $i,j,\ell$), then $O$ is connected and dense in $V$.

We now construct the open sets $U_e$ asserted in the lemma.
Since $V$ is a quasi-projective variety over a field, by \cite[Proposition~3.3.36(b)]{Liu} there exists an affine open subset $U_1 \subseteq V$ that contains each $\eta_i$ and each $\eta_{i,j,k}$.  
Thus $U_1$ is a connected affine dense open subset of $V$, by the above criterion; and so the complement $Z$ of $U_1$ in $V$
has codimension at least one in $V$. 
Similarly, there exists an affine open subset $U_2 \subseteq V$ that contains each $\eta_i$, each $\eta_{i,j,k}$, and the generic points of each irreducible component of $Z$.  Thus $U_2$ is also a connected dense affine open subset of $V$, and so is $U_0 := U_1 \cap U_2$, which contains each of the points $\eta_i$ and $\eta_{i,j,k}$.
The intersection $U_2 \cap Z$ is dense in $Z$, since $U_2$ contains the generic points of $Z$.  
Thus the complement $Y$ of $U_2 \cap Z$ in $Z$ has codimension at least one in $Z$.  
Hence $Y$, which is also the complement of $U_1 \cup U_2$ in $V$, has codimension at least two in $V$, as asserted.

Finally, let $O \subseteq V$ be an arbitrary (non-empty) connected open subset; let $O_e=O\cap U_e$ for $e=0,1,2$; 
and let the set $S_O$ be as in the third paragraph of this proof.  Thus for every pair $i,j \in S_O$, there is a chain of indices in $S_O$ connecting $i$ to $j$ as above.  Since $U_e$ contains all the points $\eta_i$ and $\eta_{i,j,\ell}$, it follows that $S_{O_e}=S_O$ for $e=0,1,2$.  Thus $O_e$ also satisfies the above chain criterion, and hence it is connected.
\end{proof}

\begin{lem} \label{pull push}
Let $T$ be a complete discrete valuation ring with residue field $k$,
and let $\ms X$ be a quasi-projective normal integral $T$-scheme with 
reduced closed fiber~$X$ and formal completion~$\mf X$.  Let 
$f:V \to U$ be an inclusion of open subsets of $X$
such that $V$ is dense in $U$,
and write $\mf V, \mf U$ for the formal open subschemes of $\mf X$ associated to $V,U$.  
Let $\ms G$ be a torsion-free coherent sheaf on $\mf U$.  
Then $\ms G$ is a subsheaf of $\wh f_* \wh f^*\ms G$ via the natural morphism $\ms G \to \wh f_* \wh f^*\ms G$, and $\wh f_* \wh f^*\ms G$ is torsion-free .
\end{lem}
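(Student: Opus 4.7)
The plan is to verify that the unit-of-adjunction morphism $\ms F \to \wh f_* \wh f^*(\ms F)$ is injective by checking it on sections, over a base of open sets consisting of affine open subsets $U' \subseteq U$. Since $\wh f$ realizes $\mf V$ as an open formal subscheme of $\mf U$, the pullback $\wh f^*(\ms F)$ coincides with the restriction $\ms F|_{\mf V}$, and for each affine open $U' \subseteq U$ we may identify
\[
\wh f_* \wh f^*(\ms F)(U') \;=\; \ms F(V \cap U'),
\]
with the unit morphism becoming the restriction map $\ms F(U') \to \ms F(V \cap U')$. Injectivity of the natural morphism of sheaves is then reduced to showing this restriction map is injective for every affine open $U' \subseteq U$.

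Fix such a $U'$. Because $\ms X$ is quasi-projective over $T$, the reduced closed fiber $X$ is quasi-projective over the residue field, and hence so is the open subset $V \cap U' \subseteq X$; it is non-empty, and in fact dense in $U'$, since $V$ is assumed dense in $U$. By Lemma~\ref{exist 2 conn opens}(a) applied to $V \cap U'$, we may choose an affine open subset $V' \subseteq V \cap U'$ that is dense in $V \cap U'$, and consequently dense in $U'$. We now factor the restriction as
\[
\ms F(U') \longrightarrow \ms F(V \cap U') \longrightarrow \ms F(V').
\]
Since $\ms F$ is torsion-free and coherent on $\mf U$, its sections $M := \ms F(U')$ form a finitely generated torsion-free module over $\wh R_{U'}$, and Proposition~\ref{pushforward torsion-free}(b) identifies the composite above with the canonical map $M \to M \otimes_{\wh R_{U'}} \wh R_{V'}$. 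Lemma~\ref{module inj}, applied to the pair of affine opens $V' \subseteq U'$ and the torsion-free finitely generated module $M$, states exactly that this map is injective. Hence the first arrow in the factorization is injective as well, as required.

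There is no substantial obstacle here beyond threading the pieces together; the only genuine step that required production is the choice of the affine dense open $V' \subseteq V \cap U'$, which is needed in order to bring the argument into the affine framework where Proposition~\ref{pushforward torsion-free}(b) and Lemma~\ref{module inj} are available. The quasi-projectivity hypothesis on $\ms X$ enters exactly to guarantee the existence of such a $V'$, via Lemma~\ref{exist 2 conn opens}(a); the torsion-freeness of $\ms F$ is what makes Lemma~\ref{module inj} applicable.
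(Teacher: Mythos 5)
Your proposal is correct and follows essentially the same route as the paper's proof: reduce to injectivity of $\ms F(U') \to \ms F(U' \cap V)$ on affine opens $U'$, factor through a dense affine open, and conclude via Proposition~\ref{pushforward torsion-free}(b) together with Lemma~\ref{module inj}. The only cosmetic difference is that you produce the dense affine open $V'$ inside $V \cap U'$ via Lemma~\ref{exist 2 conn opens}(a), whereas the paper simply takes an affine dense open $V' \subseteq V$ and works with $U' \cap V'$; both are fine.
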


\begin{proof}
For every open subset $U' \subseteq U$, we have a restriction map $\ms G(U') \to \ms G(U' \cap V) = \wh f_* \wh f^*\ms G(U')$, and these are compatible as $U'$ varies.  These maps define a morphism $\ms G \to \wh f_* \wh f^*\ms G$.
Since injectivity of sheaves is local, in order to show that $\ms G$ is a subsheaf of $\wh f_* \wh f^*\ms G$ via this morphism, it suffices to show that if $U'$ is affine then $\ms G(U') \to \ms G(U' \cap V)$ is injective.  Let $V'$ be an affine dense open subset of $V$.  
By Proposition~\ref{pushforward torsion-free}(b),
the natural map 
$\ms G(U' \cap V') \to \ms G(U') \otimes_{\wh R_{U'}} \wh R_{U' \cap V'}$ is an isomorphism.  Since $U' \cap V'$ is dense in $U'$, 
Lemma~\ref{module inj} then yields that the map $\ms G(U') \to \ms G(U' \cap V')$ is injective.  But this map factors through $\ms G(U') \to \ms G(U' \cap V)$; and so that map is injective as well.  Thus the torsion-free coherent sheaf $\ms G$ is a subsheaf of $\wh f_* \wh f^*\ms G$, as asserted.  Here $\wh f_* \wh f^*\ms G$ is necessarily torsion-free, by Lemma~\ref{pushforward torsion-free}.
\end{proof}

We now come to our main finiteness result, 
which generalizes Lemma~\ref{2 open coherent}, and provides a
weak analog for formal schemes of the assertion in Theorem~\ref{scheme pushforward}.

\begin{thm} \label{genl fin ext}  
Let $T$ be a complete discrete valuation ring 
and let $\ms X$ be a quasi-projective normal integral 
$T$-scheme 
with 
reduced closed fiber $X$ and formal completion $\mf X$.  Let $U$ be a non-empty open subset of $X$, and let $V$ be an open subset of $U$ whose complement in $U$ has codimension at least two.
Let $f:V\to U$ be the inclusion map, and write $\frak U, \frak V$ for the formal open subschemes of $\mf X$ associated to $U,V$. Let $\ms F$ be a torsion-free coherent sheaf on~$\frak V$.  Then $\wh f_*\ms F$ is a finite torsion-free 
sheaf on $\mf U$. 
\end{thm}

\begin{proof}
Let $k$ be the residue field of $T$.
Since $\ms X$ is quasi-projective over $T$, the $k$-scheme $V$ is quasi-projective over $k$.
By Lemma~\ref{exist 2 conn opens}(a), 
we may choose affine open dense subsets  $U_1, U_2 \subseteq V$ such that the complement of  $W := U_1 \cup U_2$ in $V$ has codimension at least two.  
Hence the complement of $W$ in $U$ also has codimension at least two.
Let $g:W \to V$ be the inclusion map.
By Proposition~\ref{pushforward torsion-free}(a),
pullbacks with respect to open inclusions preserve the property of being a torsion-free coherent sheaf; so 
$\wh g^*\ms F$ is a torsion-free coherent sheaf on the formal scheme $\mf W$ associated to $W$.  
By Lemma~\ref{2 open coherent} applied to $\wh g^*\ms F$ and the inclusion $fg:W \to U$, we have that
$(\wh{fg})_* \wh g^*\ms F$ is a finite torsion-free 
sheaf on $\mf U$. 

By Lemma~\ref{pull push}, $\ms F$ is a subsheaf of $\wh g_*\wh g^*\ms F$ on $\mf V$.  
Thus $\wh f_*(\ms F)$ is a subsheaf of $\wh f_* \wh g_*\wh g^* \ms F = (\wh{fg})_*\wh g^* \ms F$,
and hence is finite.
It is also torsion-free, being a subsheaf of the torsion-free sheaf $(\wh{fg})_* \wh g^*\ms F$ (or by Proposition~\ref{pushforward torsion-free}(c)).
\end{proof}

\begin{rem} \label{rems on fin pushforward}
\begin{enumerate} [(a)]
\item \label{weaker hyp remark}
In Theorem~\ref{genl fin ext}, it would suffice to assume that $\ms X$ is a normal integral $T$-scheme and that $V$ is quasi-projective over $k$, rather than requiring $\ms X$ to be quasi-projective over $T$, because the proofs use only the weaker assumption.
\item \label{rem reductions not tor free}
The proof of Theorem~\ref{genl fin ext} relies in particular on Lemma~\ref{2 open coherent}, whose proof uses the technical results in Section~\ref{patches} and therefore also builds on those in Section~\ref{2 lemmas}.
As mentioned at the end of Section~\ref{background},
it would be tempting to try to prove Theorem~\ref{genl fin ext} more directly by using 
Theorem~\ref{scheme pushforward} or the ingredients used in its proof; viz., by applying such assertions about schemes
to the reductions of the given sheaf modulo powers of the uniformizer of $T$.
The difficulty with that approach is that these reductions need not be torsion-free.  For example, take $T = k[[t]]$ for some field $k$ and take $\ms X = \mbb A^1_T$.  Let $U$ be the closed fiber $\mbb A^1_k$, so that $\wh R_U = k[x][[t]]$.  Let $M$ be the torsion-free $\wh R_U$-module with two generators $m_1,m_2$ and the single relation $xm_1-tm_2=0$.  In the reduction $M_i$ of $M$ modulo $t^i$, the element $t^{i-1}m_1$ is $x$-torsion, and $x$ is regular.  So $M_i$ is not torsion-free, and $(x,t^i)$ is a non-minimal associated prime of $M_i$ (being the annihilator of $t^{i-1}m_1$), with support of codimension one in $X_i = \mbb A^1_{T/(t^i)}$. 
Hence one cannot apply \cite[Lemma~0AWA]{stacks} 
(or \cite[Partie 2, Corollaire~5.11.4(ii)]{EGA4}) to $M_i$.   
Note also 
that as in Theorem~\ref{scheme pushforward}, the torsion-free hypothesis cannot simply be dropped; see the discussion after that assertion.
\end{enumerate}
\end{rem}

\section{The reflexive case} \label{reflexive case}

The phenomena discussed in Section~\ref{pushforwards} are better behaved in the situation of reflexive sheaves, which includes the case of locally free sheaves.

\subsection*{Preliminaries}

Proposition~\ref{patch 2} says in particular that $\wh R_{U_1} \cap \wh R_{U_2}$ is a finitely generated torsion-free module over $\wh R_U$, where $U_1,U_2\subseteq U$ are affine dense open subsets such that the complement of $U_1 \cup U_2$ has codimension at least two.  In fact, more is true:

\begin{prop} \label{intersection}  
Let $T$ be a complete discrete valuation ring, and let $\ms X$ be a normal integral 
$T$-scheme of finite type.
Let $U_0,U_1,U_2,U$ be connected affine open subsets of the reduced closed fiber $X$ of $\ms X$, 
with $U_1,U_2 \subseteq U$ dense, and with $U_0 = U_1 \cap U_2$,
such that the complement of $W := U_1 \cup U_2$ in $U$ has codimension at least two.  Then $\wh R_{U_1} \cap \wh R_{U_2} = \wh R_U$, where the intersection takes place in $\wh R_{U_0}$.
\end{prop}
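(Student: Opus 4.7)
The inclusion $\wh R_U \subseteq \wh R_{U_1} \cap \wh R_{U_2}$ is immediate from the fact that the restriction maps $\wh R_U \to \wh R_{U_e} \to \wh R_{U_0}$ are injective (Lemma~\ref{inclusion lemma}(c), using the connectedness hypotheses) and compose compatibly, so the plan is to establish the reverse containment. Set $M := \wh R_{U_1} \cap \wh R_{U_2} \subseteq \wh R_{U_0}$. By the sheaf property of $\mc O_{\mf X}$ applied to the cover $W = U_1 \cup U_2$ with intersection $U_0$, one has $M = \mc O_{\mf X}(W) = \wh g_*(\mc O_{\mf W})(U)$, where $g : W \hookrightarrow U$ is the inclusion. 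Since $W$ is open in the affine $k$-variety $U$, hence quasi-projective over $k$, Theorem~\ref{genl coh ext} (via the remark after Corollary~\ref{max soln shf}) gives that $\wh g_*(\mc O_{\mf W})$ is a torsion-free coherent $\mc O_{\mf U}$-module. Thus $M$ is a finitely generated torsion-free $\wh R_U$-module and, being a subring of the domain $\wh R_{U_0}$ containing $\wh R_U$, is integral over $\wh R_U$ by Cayley--Hamilton.

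The crux of the argument will be to show $M \subseteq K_U := \Frac(\wh R_U)$. To that end, consider the natural map $\phi : \mc O_{\mf U} \to \wh g_*(\mc O_{\mf W})$, which is an isomorphism over $\mf W$ and is injective on each non-empty affine open $V \subseteq U$: any such $V$ meets $W$ (otherwise the open $V$ would sit inside $U \setminus W$, which has codimension $\geq 2$), and Lemma~\ref{inclusion lemma}(c) then yields injectivity of $\wh R_V \to \wh R_{V \cap W}$. The cokernel $\ms Q$ of $\phi$ is therefore a coherent sheaf on $\mf U$ whose set-theoretic support is contained in the proper closed subset $U \setminus W \subsetneq \mf U$. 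Taking global sections is exact on coherent sheaves on the affine Noetherian formal scheme $\mf U$, so $\ms Q(U) = M/\wh R_U$. Under the equivalence between coherent $\mc O_{\mf U}$-modules and finitely generated $\wh R_U$-modules, the sheaf support of $\ms Q$ equals $V\bigl(\ann_{\wh R_U}(M/\wh R_U)\bigr) \cap \mf U$; since this is a proper subset of $\mf U$, the annihilator $\ann_{\wh R_U}(M/\wh R_U)$ is non-zero. Because $\wh R_U$ is a domain (Lemma~\ref{RU props}(c)), this annihilator contains a regular element, so $M/\wh R_U$ is $\wh R_U$-torsion.

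Tensoring $0 \to \wh R_U \to M \to M/\wh R_U \to 0$ with the flat $\wh R_U$-module $K_U$ then yields $M \otimes_{\wh R_U} K_U \iso K_U$, and torsion-freeness of $M$ gives $M \hookrightarrow M \otimes_{\wh R_U} K_U = K_U$. Since $M$ is integral over $\wh R_U$ and $\wh R_U$ is normal (Lemma~\ref{RU props}(a)), hence integrally closed in $K_U$, the containment $M \subseteq \wh R_U$ follows. The main obstacle will be precisely this step $M \subseteq K_U$: although $M$ lives in $\wh R_{U_0}$, the fraction field $K_{U_0}$ typically strictly contains $K_U$ (formal completions introduce power series transcendental over $K_U$), so one cannot invoke normality directly. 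It is the codimension-two hypothesis, converted through the sheaf $\ms Q$ into the ring-theoretic assertion $\ann_{\wh R_U}(M/\wh R_U) \ne 0$, that produces the torsion needed to conclude.
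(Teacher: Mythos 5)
Your argument is correct, and its second half takes a genuinely different route from the paper's. Both proofs need the finiteness of $M = \wh R_{U_1}\cap\wh R_{U_2}$ over $\wh R_U$ and both close with normality of $\wh R_U$; the paper gets finiteness by citing Proposition~\ref{patch 2} directly, while you route it through Theorem~\ref{genl coh ext} via the remark after Corollary~\ref{max soln shf} (legitimate, since that theorem is proved before this proposition and only needs $W$ quasi-projective over $k$). Where you diverge is in showing that the finite extension $\wh R_U\subseteq M$ does not enlarge the fraction field. The paper works modulo $t^n$: it shows $A/t^nA$ injects into $A_n:=\wh R_{U_1}/(t^n)\cap\wh R_{U_2}/(t^n)$, computes $A_n\otimes_{\wh R_U/(t^n)}\wh R_{U_1}/(t^n)=\wh R_{U_1}/(t^n)$ by flat base change of a quasi-coherent pushforward on the truncated scheme, passes to the inverse limit to get $A\otimes_{\wh R_U}\wh R_{U_1}\subseteq\wh R_{U_1}$, and then counts dimensions over $K_{U_1}$ to force the generic degree to be $1$. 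You instead observe that the cokernel $\ms Q$ of $\mc O_{\mf U}\to\wh g_*(\mc O_{\mf W})$ is coherent and vanishes on $\mf W$, so $M/\wh R_U$ has nonzero annihilator and is therefore torsion, giving $M\subseteq K_U$ directly. Your argument is shorter and arguably more conceptual (it isolates the codimension-two hypothesis as the single source of torsion), at the cost of leaning harder on the formal-sheaf formalism. Two small points deserve a sentence each in a final write-up: (i) the injectivity of $\wh R_V\to\mc O_{\mf W}(V\cap W)$ for non-affine $V\cap W$ is not literally Lemma~\ref{inclusion lemma}(c); one should decompose $V$ into connected components, choose a nonempty affine open of $W$ in each, and use the sheaf axiom. (ii) The identification of the support of a coherent formal sheaf $Q^\Delta$ with $V(\ann Q)\cap U$ uses that the stalks of $\mc O_{\mf U}$ are faithfully flat over the corresponding local rings of $\wh R_U$; you can sidestep this entirely by restricting $\ms Q$ to $\mf U_1$, invoking Proposition~\ref{pushforward torsion-free}(b) to get $(M/\wh R_U)\otimes_{\wh R_U}\wh R_{U_1}=0$, and using flatness of $\wh R_{U_1}$ over $\wh R_U$ (Lemma~\ref{flat patches}(b)) together with injectivity of $\wh R_U\to\wh R_{U_1}$ to conclude that $M/\wh R_U$ is torsion.
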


\begin{proof}
Observe that $\wh R_{U_e} \to \wh R_{U_0}$ is injective for $e=1,2$ by Lemma~\ref{inclusion lemma}(c),
because $U_0$ is dense in $U_e$.  Viewing $\wh R_{U_e}$ as a subring of $\wh R_{U_0}$, we let $A = \wh R_{U_1} \cap \wh R_{U_2} \subseteq \wh R_{U_0}$.  
By Lemma~\ref{RU props}, 
$\wh R_U, \wh R_{U_1}, \wh R_{U_2}, \wh R_{U_0}$ are normal domains; hence so is $A$.
Since $\wh R_U \subseteq \wh R_{U_e}$ for each $e$, we have $\wh R_U \subseteq A$.  
We wish to show that this containment is an equality.

Let $t$ be a uniformizer of $T$.  For $n \ge 1$, and for $e=0,1,2$, the scheme $\Spec(\wh R_{U_e}/(t^n))$ has the same underlying topological space as $U_e$.  So
$\Spec(\wh R_{U_0}/(t^n))$ is a Zariski dense open subset of $\Spec(\wh R_{U_i}/(t^n))$ for $i=1,2$; and the map $\wh R_{U_i}/(t^n) \to \wh R_{U_0}/(t^n)$ is injective.  We may thus form the intersection $A_n := \wh R_{U_1}/(t^n) \cap \wh R_{U_2}/(t^n)$ in $\wh R_{U_0}/(t^n)$.  We claim that the natural map $\alpha_n:A/t^nA \to A_n$ is injective.  This is trivial if the integral scheme $\ms X$ lies over the closed point $(t)$ of $\Spec(T)$, since then $\alpha_n$ is the identity.  Otherwise, $\ms X$ dominates $\Spec(T)$ and 
$t$ is a regular element of $\wh R_{U_0}$; i.e., $\wh R_{U_0}$ is $t$-torsion-free.  So in this case the claim follows from Lemma~\ref{inj lem}, taking $M_e = \wh R_{U_e}$.

By Proposition~\ref{patch 2}, 
$A$ is finite over $\wh R_U$ as an extension of normal domains, say of generic degree $d \ge 1$.  
So tensoring the ring extension $\wh R_U \subseteq A$ with the fraction field $K_U$ of $\wh R_U$, we obtain a finite field extension $K_U \subseteq A \otimes_{\wh R_U} K_U = \Frac(A)$ of degree $d$.
It remains to show that $d=1$, since then $\wh R_U \subseteq A$ is a finite extension of normal domains having the same fraction field, and this inclusion is then an equality as desired.

Let $A':= A \otimes_{\wh R_U}\wh R_{U_1}$.  Now $A$ is finite over $\wh R_U$, so $A'$ is finite over $\wh R_{U_1}$.  But $\wh R_{U_1}$ is complete; i.e., is equal to its own $t$-adic completion.  Hence the same holds for $A'$, since its completion is $A' \otimes_{\wh R_{U_1}} \wh R_{U_1}$ by \cite[Lemma~00MA, (3)]{stacks}. 
That is, $\displaystyle A' = \lim_\leftarrow A'/t^nA'$.
Now 
$A'/t^nA' 
 = (A \otimes_{\wh R_U} \wh R_{U_1}) \otimes_{\wh R_U} \wh R_U/(t^n)
 = \bigl(A \otimes_{\wh R_U} \wh R_U/(t^n)\bigr) \otimes_{\wh R_U/(t^n)} \bigl(\wh R_{U_1} \otimes_{\wh R_U} \wh R_U/(t^n)\bigr)
 = A/t^nA \otimes_{\wh R_U/(t^n)} \wh R_{U_1}/(t^n)$.
But $A/t^nA \subseteq A_n$ via the injection $\alpha_n$.  
Also, $\Spec(\wh R_{U_1}/(t^n))$ is an affine open subset of $\Spec(\wh R_U/(t^n))$ by \cite[Proposition~5.1.9]{EGA1},
and so $\wh R_{U_1}/(t^n)$ is flat over $\wh R_U/(t^n)$.  We thus obtain 
an inclusion 
\[A'/t^n A' = A/t^nA \otimes_{\wh R_U/(t^n)} \wh R_{U_1}/(t^n) \subseteq A_n \otimes_{\wh R_U/(t^n)} \wh R_{U_1}/(t^n).\]

Let $W_n := \Spec(\wh R_{U_1}/(t^n)) \cup \Spec(\wh R_{U_2}/(t^n)) \subseteq \Spec(\wh R_U/(t^n))$ and write
$f_n:W_n \to \Spec(\wh R_U/(t^n))$ for the natural inclusion.  
Thus $(f_n)_*(\mc O_{W_n})$ is a quasi-coherent sheaf on $\Spec(\wh R_U/(t^n))$, and its module of global sections is 
\[\mc O_{W_n}(\Spec(\wh R_{U_1}/(t^n)) \cap \mc O_{W_n}(\Spec(\wh R_{U_2}/(t^n))
= \wh R_{U_1}/(t^n) \cap \wh R_{U_2}/(t^n) = A_n.\]  
Hence $(f_n)_*(\mc O_{W_n}) = \til A_n$, the sheaf on $\Spec(\wh R_U/(t^n))$ associated to the $\wh R_U/(t^n)$-module $A_n$.
So for any affine open subset $\Spec(S) \subseteq \Spec(\wh R_U/(t^n))$, we have
$\Gamma(\Spec(S),(f_n)_*(\mc O_{W_n})) = A_n \otimes_{\wh R_U/(t^n)} S$.  In particular, 
\[A_n \otimes_{\wh R_U/(t^n)} \wh R_{U_1}/(t^n) = \Gamma(\Spec(\wh R_{U_1}/(t^n)),(f_n)_*(\mc O_{W_n})) = \Gamma(\Spec(\wh R_{U_1}/(t^n)),\mc O_{W_n}) = \wh R_{U_1}/(t^n).\]  Thus we have an inclusion $A'/t^nA' \subseteq \wh R_{U_1}/(t^n)$.  
Since taking inverse limits is left exact, it follows that 
$A' \subseteq  \wh R_{U_1}$.
Tensoring this inclusion over $\wh R_{U_1}$ with the fraction field $K_{U_1}$ of $\wh R_{U_1}$ (which is flat over $\wh R_{U_1}$), we find that $K_{U_1}$ contains $A' \otimes_{\wh R_{U_1}} K_{U_1} = A \otimes_{\wh R_U}\wh R_{U_1} \otimes_{\wh R_{U_1}} K_{U_1}
= A \otimes_{\wh R_U} K_{U_1}$.
Hence $K_{U_1}^d = K_U^d \otimes_{K_U} K_{U_1} = (A \otimes_{\wh R_U} K_U) \otimes_{K_U} K_{U_1} = A \otimes_{\wh R_U} K_{U_1} \subseteq K_{U_1}$, as $K_{U_1}$-vector spaces.  Thus $d=1$, completing the proof.
\end{proof}

\begin{rem} \label{intersection rk}
\begin{enumerate} [(a)]
\item
If the normality assumption is dropped from the hypotheses of the proposition, then the conclusion need not hold.  For example, suppose that $\ms X$ is an affine integral $T$-variety with closed fiber $X=U$, with $U_0,U_1,U_2$ as before, such that $\ms X$ is normal at the points of $W = U_1 \cup U_2$ but not at all the points of $U$.  Then $\wh R_{U_1}, \wh R_{U_2}$ are normal, and hence so is their intersection.  But $\wh R_U$ is not normal, and hence is strictly smaller than $\wh R_{U_1} \cap \wh R_{U_2}$.
\item
Similarly, the connectedness assumption cannot be dropped.  For example, suppose that $\ms X$ is a normal integral projective $T$-variety
such that the reduced closed fiber $X$ is a union of two copies of $\mbb P_k^2$ meeting at a single point $P$.  Let $U \subseteq X$ be the union of two copies of $\mbb A^2_k$ meeting at $P$, and let $U_1$ (resp., $U_2$) be the union of the complements of the $x$-axes (resp., the $y$-axes) in the two copies of $\mbb A^2_k$.  Then $\wh R_{U_1}= k[x,y,y^{-1}][[t]]^{\oplus 2}$ and $\wh R_{U_2}= k[x,x^{-1},y][[t]]^{\oplus 2}$.  So
$\wh R_{U_1} \cap \wh R_{U_2} = k[x,y][[t]]^{\oplus 2}$, which is strictly larger than $\wh R_U$, the difference being that the spectrum of the former consists of two disjoint copies of a thickened $\mbb A^2_k$.  
\end{enumerate}
\end{rem}

\begin{prop} \label{cod 2 struc shf}
Let $T$ be a complete discrete valuation ring with residue field $k$,
and let~$\ms X$ be a quasi-projective normal integral $T$-scheme with  
reduced closed fiber~$X$ and formal completion $\mf X$.  Let 
$f:V \hookrightarrow U$ be an inclusion of connected open subsets of $X$
such that the complement of $V$ in $U$ has codimension at least two,
and write $\mf V, \mf U$ for the formal open subschemes of $\mf X$ associated to $V,U$.  
Then $\wh f_* \mc O_{\mf V} = \mc O_{\mf U}$.
\end{prop}

\begin{proof}
We wish to show that $\mc O_{\mf U}(U') = \wh f_* \mc O_{\mf V}(U')$ for every connected affine open subset $U' \subseteq U$.
By Lemma~\ref{exist 2 conn opens}, there exist affine dense open subsets $U_0,U_1,U_2 \subseteq U'$
with $U_0 = U_1 \cap U_2$, such that the complement of $U_1 \cup U_2$ in $U'$ has codimension at least two, and such that $V_e := V \cap U_e$ is connected for $e=0,1,2$.  
Note that $V_e$ is dense in $V \cap U'$ and hence in $U'$.
Also, the complement of $W := V_1 \cup V_2$ in $U'$ has codimension at least two.  
Let $g:W \hookrightarrow V$ be the natural inclusion.   Thus $fg$ is the natural inclusion $W \hookrightarrow U$, and let $\mf W$ be the formal open subscheme of $\mf X$ associated to $W$.
By Proposition~\ref{intersection}, $\wh R_{U'} = \wh R_{V_1} \cap \wh R_{V_2}  \subseteq \wh R_{V_0}$.  That is, $\mc O_{\mf U}(U') = \mc O_{\mf U}(W) = \mc O_{\mf W}(W) = 
\wh {(fg)}_* \mc O_{\mf W}(U')$.  By Lemma~\ref{pull push}, 
$\mc O_{\mf U}$ is a subsheaf of $\wh f_* \wh f^* \mc O_{\mf U}$, and $\wh f^* \mc O_{\mf U}$ is a subsheaf of $\wh g_* \wh g^* \wh f^* \mc O_{\mf U}$; hence also $\wh f_* \wh f^* \mc O_{\mf U}$ is a subsheaf of $\wh f_* \wh g_* \wh g^* \wh f^* \mc O_{\mf U} =  \wh {(fg)}_*  \wh {(fg)}^* \mc O_{\mf U}$.  Thus $\mc O_{\mf U}(U') \subseteq \wh f_* \wh f^* \mc O_{\mf U}(U') \subseteq  \wh {(fg)}_*  \wh {(fg)}^* \mc O_{\mf U}(U') = \wh {(fg)}_* \mc O_{\mf W}(U') = \mc O_{\mf U}(U')$.  Hence these containments are equalities, and we have $\mc O_{\mf U}(U')  = \wh f_* \wh f^* \mc O_{\mf U}(U') = \wh f_* \mc O_{\mf V}(U')$, as asserted.
\end{proof}

In the situation of the above proposition, if we take $\ms G = \mc O_{\mf U}$, then $\wh f^*\ms G = \mc O_{\mf V}$ and so $\wh f_* \wh f^*\ms G=\ms G$ by the proposition.
But as the following example shows, for a general torsion-free coherent sheaf $\ms G$ on $\mf U$, the containment $\ms G \subseteq \wh f_* \wh f^*(\ms G)$ in Lemma~\ref{pull push} need not be an equality (even in the above situation 
where the complement of $V$ in $U$ has codimension at least two).

\begin{ex}  \label{strict containment ex}
Let $k$ be a field, let $T=k[[t]]$, and let $\ms X = \mbb P^2_T$, the projective $x,y$-plane over $T$, with closed fiber $X = \mbb P^2_k$.  Let $U = \mbb A^2_k \subset X$ and let $V \subset U$ be the complement of the origin, with inclusion morphism $f:V \to U$.  Thus $V = U_1 \cup U_2$, where $U_1, U_2 \subset U$ are the complements in $U$ of the $x$- and $y$-axes, respectively.  Let $\mf U, \mf V, \mf U_i$ be the formal schemes associated to $U,V,U_i$.  Thus $\mf U = \Spf(k[x,y][[t]])$.  Let $I$ be the ideal $(x,y) \subset k[x,y][[t]]$, and let $\ms G = I^\Delta$ be the coherent formal $\mc O_{\mf U}$-module associated to $I$ (see the beginning of Section~\ref{pushforwards}).  Note that $\ms G$ is torsion-free, but not flat (since it is not locally free).
The pullback $\wh f^*(\ms G)$ to~$\mf V$ is the structure sheaf $\mc O_{\mf V}$ (and so is free), and the pushforward $\wh f_* \wh f^*(\ms G)$ is the structure sheaf on $\mf U$.  Thus $\ms G \subseteq \wh f_* \wh f^*(\ms G)$ is a strict containment.
\end{ex} 

As a consequence, a torsion-free coherent sheaf $\ms F$ on $\mf V$ can be the pullback of more than one torsion-free coherent sheaf $\ms G$ on $\mf U$.  But if $\mf U$ is affine, and if $\ms F$ is a pullback of a torsion-free coherent sheaf $\ms G$ on $\mf U$, then there is a maximal such $\ms G$ in the sense that all others are subsheaves of it.  We prove this in Proposition~\ref{max coh} below.  First we state two lemmas.

Given a domain $R$ that is complete with respect to an ideal $I$, and given $R$-modules $M,N$, write $\wh M$ and $M \,\wh \otimes_R\, N$ for the $I$-adic completions of $M$ and $M \otimes_R N$, respectively.

\begin{lem}  \label{completed tensor la}
Let $T$ be a complete discrete valuation ring with uniformizer $t$, let $\ms X$ be a normal integral $T$-scheme of finite type, and let
$\mf X$ be the formal completion of $\ms X$ along its reduced closed fiber $X$.  
Let $W \subseteq U,V \subseteq Z$ be affine open subsets of $X$, with $U,V$ dense subsets of $Z$ and $W = U \cap V$, and let $\mf W, \mf U, \mf V, \mf Z$ be the associated formal subschemes of $\mf X$.  
\begin{enumerate} [(a)]
\item 
If $\ms F$ is a coherent sheaf on $\mf Z$, then the canonical map $\ms F(U) \,\wh \otimes_{\wh R_Z} \wh R_V \to \ms F(W)$ is an isomorphism.
\item
In particular, 
$\wh R_U \,\wh \otimes_{\wh R_Z} \, \wh R_V = \wh R_W$.
\end{enumerate}
\end{lem}

\begin{proof}
It suffices to prove part~(a), since part~(b) is the special case $\ms F = \mc O_{\mf Z}$.

For every $n \ge 1$, write $W_n, U_n, V_n, Z_n$ for the reductions of $\mf W, \mf U, \mf V, \mf Z$ modulo $t^n$, and let $F_n$ be the reduction of $\ms F$ modulo $t^n$.  By \cite[Th\'eor\`eme~10.11.3]{EGA1}, $F_n$ is a 
coherent sheaf on the scheme $Z_n$ and $\displaystyle\lim_\leftarrow F_n(U_n) = \ms F(U)$.  Similarly, $\displaystyle \lim_\leftarrow \mc O(V_n)=  \mc O_{\mf X}(V) = \wh R_V$ and $\displaystyle\lim_\leftarrow \mc O(Z_n) = \wh R_Z$.
Since 
$F_n$ is coherent, $F_n(U_n) = F_n(Z_n) \otimes_{\mc O(Z_n)} \mc O(U_n)$.  
Also $\mc O(W_n) = \mc O(U_n) \otimes_{\mc O(Z_n)} \mc O(V_n)$ since $U_n,V_n$ are affine open subsets of $Z_n$ with intersection~$W_n$ (see \cite[Proposition~5.1.9]{EGA1}).
So 
$F_n(U_n) \otimes_{\mc O(Z_n)} \mc O(V_n) = F_n(Z_n) \otimes_{\mc O(Z_n)} \mc O(U_n) \otimes_{\mc O(Z_n)} \mc O(V_n) = F_n(Z_n) \otimes_{\mc O(Z_n)} \mc O(W_n) = F_n(W_n)$ by the coherence of $F_n$.
Taking inverse limits yields the assertion of part~(a).
\end{proof}

\begin{lem} \label{complete tensor}
Let $R$ be a Noetherian domain that is complete with respect to a principal ideal $(t)$.  Let $P \to Q$ be a homomorphism of $R$-modules, with $Q$ flat over $R$.  Let $M$ be an  
$R$-module that is $t$-torsion-free, and write $N = \ker(M \otimes_R P \to M \otimes_R Q)$.
Then $\wh N \subseteq \ker(M \,\wh \otimes_R\, P \to M \,\wh \otimes_R\, Q)$.
\end{lem}  

\begin{proof}
Since $M$ is $t$-torsion-free, multiplication by $t$ is injective on $M$.  But $Q$ is flat over $R$.  Hence multiplication by $t$ is also injective on $M \otimes_R Q$; i.e., $M \otimes_R Q$ is also $t$-torsion-free.

We claim that the $t$-adic topology of $N$ coincides with the topology induced by the $t$-adic topology on $M \otimes_R P$.  To see this, take $i \ge 1$.  If $n \in N \cap t^i(M \otimes_R P)$, then we may write $n = t^im$ for some $m \in M \otimes_R P$.  Let $m'$ be the image of $m$ in $M \otimes_R Q$.  Then $t^im'$ is the image of $t^im=n$ in $M \otimes_R Q$, and hence $t^im'=0$ by definition of $N$.
But $M \otimes_R Q$ is $t$-torsion-free; so $m'=0$.  
Thus $m \in N$ and so $n = t^im \in t^iN$.
Hence $N \cap t^i(M \otimes_R P) = t^iN$, proving the claim.

Let $C$ be the image of $M \otimes_R P \to M \otimes_R Q$; this is the 
cokernel of $N \to M \otimes_R P$.
Thus the sequence 
$0 \to N \to M \otimes_R P \to C \to 0$ is exact, with $C \subseteq M \otimes_R Q$.
By \cite[Theorem~8.1]{Mats89}, together with the above claim, the sequence $0 \to \wh N \to M \,\wh \otimes_R\, P \to \wh C \to 0$ is exact.  Also, the inclusion $C \to M \otimes_R Q$ induces a map $\wh C \to M \,\wh \otimes_R\, Q$.
Hence we have $\wh N = \ker(M \,\wh \otimes_R\, P \to \wh C) \subseteq \ker(M \,\wh \otimes_R\, P \to M \,\wh \otimes_R\, Q)$.
\end{proof}

\begin{prop} \label{max coh} 
Let $T$ be a complete discrete valuation ring with residue field $k$,
and let~$\ms X$ be a quasi-projective normal integral $T$-scheme with  
reduced closed fiber~$X$ and formal completion $\mf X$.  Let 
$f:V \hookrightarrow U$ be an inclusion of connected open subsets of $X$ with $U$ affine,
such that the complement of $V$ in $U$ has codimension at least two,
and write $\mf V, \mf U$ for the formal open subschemes of $\mf X$ associated to $V,U$.
Let $\ms F$ be a torsion-free coherent sheaf on~$\mf V$ which is the pullback of a torsion-free coherent sheaf on $\mf U$.  Then $\ms F(V)$ is a finite $\wh R_U$-module, and so we can consider the coherent sheaf $\ms G = \ms F(V)^\Delta$ on $\mf U = \Spf(\wh R_U)$.  Then~$\ms G$ is torsion-free, $\wh f^*\ms G = \ms F$, and every torsion-free coherent sheaf on $\mf U$ that pulls back to $\ms F$ is a subsheaf of~$\ms G$.  
\end{prop}

\begin{proof}
Since $\ms F(V) = \wh f_*\ms F(U)$, it follows by Theorem~\ref{genl fin ext} that $\ms F(V)$ is a finite torsion-free $\wh R_U$-module, and hence $\ms G$ is defined, coherent, and torsion-free on  $\mf U$. 

By hypothesis, there exists a finite torsion-free coherent $\wh R_U$-module that pulls back to $\ms F$ on $\mf V$.  For any such $\wh R_U$-module $\ms H$, we have 
$\ms H(U) \subseteq \wh f_* \wh f^* \ms H(U) = \wh f_* \ms F(U) = \ms F(V) = \ms G(U)$
by Lemma~\ref{pull push} together with the condition that $\wh f^* \ms H = \ms F$. So for every affine open subset $W \subseteq U$, we have 
$\ms H(W) = \ms H(U) \otimes_{\wh R_U} \wh R_W \subseteq \ms G(U)  \otimes_{\wh R_U} \wh R_W = \ms G(W)$, by Proposition~\ref{pushforward torsion-free}(b) and the flatness of $\wh R_W$ over $\wh R_U$ (see Lemma~\ref{flat patches}(b)).  This proves the assertion that any such $\ms H$ is a subsheaf of $\ms G$.

To prove the proposition, it remains to show that $\wh f^*\ms G  = \ms F$.  Pick any $\ms H$ as above.  Since $\wh f^* \ms H = \ms F$, it suffices to show that $\ms G(W) = \ms H(W)$ for every affine open subset $W \subseteq V$.  We have already shown that $\ms H(W) \subseteq \ms G(W)$ for every such $W$.  So it suffices to show that $\ms G(W) \subseteq \ms H(W)$ for every affine open subset $W \subseteq V$.

Fix such a $W$.  Since $X$ is Noetherian, $V$ is the union of finitely many affine open subsets $U_i \subseteq U$.  Thus $W \subseteq V$ is the union of the affine open subsets $W_i := W \cap U_i$. For $i<j$ write $U_{ij} := U_i \cap U_j$ and $W_{ij} = W \cap U_{ij} = W_i \cap W_j$.  Thus
\[ \ms G(U) = \ms F(V)
= \wh f^*\ms H(V) = \ms H(V) = \ker\bigl(\bigoplus_i \ms H(U_i) \to \bigoplus_{i,j} \ms H(U_{ij})\bigr),\]
where the arrow is given by taking differences.

Since $\wh R_W$ is a flat $\wh R_U$-algebra, 
by tensoring the left exact sequence
\[0 \to \ms G(U) \to  \bigoplus_i \ms H(U_i) \to \bigoplus_{i,j} \ms H(U_{ij})\]
over $\wh R_U$ with $\wh R_W$ we obtain
\[0 \to  \wh R_W\otimes_{\wh R_U} \ms G(U)  \to \wh R_W\otimes_{\wh R_U} \bigoplus_i \ms H(U_i) \to  \wh R_W\otimes_{\wh R_U} \bigoplus_{i,j} \ms H(U_{ij}).\]
That is, 
\[ \wh R_W\otimes_{\wh R_U} \ms G(U) = \ker\bigl(\wh R_W \otimes_{\wh R_U} \ms H(U) \otimes_{\wh R_U} \bigoplus_i \wh R_{U_i} \to \wh R_W \otimes_{\wh R_U} \ms H(U) \otimes_{\wh R_U} \bigoplus_{i,j} \wh R_{U_{ij}}\bigr).\]
We now apply Lemma~\ref{complete tensor}, taking $R=\wh R_U$, 
$P = \bigoplus_i \wh R_{U_i}$, $Q =\bigoplus_{i,j} \wh R_{U_{ij}}$, $M  = \ms H(W) = \wh R_W \otimes_{\wh R_U} \ms H(U)$, and $N = \ms G(W)  = \wh R_W\otimes_{\wh R_U} \ms G(U)$.
We obtain 
\[ \ms G(W)
\subseteq  \ker\bigl( \ms H(W) \,\wh\otimes_{\wh R_U}\,\bigoplus_i \wh R_{U_i}  \to  
 \ms H(W) \,\wh\otimes_{\wh R_U}\, \bigoplus_{i,j}\wh R_{U_{ij}}\bigr).\]
 By Lemma~\ref{completed tensor la}(a), we have that
 $\ms H(W) \,\wh\otimes_{\wh R_U}\, \wh R_{U_i} = \ms H(W_i)$ and 
$\ms H(W) \,\wh\otimes_{\wh R_U}\, \wh R_{U_{ij}} = \ms H(W_{ij})$.
 So $\ms G(W) \subseteq \ker(\bigoplus_i \ms H(W_i) \to \bigoplus_{i,j} \ms H(W_{ij})\bigl) = \ms H(W)$, as needed. 
\end{proof}

\subsection*{Pushforwards and pullbacks of reflexive modules}

Recall that a sheaf $\ms G$ on $\mf U$ is {\it reflexive} if it is equal to its double dual $\ms G^{\vee\vee}$, where the dual $\ms G^{\vee}$ is defined to be $\ms H\!om_{\mc O_{\mf U}}(\ms G,\mc O_{\mf U})$.  Reflexivity is a local property of sheaves; reflexive sheaves are torsion-free (by \cite[Lemma~0AV0]{stacks}); 
and $\ms G^{\vee}$ is reflexive for all coherent $\mc O_{\mf U}$-modules $\ms G$ (by \cite[Lemma~0AV3]{stacks}).  
In particular, the {\it reflexive hull} $\ms G^{\vee\vee}$ of a coherent $\mc O_{\mf U}$-module $\ms G$ is necessarily reflexive; and there is a natural morphism $\ms G \to \ms G^{\vee\vee}$.  

If $\ms M, \ms N$ are coherent sheaves on $\mf U$, then so is $\ms H\!om_{\mc O_{\mf U}}(\ms M, \ms N)$ by \cite[10.10.2.2]{EGA1}, since coherence is a local condition and since coherent sheaves are locally of the form $M^\Delta$.  Hence if $\ms G$ is coherent on $\mf U$, then so are $\ms G^\vee$ and $\ms G^{\vee\vee}$.  Moreover if $U$ (or equivalently, $\mf U$) is affine, then $\ms G^\vee(U) = \ms G(U)^\vee$, again by \cite[10.10.2.2]{EGA1}.

\begin{lemma} \label{pull push dual}
Let $T$ be a complete discrete valuation ring,
let $\ms X$ be a $T$-scheme with reduced closed fiber~$X$, and let $f:V \hookrightarrow U$ be an inclusion of open subsets of $X$.  Let $\mf X$ be the formal completion of $\ms X$, 
and write $\mf V, \mf U$ for the formal open subschemes of $\mf X$ associated to $V,U$, with induced map $\wh f:\mf V \to \mf U$.  
If $\ms G$ is a coherent sheaf on $\mf U$, then $\wh f^*(\ms G^\vee) = (\wh f^*\ms G)^\vee$.
\end{lemma}

\begin{proof}
Let $O$ be an affine open subset of $V$.  Then 
$\wh f^*(\ms G^\vee)(O) = \ms G^\vee(O) = \ms G(O)^\vee 
= (\wh f^*\ms G(O))^\vee = (\wh f^*\ms G)^\vee(O).$  So $\wh f^*(\ms G^\vee) = (\wh f^*\ms G)^\vee$.
\end{proof}

\begin{prop} \label{pushforward reflex}
Let $T$ be a complete discrete valuation ring with residue field $k$, and let $\ms X$ be a quasi-projective normal integral $T$-scheme with  
reduced closed fiber~$X$ and formal completion $\mf X$.  Let 
$f:V \hookrightarrow U$ be an inclusion of connected open subsets of $X$
such that the complement of $V$ in $U$ has codimension at least two,
and write $\mf V, \mf U$ for the formal open subschemes of $\mf X$ associated to $V,U$. 
\begin{enumerate} [(a)]
\item \label{pushforward coh reflex}
Let $\ms F$ be a coherent sheaf on $\mf V$ such that $\wh f_* \ms F$ is coherent.  Then $\ms F$ is reflexive if and only if $\wh f_* \ms F$ is reflexive.
\item \label{pullback coh reflex}
Let $\ms G$ be a reflexive coherent sheaf on $\mf U$.   Then $\wh f^* \ms G$ is reflexive on $\mf V$.
\end{enumerate}
\end{prop}

\begin{proof}
For the forward direction of part~(\ref{pushforward coh reflex}), suppose that $\ms F$ is reflexive on $\mf V$.  Since $\wh f_*\ms F$ is coherent, $\ms F = \wh f^* \wh f_*\ms F$ is the pullback of a coherent sheaf on $\mf U$.  Hence Proposition~\ref{max coh} applies.  Thus $\ms G = \ms F(V)^\Delta$ is the maximum torsion-free coherent sheaf on $\mf U$ that pulls back to $\ms F$ on $\mf V$.  But by Lemma~\ref{pull push dual} together with the fact that $\ms F$ is reflexive, $\ms G^{\vee\vee}$ is also a torsion-free coherent sheaf on $\mf U$ that pulls back to $\ms F$.  Hence $\ms G^{\vee\vee} = \ms G$; i.e., $\ms G$ is reflexive. 
But $\ms G$ is the unique coherent sheaf on $\mf U$ such that $\ms G(U) = \ms F(V) = \wh f_* \ms F(U)$.  Since $\wh f_* \ms F$ is coherent, it follows that $\ms G= \wh f_* \ms F$.  So indeed $\wh f_* \ms F$ is reflexive.  

For the reverse direction of part~(\ref{pushforward coh reflex}), 
if $\wh f_*\ms F$ is reflexive, then $\ms F^{\vee\vee} = (\wh f^*\wh f_* \ms F)^{\vee\vee} = \wh f^*((\wh f_* \ms F)^{\vee\vee}) = \wh f^*\wh f_* \ms F = \ms F$ by Lemma~\ref{pull push dual}, and so $\ms F$ is reflexive.

For part~(\ref{pullback coh reflex}), 
since $\ms G$ is reflexive, it follows that $(\wh f^* \ms G)^{\vee\vee} = \wh f^* (\ms G^{\vee\vee}) = \wh f^* \ms G$ by Lemma~\ref{pull push dual}.  Thus $\wh f^* \ms G$ is reflexive.
\end{proof}

\begin{lem} \label{open nbd lem}
Let $\wh R$ be a Noetherian domain that is complete with respect to an ideal~$I$, let $U = \Spec(\wh R/I)$, let $\wh U = \Spec(\wh R)$, and let $\mf U = \Spf(\wh R)$.  Consider a Zariski open subset $f:V \hookrightarrow U$, with associated formal completion $\wh f:\mf V \to \mf U$.  Let $\phi:M \to N$ be a morphism of finite $\wh R$-modules, corresponding to a morphism $\til \phi:\til M \to \til N$ of coherent $\mc O_{\wh U}$-modules and inducing a morphism $\wh \phi:M^\Delta \to N^\Delta$ of coherent $\mc O_{\mf U}$-modules.  Suppose that $\wh f^*\wh \phi:\wh f^* M^\Delta \to \wh f^* N^\Delta$ is an isomorphism.  Then there exists a Zariski open subset $j:O \hookrightarrow \wh U$ with $V \subseteq O \cap U$ such that $j^*\til\phi:j^* \til M \to j^* \til N$ is an isomorphism.  Moreover the codimension of the complement of $O$ in $\wh U$ is greater than or equal to the codimension of the complement of $V$ in $U$.
\end{lem}

\begin{proof}
Let $P$ be a closed point of $V$.  The isomorphism $\wh f^*\wh \phi:\wh f^* M^\Delta \to \wh f^* N^\Delta$ induces  an isomorphism on the completed stalks at $P$; i.e., over $\wh{\mc O}_{\mf V,P} = \wh{\mc O}_{\mf U,P} =  \wh{\mc O}_{\wh U,P}$.  Hence so does $\til \phi:\til M \to \til N$.
Since $\mc O_{\wh U,P}$ is a Noetherian local ring, 
$\wh{\mc O}_{\wh U,P}$ is faithfully flat over $\mc O_{\wh U,P}$ by \cite[Proposition~III.3.5.9, Definition~III.3.3.2]{Bo:CA}.
It then follows 
by \cite[Proposition~I.3.1.2]{Bo:CA}
that $\til \phi:\til M \to \til N$ induces an isomorphism over the local ring $\mc O_{\wh U,P}$ of $\wh U$ at $P$.  
Since $M$ is finitely presented, $\til \phi:\til M \to \til N$ also induces an isomorphism over some Zariski open neighborhood $O_P$ of $P$ in $\wh U$.  

Let $O \subseteq \wh U$ be the union of the open subsets $O_P \subseteq \wh U$, as $P$ ranges over the closed points of $V$, and let $j:O \hookrightarrow \wh U$ be the natural inclusion.  Thus $V \subseteq O \cap U$, and $\til \phi:\til M \to \til N$ induces an isomorphism 
$j^*\til M \to j^*\til N$ over $O$.  This proves the first assertion.

For the second assertion, let $d$ be the codimension of the complement of $V$ in $U$.  Thus the complement of $O \cap U$ in $U$ has codimension at least $d$.  Since every non-empty closed subset of $\wh U$ having codimension less than $d$ meets $U$ in a closed subset of codimension less than $d$, the assertion follows.
\end{proof}

\begin{thm} \label{reflexive pull push}
Let $T$ be a complete discrete valuation ring with residue field $k$,
and let~$\ms X$ be a quasi-projective normal integral $T$-scheme with  
reduced closed fiber~$X$ and formal completion $\mf X$.  Let 
$f:V \hookrightarrow U$ be an inclusion of connected open subsets of $X$
such that the complement of $V$ in $U$ has codimension at least two,
and write $\mf V, \mf U$ for the formal open subschemes of $\mf X$ associated to $V,U$.  
If $\ms G$ is a coherent sheaf on $\mf U$ such that $\wh f^* \ms G$ is reflexive, then $\wh f_* \wh f^* \ms G$ is equal to the reflexive hull of $\ms G$ and thus
is a coherent sheaf on $\mf U$.
\end{thm}

\begin{proof}
Since the assertion is local, we may assume that $U = \ms U \cap X$ for some affine open subset $\ms U = \Spec(R) \subseteq \ms X$.  Let $\wh R$ be the completion of $R$ with respect to the valuation induced by that on $T$.
Thus $\mf U = \Spf(\wh R)$ and $\ms G = M^\Delta$, where $M = \ms G(U)$, a finite $\wh R$-module.  Moreover $\ms G$ is the formal sheaf induced by the coherent sheaf $\til M$ on the scheme $\wh U := \Spec(\wh R)$.  By the reflexivity hypothesis and by Lemma~\ref{pull push dual}, the natural morphism $\wh f^* M^\Delta \to (\wh f^* M^\Delta)^{\vee\vee} = \wh f^*((M^\Delta)^{\vee\vee}) = \wh f^*((M^{\vee\vee})^\Delta)$ is an isomorphism.  So by Lemma~\ref{open nbd lem}, there is a Zariski open subset $\til f:\wh V \hookrightarrow \wh U$ with $V \subseteq \wh V \cap U$, where the complement of $\wh V$ in $\wh U$ has codimension at least two, such that $\til f^*\phi:\til f^* \til M \to \til f^* \til M^{\vee\vee}$ is an isomorphism, where $\phi:\til M \to \til M^{\vee\vee}$ is the natural map.  That is, the coherent sheaf $\til f^* \til M$  on $\wh V$ is reflexive and in particular torsion-free.

Observe that by \cite[Proposition~1.6]{Hts80}, it follows that $\til f_* \til f^* \til M$ is reflexive on $\wh U$.  Namely, that result says that on a normal integral scheme, a torsion-free coherent sheaf $\ms M$ if reflexive if and only if the restriction map $\ms M(O) \to \ms M(O')$ is bijective for every choice of open sets $O' \subseteq O$ such that the complement of $O'$ in $O$ has codimension at least two.  If we take such $O' \subseteq O \subseteq U$, then $\til f_* \til f^* \til M(O) = \til f^* \til M(O \cap V)$, and $\til f_* \til f^* \til M(O') = \til f^* \til M(O' \cap V)$.  Since the torsion-free coherent sheaf $\til f^* \til M$ is reflexive and since the complement of $O' \cap V$ in $O \cap V$ has codimension at least two, it follows that 
$\til f_* \til f^* \til M(O) = \til f_* \til f^* \til M(O')$.  Since $\til f_* \til f^* \til M$ is a torsion-free coherent sheaf by Theorem~\ref{scheme pushforward}, it is indeed reflexive by \cite[Proposition~1.6]{Hts80}.

Thus, by taking double duals, the natural morphism $\til M \to \til f_* \til f^* \til M$ induces a morphism $\til M^{\vee\vee} \to (\til f_* \til f^* \til M)^{\vee\vee} = \til f_* \til f^* \til M$.
Via formal completion, this morphism induces a morphism $\ms G^{\vee\vee} \to \wh f_* \wh f^* \ms G$; and we wish to show that this is an isomorphism.

Observe that the morphism $\ms G^{\vee\vee} \to \wh f_* \wh f^* \ms G$ becomes an isomorphism upon restriction to~$\mf V$.  Namely, $\wh f^*\wh f_*$ is the identity since $V$ is open in $U$, and so $\wh f^*(\wh f_* \wh f^* \ms G) = \wh f^* \ms G$.  Meanwhile, $\wh f^*( \ms G^{\vee\vee}) = (\wh f^*\ms G)^{\vee\vee} = \wh f^*\ms G$, by Lemma~\ref{pull push dual} and the assumption that $\wh f^* \ms G$ is reflexive.  So indeed the restriction
$\wh f^*\ms G^{\vee\vee} \to \wh  f^*\wh f_* \wh f^* \ms G = \wh f^* \ms G$
of $\ms G^{\vee\vee} \to \wh f_* \wh f^* \ms G$ to $\mf V$ is an isomorphism.
Since $\ms G = M^\Delta$, 
by Lemma~\ref{open nbd lem} there is a Zariski open subset $j:\wh O \hookrightarrow \wh U$ with $V \subseteq \wh O \cap U$, where the codimension of the complement $\wh Z$ of $\wh O$ in $\wh U$ is at least two, such that $\til M^{\vee\vee} \to \til f_* \til f^* \til M$ induces an isomorphism 
$j^*(\til M^{\vee\vee}) \to j^*\til f_* \til f^* \til M$
over $\wh O$.

Now $\til M^{\vee\vee}$ is a reflexive coherent sheaf by \cite[Remark~0EBH]{stacks}, and it was observed above that $\til f_* \til f^* \til M$ is also a reflexive coherent sheaf.  Since the complement $\wh Z$ of $\wh O$ has codimension at least two in $\wh U$, \cite[Proposition~1.6]{Hts80} implies that  
$\til M^{\vee\vee} = j_*j^*(\til M^{\vee\vee})$ and 
$\til f_* \til f^* \til M = j_*j^*\til f_* \til f^* \til M$.
Hence by applying $j_*$, we see that the above isomorphism $j^*(\til M^{\vee\vee}) \to j^*\til f_* \til f^* \til M$ induces an isomorphism $\til M^{\vee\vee} \to \til f_* \til f^* \til M$.  
Passing to the formal completions, this in turn induces an isomorphism $(M^\Delta)^{\vee\vee} \to \wh f_*\wh f^*(M^\Delta)$.  That is, $\ms G^{\vee\vee} \to \wh f_* \wh f^* \ms G$ is an isomorphism; i.e., $f_* \wh f^* \ms G$ is the reflexive hull of $\ms G$.
Since the reflexive hull of a coherent sheaf on $\mf U$ is coherent (as noted before Lemma~\ref{pull push dual}), $\wh f_* \wh f^* \ms G$ is coherent.
\end{proof}

Thus, in Theorem~\ref{reflexive pull push}, if $\ms G$ is not reflexive, then the containment $\ms G \subseteq \wh f_* \wh f^*(\ms G)$ in Lemma~\ref{pull push} is strict, since $\wh f_* \wh f^*(\ms G)$ is reflexive.  This is illustrated in 
Example~\ref{strict containment ex} above.  In contrast, if the coherent formal sheaf $\ms G$ in Lemma~\ref{pull push} is assumed to be reflexive (e.g., locally free), and not just torsion-free, and if $V$ is connected, then the containment $\ms G \subseteq \wh f_* \wh f^*\ms G$ is an equality, by the following corollary.

\begin{cor} \label{reflexive pp cor}
Let $T$ be a complete discrete valuation ring with residue field $k$,
and let $\ms X$ be a quasi-projective normal integral $T$-scheme with  
reduced closed fiber~$X$ and formal completion $\mf X$.  Let 
$f:V \hookrightarrow U$ be an inclusion of connected open subsets of $X$
such that the complement of $V$ in $U$ has codimension at least two,
and write $\mf V, \mf U$ for the formal open subschemes of $\mf X$ associated to $V,U$.  
If $\ms G$ is a reflexive coherent sheaf on $\mf U$, then 
$\wh f_* \wh f^* \ms G = \ms G$.
\end{cor}

\begin{proof}
Since $\ms G$ is reflexive, so is $\wh f^* \ms G$, by Proposition~\ref{pushforward reflex}(\ref{pullback coh reflex}).
Hence $\wh f_* \wh f^* \ms G$ is the reflexive hull of $\ms G$ by Theorem~\ref{reflexive pull push}; and so the 
assertion follows from the assumption that $\ms G$ is reflexive.
\end{proof}

\begin{rem}
Concerning the necessity of the connectivity hypothesis in Corollary~\ref{reflexive pp cor}, 
consider the situation in Remark~\ref{intersection rk}(b), and let $V = U_1 \cup U_2$ there,
with inclusion morphism $f:V \to U$.  Then the complement of $V$ in~$U$ (viz., the point $P$) has codimension at least two in $U$, and $V$ is disconnected.  Let $\mf U, \mf V$ be the formal completions of $U, V$, with inclusion map $\wh f: \mf V \to \mf U$.  Then $\wh f_* \wh f^*\mc O_{\mf U}$ is strictly bigger than $\mc O_{\mf U}$, since the former sheaf is ``doubled'' at the point $P$ (corresponding to the strict containment $\wh R_U \subset \wh R_{U_1} \cap \wh R_{U_2}$ in Remark~\ref{intersection rk}(b)).  
\end{rem}

As in the above discussion, let $f:V \hookrightarrow U$ be an inclusion of connected open subsets of the reduced closed fiber $X$ of a normal integral $T$-scheme $\ms X$, with associated formal schemes~$\mf V, \mf U$.  Let $\ms F$ be a torsion-free coherent sheaf on $\mf V$ such that $\wh f_*\ms F$ is a coherent sheaf on $\mf U$.  Then $\wh f_*\ms F$ is also torsion free by Proposition~\ref{pushforward torsion-free}; and if $\ms F$ is moreover reflexive, then $\wh f_*\ms F$ is also reflexive by Proposition~\ref{pushforward reflex}(\ref{pushforward coh reflex}).  It is then natural to ask whether the coherent sheaf $\wh f_*\ms F$ is necessarily locally free (or equivalently, flat) provided that $\ms F$ is.  The analogous assertion for schemes holds in the two-dimensional regular case, since a reflexive sheaf on a regular scheme is locally free except in codimension at least three (see \cite[Lemma~1.4]{Hts80}).  
But as Example~\ref{nonflat pushforward ex} below shows, the assertion for regular schemes fails in dimension three, and it fails for formal schemes even in relative dimension two (where the associated $T$-scheme $\ms X$ has dimension three).

\begin{ex} \label{nonflat pushforward ex}
\begin{enumerate} [(a)]
\item
We first consider the scheme case.
In \cite[Example~1.1.13]{OSS}, the authors give an example of a coherent sheaf $G$ on $\mbb P^3_{\mbb C}$ that is reflexive 
but is not locally free, though it is locally free away from a certain closed point $x_0$ (say $(0:0:0:1)$).  Let $W$ be the complement of $x_0$ in $\mbb P^3_{\mbb C}$, let $f:W \to \mbb P^3_{\mbb C}$ be the inclusion map.  Thus $F = f^* G$ is locally free.  Since $G$ is reflexive, and since the complement of $W$ has codimension at least two (in fact, three), it follows from \cite[Proposition~1.6]{Hts80} that $G \cong f_* F$.  Thus $F$ is a locally free coherent sheaf on $W$, but $f_* F$ is not locally free.

\item
We use the above example to produce an example in the case of a formal scheme of relative dimension two.  Let $\mf X$ be the formal completion of 
$\ms X = \mbb A^2_{\mbb C[[t]]}$ along its closed fiber $X = \mbb A^2_{\mbb C}$.  Let $\pi$ be the composition
$\mf X \to \ms X \to \mbb A^3_{\mbb C} \to \mbb P^3_{\mbb C}$, where $\mbb A^3_{\mbb C}$ is the complement of the hyperplane at infinity in $\mbb P^3_{\mbb C}$ and where $x,y,t$ are the coordinates on $\mbb A^3_{\mbb C}$.  With notation as in part~(a), 
the pullback $\ms G = \pi^*G$ of $G$ to $\mf X$ is a coherent sheaf that is reflexive but not locally free, though it is locally free away from the point $P$ where $x=y=t=0$.  That is, it is locally free over the formal subscheme $\mf V \subseteq \mf X$ associated to the complement $V$ of $x=y=0$ in the closed fiber $X$ of $\mf X$.  Let $\wh f:\mf V \to \mf X$ be the natural inclusion, and consider the coherent sheaf $\ms F = \wh f^* \ms G$ on $\mf V$.  By Corollary~\ref{reflexive pp cor}, $\wh f_* \ms F = \wh f_* \wh f^* \ms G = \ms G$.  So $\ms F$ is a locally free coherent sheaf on $\mf V$ whose pushforward to $\mf X$ is reflexive and coherent but not locally free.
\end{enumerate}
\end{ex}

On the other hand, there is the following positive assertion, which was essentially pointed out to us by V.~Srinivas.

\begin{prop} \label{Weil Cartier}
Let $T$ be a complete discrete valuation ring with residue field $k$,
and let~$\ms X$ be a quasi-projective regular $T$-scheme with  
reduced closed fiber~$X$ and formal completion $\mf X$.  Let 
$f:V \hookrightarrow U$ be an inclusion of connected open subsets of $X$
such that the complement of $V$ in $U$ has codimension at least two,
and write $\mf V, \mf U$ for the formal open subschemes of $\mf X$ associated to $V,U$.  
If $\ms F$ is a locally free coherent sheaf of rank one on $\mf V$ such that $\wh f_*\ms F$ is coherent, then $\wh f_*\ms F$ is locally free of rank one on $\mf U$.
\end{prop}

\begin{proof}
Since the asserted property is local, we may assume that $U$ is an affine open subset of $X$ and that it is of the form $U = \ms U \cap X$ for some affine open subset $\ms U \subseteq \ms X$.  Thus $\mf U = \Spf(\wh R_U)$.
The coherent sheaf $\wh f_*\ms F$ on $\mf U$ is then of the form $M^\Delta$, where $M = \wh f_*\ms F(U) = \ms F(V)$.  Here $\wh f_*\ms F$ is reflexive, by Proposition~\ref{pushforward reflex}(\ref{pushforward coh reflex}).  Thus $M$ is a finite reflexive $\wh R_U$-module, and $\til M$ is a reflexive coherent sheaf on $\wh U := \Spec(\wh R_U)$.  
Now for every closed point $P \in \wh U$ (each of which is in $U$), the natural map
$\wh {\mc O}_{\wh U,P} \to \wh {\mc O}_{\ms X,P}$ is an isomorphism;
and for every closed point $P \in V$, so is $\wh {\mc O}_{\mf V,P} \to \wh {\mc O}_{\wh U,P}$.
So since $\ms X$ is regular, so is $\wh U$; 
and since $\ms F$ has rank one on $\mf V$, 
the reflexive sheaf $\til M$ has rank one on $\wh U$.
Thus by \cite[Proposition~1.9]{Hts80}, $\til M$ is locally free of rank one on $\wh U$.  Hence $M$ is locally free of rank one over $\wh R_U$, and $M^\Delta = \wh f_*\ms F$ is locally free of rank one on $\mf U$.
\end{proof}

\section{Finite vs.\ coherent pushforward} \label{fin vs coh}

In Theorem~\ref{genl fin ext} , we showed that in the context of an inclusion of formal sheaves $f:\mf V \to \mf U$ with complement of codimension at least two, 
the pushforward of a torsion-free coherent sheaf is finite.  And above,  we considered the situation of a torsion-free coherent sheaf $\ms F$ on $\mf V$ whose pushforward $\wh f_*\ms F$ to $\mf U$ is assumed to be coherent, rather than merely finite.  We next consider the question of whether the pushforward $\wh f_*\ms F$ will in fact be coherent.  

For context, consider first the analogous situation for an open inclusion $f:V \hookrightarrow U$ of Noetherian {\em schemes} (as opposed to formal schemes).  There, if a sheaf of modules is finite and quasi-coherent, then it is coherent because quasi-coherent sheaves have the form $\til M$ on every affine open subset, with each $M$ a finite module if and only if the sheaf is coherent.
Hence for $\ms F$ a coherent sheaf on $V$, if $f_*\ms F$ is finite then it is coherent, because the pushforward of a quasi-coherent sheaf under an open inclusion of schemes is always quasi-coherent (see \cite[Proposition~II.5.8(c)]{Hts} or \cite[Lemma~01LC]{stacks}). 
The proof of this last property 
reduces to the case of an inclusion of affine schemes $f:V \hookrightarrow U$, say with $U = \Spec(R)$ and $V = \Spec(S)$.  
There the assertion relies on the above fact that quasi-coherent sheaves on an affine scheme~$Y$ are of the form $\til M$ for some $\mc O(Y)$-module $M$, together with the fact that 
$S \otimes_R S = S$ if $\Spec(S) \hookrightarrow \Spec(R)$ is an open inclusion of affine schemes (i.e., $\mc O(V) \otimes_{\mc O(U)} \mc O(V) = \mc O(V)$ for $V \hookrightarrow U$ affine open).

In the situation of Noetherian formal schemes, by \cite[Lemma~01BK]{stacks}, 
every quasi-coherent sheaf is locally of the form $\mc F_M$ (notation as in Section~\ref{pushforwards}).  Moreover, it remains true that a sheaf of modules that is finite and quasi-coherent is coherent.  Namely, we have the following result.

\begin{prop} \label{coh qcoh fin}
If $\mf U$ is a Noetherian formal scheme, then a sheaf $\ms M$ of $\mc O_{\mf U}$-modules is coherent if and only if it is finite and quasi-coherent.
\end{prop}

\begin{proof}
The forward direction is clear, and so we prove the converse direction.  Since $\ms M$ is quasi-coherent, every point of $\mf U$ has a neighborhood $\mf V$ such that $\ms M|_{\mf V} = \mc F_M$ for some $\mc O_{\mf U}(\mf V)$-module $M$, by \cite[Lemma~01BK]{stacks}. 
After shrinking $\mf V$, we may assume that it is affine.  
Write $M$ as the direct limit of finitely generated $\mc O_{\mf U}(\mf V)$-modules $M_\alpha$.  Then $\mc F_M$ is the direct limit of the $\mc O_{\mf V}$-modules $\mc F_{M_\alpha} = M_\alpha^\Delta$, 
and by \cite[Proposition~10.10.2(i)]{EGA1} we have a canonical isomorphism $\mc F_{M_\alpha}(\mf V) = M_\alpha^\Delta(\mf V) \iso M_\alpha$, functorial in $M_\alpha$.  Taking direct limits yields an isomorphism $\mc F_M(\mf V) \iso M$.  Thus $M = \mc F_M(\mf V) = \ms M(\mf V)$ is a finite $\mc O_{\mf U}(\mf V)$-module, by the finiteness assumption on $\ms M$.   So $\ms M|_{\mf V} = \mc F_M = M^\Delta$ is coherent, by \cite[Th\'eor\`eme~10.10.5]{EGA1}.  Since coherence is a local property, $\ms M$ is coherent, as asserted.
\end{proof}

But in the formal scheme situation, the identity $\mc O(V) \otimes_{\mc O(U)} \mc O(V) = \mc O(V)$ 
no longer holds, and pushforward under open inclusions of formal schemes no longer necessarily preserves quasi-coherence, even in the case of the structure sheaf, as the next proposition shows.

\begin{prop} \label{quasicoh}
Let $R$ be a finitely generated algebra over a field $k$, let $r \in R$ be a non-zero non-unit, and let $S = R[1/r]$.  Let $A = R[[t]]$ and $B = S[[t]]$, and write $\mf U = \Spf(A)$ and $\mf V = \Spf(B)$, with inclusion $\wh f:\mf V \to \mf U$ induced by $f:\Spec(S) \to \Spec(R)$.  Then $\wh f_*\mc O_{\mf V}$ is not quasi-coherent on $\mf U$.
\end{prop}

\begin{proof}
Suppose that $\wh f_*\mc O_{\mf V}$ is quasi-coherent on $\mf U$.  Let $P$ be a point of $\mf U$ corresponding to a maximal ideal of $A$ containing $r$.  Thus as noted above, there is an affine neighborhood 
$\Spec(R')$ of $P$ in $U = \Spec(R)$ such that the restriction of
$\wh f_*\mc O_{\mf V}$ to $\Spf(A')$ is of the form $\mc F_M$ for some $A'$-module $M$, where $A' = R'[[t]]$.  After replacing $R,A$ by $R',A'$, and similarly replacing $S,B$, we may assume that $\wh f_*\mc O_{\mf V} = \mc F_M$ for some $A$-module $M$.  Since $\wh f_*\mc O_{\mf V}(\mf U) = \mc O(\mf V) = B$ by definition of pushforward, 
and since $\mc F_M(\mf U) = M$, we have that 
$M = B$, viewed as a module over $A = \mc O(\mf U)$. 
By \cite[Lemma~01BJ]{stacks}, 
$\wh f^*\mc F_B = \mc F_{B \otimes_A B}$,  
and so $\wh f^*\wh f_*\mc O_{\mf V}(\mf V) = \wh f^*\mc F_B(\mf V) = \mc F_{B \otimes_A B}(\mf V) = B \otimes_A B$.  
Meanwhile, $\wh f^*\wh f_*\mc O_{\mf V} = \mc O_{\mf V}$ since $V$ is open in $U$, and so $\wh f^*\wh f_*\mc O(\mf V) = \mc O(\mf V) = B$.  
But $\mc O(\mf V) \otimes_{\mc O(\mf U)} \mc O(\mf V) = B \otimes_A B$ is unequal to $\mc O(\mf V) = B$;
in fact, it is much larger (even though $S \otimes_R S = S$).  This is a contradiction, showing that $\wh f_*\mc O_{\mf V}$ is not quasi-coherent on $\mf U$.
\end{proof}

In light of the above discussion, the failure of pushforward to preserve quasi-coherence for formal schemes leads to the question of when the pushforward of a torsion-free coherent sheaf under $\wh f:\mf V \to \mf U$ is coherent, rather than merely finite, in the situation in which the complement has codimension two.
As shown in Examples~\ref{non-factor ex}-\ref{Srinivas ex} below, coherence does not always hold in this situation, and hence neither does quasi-coherence, in light of Proposition~\ref{coh qcoh fin} and Theorem~\ref{genl fin ext}.  

\begin{ex} \label{non-factor ex}
Let $k$ be a field of characteristic unequal to $2$, and let $\mf U = \Spf(k[x,y][[t]])$, with closed fiber $U = \mbb A^2_k$.  Let  $U_1$, $U_2$, $U_0$, and $V \subset U$ be the complements in $U$ of $x=0$, $y=0$, $xy=0$, and $x=y=0$, respectively; and let $\mf U_1$, $\mf U_2$, $\mf U_0$, and $\mf V$ be the associated formal subschemes of $\mf U$.  Write $f:V \to U$ and $\wh f:\mf V \to \mf U$ for the natural inclusions. 
Let $\ms L_1$ and $\ms L_2$ be free modules of rank one on $\mf U_1 = \Spf(k[x,x^{-1},y][[t]])$ and $\mf U_2 = \Spf(k[x,y,y^{-1}][[t]])$ with generators $m_1$ and $m_2$, respectively.  Fix some positive integer $n$, and consider the isomorphism of $\ms L_1$ and $\ms L_2$ over $\mf U_0 = \Spf(k[x,x^{-1},y,y^{-1}][[t]])$ given by $m_2=zm_1$,
where $z = (1+t^n/xy)^{1/2} \in \wh R_{U_0}^\times$.
These define a locally free coherent sheaf $\ms F_n$ of rank one on $\mf V$.  Then $\wh f_*\ms F_n$ is finite and torsion-free by Theorem~\ref{genl fin ext}; but we claim 
$\wh f_*\ms F_n$ is not coherent on $\mf U$ and moreover that $\wh f_*\ms F_n(U)=0$.  Note that this last equality implies that $\wh f_*\ms F_n$ is not coherent, since otherwise $\ms L_e(U_e) = \ms F_n(U_e) = \wh f_* \ms F_n(U_e) =  \wh f_* \ms F_n(U) \otimes_{\wh R_U} \wh R_{U_e} = 0$ for $e=1,2$ by Proposition~\ref{pushforward torsion-free}(b).  Thus it suffices to show that $\wh f_*\ms F_n(U)=0$.

So suppose that $s \in \wh f_*\ms F_n(U) = \ms F_n(V)$ is nonzero.  Then $s|_{U_1} = fm_1$ and $s|_{U_2} = gm_2$ for some nonzero $f \in \wh R_{U_1}$ and $g \in \wh R_{U_2}$.  On $U_0$, we have $fm_1 = s|_{U_0} = gm_2 = gzm_1$.  So $f=gz$, and $xyf^2=(xy+t^n)g^2 \in \wh R_{U_1} \cap \wh R_{U_2} = \wh R_U$ by Proposition~\ref{intersection}.
Now $xy+t^n$ is irreducible in $\wh R_{U_1}=k[x,x^{-1},y][[t]]$ and in $\wh R_{U_2}=k[x,y,y^{-1}][[t]]$ since as a power series in $t$ its constant term is a unit.  It is also irreducible in $\wh R_U = k[x,y][[t]]$, since otherwise there would be a nontrivial factorization  $xy+t^n = (\sum_0^\infty a_it^i)(\sum_0^\infty b_it^i)$ with $a_i,b_i \in k[x,y]$ and $a_0=x$, $b_0=y$.  But then, by equating the coefficients of $t^i$ on both sides, for $i<n$, induction on $i$ would yield that $x|a_i$ and $y|b_i$ for all $i<n$; and equating the coefficients of $t^n$ would give $1 \in (x,y) \subset k[x,y]$, a contradiction.  So indeed, $xy+t^n$ is irreducible in each of $\wh R_U, \wh R_{U_1}, \wh R_{U_2}$, say with associated discrete valuations $v,v_1,v_2$.  Here $v_e|_{\wh R_U} = v$ for $e=1,2$.  So $v(xyf^2) = v_1(xyf^2)$ is even, whereas $v((xy+t^n)g^2) = v_2((xy+t^n)g^2)$ is odd.  This contradiction proves the claim. 
\end{ex} 

The above example shows that there exists a sequence of torsion-free (and even locally free) coherent sheaves $\ms F_n$ on $\mf V$ such that $\wh f_*\ms F_n$ is not coherent on $\mf U$ (and moreover $\wh f_*\ms F_n(U)=0$) but such that $\ms F_n \to \mc O_{\mf V}$, in the sense that $\ms F_n$ is congruent to the structure sheaf $\mc O_{\mf V}$ modulo $t^n$.  Thus the set of torsion-free coherent sheaves $\ms F$ on $\mf V$ with $\wh f_*\ms F$ coherent is not open in the $t$-adic topology induced by congruence of sheaves modulo powers of $t$.

Note that if in the above example we replace $z = (1+t^n/xy)^{1/2}$ with $w = 1+t^n/xy$ then the pushforward would have nonzero sections; e.g., the section given by $(xy+t^n)m_1$ on $U_1$ and by $xym_2$ on $U_2$, since $xy+t^n = xyw$.  But the pushforward would still fail to be coherent.  Namely, if it were coherent, then by Proposition~\ref{Weil Cartier} it would be locally free of rank one, and hence free of rank one since $k[x,y][[t]]$ is a UFD 
by \cite[Theorem~20.8]{Mats89} (because $k[x,y]$ is a regular UFD).  One would then have a factorization $f=gw$ for some {\em units} $f \in \wh R_{U_1}^\times$ and $g \in \wh R_{U_2}^\times$; but this is impossible, as can be seen by examining the lowest degree terms of $f$ and $g$ that have denominators (powers of $x$ and $y$, respectively).  
Note also that this example generalizes to examples with rank one sheaves over higher dimensional formal schemes such as $\Spf(k[x,y,s_1,\dots,s_d][[t]])$.  

\begin{ex} \label{cone ex}
Define $\ms X \subset \mbb A^3_{k[[t]]}$ by $z^2=xy-t$, with closed fiber $X \subset \mbb A^3_k$ being the affine cone $z^2=xy$, and take the formal completion $\mf X$ along $X$.  Note that $\ms X$ is regular.  Consider the affine open subsets $U_1 \subset X$ where $x\ne 0$ and $U_2 \subset X$ where $y\ne 0$, and let $\mf U_1, \mf U_2$ be the formal completions of $\ms X$ along $U_1, U_2$. Let $V = U_1 \cup U_2$, having formal completion $\mf V$, with inclusion $f: V \hookrightarrow X$.  Let $\ms D$ be the (irreducible) divisor on $\ms X$ given by $z=0$, and let~$\mf D$ be its formal completion.  The reduction of $\mf D$ (or $\ms D$) modulo $t$ consists of two irreducible components; viz., the $x$-axis and $y$-axis in $\mbb A^3_k$.  
Thus $\mf D$ restricts to the trivial divisor on the formal completion $\mf U_0$ along $U_0 = U_1 \cap U_2$.  Let $\mf D_e$ be the restriction of $\mf D$ to $\mf U_e$, for $e=0,1,2$.  Let $\ms F$ be the invertible subsheaf of $\mc O_{\mf V}$ on $\mf V$ whose restriction to $\mf U_1$ is $\mc O_{\mf U_1}$ and whose restriction to $\mf U_2$ is $\mc O_{\mf U_2}(-\mf D_2)$.  (These two restrictions agree over $\mf U_0$ as subsheaves of the structure sheaf, since $\mf D_0$ is the trivial divisor.)  

To see that $\wh f_*\ms F$ is not coherent, first note that 
\[\wh f_*\ms F(\mf X) = \ms F(\mf U_1) \cap \ms F(\mf U_2) = \mc O_{\mf X}(\mf U_1) \cap z\mc O_{\mf X}(\mf U_2) \subseteq \mc O_{\mf X}(\mf U_1) \cap \mc O_{\mf X}(\mf U_2) = \mc O_{\mf X}(\mf X),\]
where the intersection takes place in $\mc O_{\mf X}(\mf U_0)$ and where the last equality is by Proposition~\ref{intersection}.  Since the contraction of the ideal $(z) \subset \mc O_{\mf X}(\mf U_2)$ to $\mc O_{\mf X}(\mf X)$ is the ideal $(z)$ in that ring, we have that 
$M:=\wh f_*\ms F(\mf X) =\mc O_{\mf X}(\mf U_1) \cap z\mc O_{\mf X}(\mf U_2) = z\mc O_{\mf X}(\mf X) = \mc O_{\mf X}(-\mf D)(\mf X)$.
So $\mc O_{\mf X}(-\mf D) =  M^\Delta$, the
unique formal coherent sheaf on $\mf X$ whose module of global sections is $M$. 
But $M^\Delta(\mf U_1) = \mc O_{\mf X}(-\mf D)(U_1)$, which is strictly smaller than $\ms F(\mf U_1)$ (viewing these as submodules of $\mc O_{\mf X}(\mf U_1)$).  So $\wh f_*\ms F$ is not the same subsheaf of $\mc O_{\mf X}$ as $M^\Delta$, and thus is it not coherent.
\end{ex}

The following example was shown to us by V.~Srinivas (prior to our finding the examples above).

\begin{ex} \label{Srinivas ex}
Let $k$ be an algebraically closed field, and $\ms X \to \Spec(k[[t]])$ a projective flat morphism of relative dimension two, with $X$ its special fiber. Assume $\ms X$ is regular, and that $X$ is
integral and normal. Let $U$ be the regular locus of $X$, with inclusion $f:U \hookrightarrow X$; this is a regular connected $k$-variety, smooth and quasi-projective over $k$.  Suppose further that $\Pic(U)$ has a non-zero torsion element of order
relatively prime to $\cha(k)$, not contained in the image
of the restriction map
$\Pic(X) \to \Pic(U)$, and let $\ms L$ be the corresponding invertible sheaf. (As a concrete example, we can take $\ms X \subset \mbb P^3_{k[[t]]}$ to be the cubic hypersurface $x^3+y^3+z^3+tw^3=0$, where $x,y,z,w$ are the homogeneous coordinates, and where $\cha(k) \ne 3$. Here $\Pic(X) \cong \mbb Z$, generated by the class of a hyperplane section; whereas $\Pic(U)$ is isomorphic to the Picard group of the Fermat curve, and so has many torsion elements.)

Let $\mf X$ and $\mf U$ be the formal completions of $\ms X$ along $X$ and $U$, respectively, and let $U_n$ be the reduction of $\mf U$ modulo $t^n$.  Thus the map  $\displaystyle \Pic(\mf U) \to \lim_{\leftarrow} \Pic(U_n)$ is an isomorphism, by \cite[Chapter~II, Exercise~9.6]{Hts}.  Moreover the maps $\Pic(U_n) \to \Pic(U_{n-1})$ restrict to isomorphisms on torsion subgroups of exponent prime to $\cha(k)$.  So we obtain a formal invertible sheaf $\wh {\ms L}$ on $\mf U$ whose restriction to $U$ is $\ms L$, and $\wh {\ms L}$ is a torsion element of $\Pic(\mf U)$ of the same order as $\ms L$ in $\Pic(U)$.  

If $\wh f_*\wh {\ms L}$ is coherent on $\mf X$, then by Proposition~\ref{Weil Cartier}, it is an invertible sheaf on $\mf X$ that restricts to $\wh {\ms L}$ on $\wh{\mf U}$.  Thus its reduction modulo $t$ is an invertible sheaf on $X$ that restricts to $\ms L$ on $U$.  That is, the class of $\ms L$ is in the image of $\Pic(X) \to \Pic(U)$.  This contradicts the choice of $\ms L$, and shows that in fact $\wh f_*\wh {\ms L}$ is not coherent on $\mf U$.  (Alternatively, as in Srinivas's original argument, one could invoke Grothendieck's Existence Theorem (\cite[Corollaire~5.1.6]{EGA3}) to obtain a coherent sheaf $\ms M$ on $\ms X$ that induces $\wh j_*\wh{\ms L}$; then show that $\ms M$ is necessarily reflexive of rank one and hence locally free by the regularity of $\ms X$; and then again obtain the contradiction that $\ms L$ is in the image of $\Pic(X) \to \Pic(U)$.)
\end{ex}

\begin{rem} \label{different RUs}
A variant of Example~\ref{Srinivas ex} shows that in Proposition~\ref{RU versions}, the injective map $\wh{\mc O}_{\ms X,U} \to \mc O_{\mf X}(U)$ need not be an isomorphism without the additional assumption in Proposition~\ref{RU versions}(b).
Namely, as in Example~\ref{Srinivas ex} take $\ms X \subset \mbb P^3_{k[[t]]}$ given by $x^3+y^3+z^3+tw^3=0$, and write $X$ for the closed fiber $(t=0)$.  Here~$X$ is the projective cone over the Fermat cubic curve $E$ given by $x^3+y^3+z^3=0$, which we can identify with the intersection of $X$ with the plane $w=0$ in $\mbb P^3_k$.  Let $P_0 \in E$ be the flex point $(-1:1:0)$; here $3P_0$ is cut out by the line $x+y=0$ in $\mbb P_k^2$.  With respect to the group law on $E$ in which $P_0$ is the identity element, pick a non-torsion point $P \in E$.  Let $L_0, L$ be the lines on $X$ that pass through the vertex and correspond to the points $P_0,P \in E$.  Thus $3L_0$ is the class of a hyperplane section on~$X$.
Since $P$ is a non-torsion point on $E$, no positive multiple of it is linearly equivalent on $E$ to a multiple of $P_0$; and so no positive multiple of $L$ is linearly equivalent to a multiple of $L_0$, or to a multiple of a hyperplane section.  Since $\Pic(X)$ is generated by the class of a hyperplane section, this implies that no positive multiple of the Weil divisor $L$ is a Cartier divisor on~$X$.  Now let $U_0 \subset X$ be the complement of $L$ in $X$; this is an affine open subset.  
If $f \in \mc O_{\ms X,U_0}$, then $f$ is a rational function on $\ms X$ whose pole divisor $D$ meets $X$ only along the irreducible Weil divisor $L$, and so $D|_X$ is a multiple of $L$.  Since $\ms X$ is regular, $D$ is a Cartier divisor; 
hence so is $D|_X$.
But no positive multiple of $L$ is a Cartier divisor, so $D$ is trivial.  Hence $f$ is a regular function on $\ms X$; i.e., $f \in T$.  This shows that 
$\mc O_{\ms X,U_0} = T$.  Hence the completion $\wh{\mc O}_{\ms X,U_0}$ is also $T$, whereas $\mc O_{\mf X}(U_0)$ is much larger.  Note also that there is no affine open subset $\ms U = \Spec(R) \subseteq \ms X$ such that $\ms U \cap X = U_0$, since otherwise $R \subseteq \wh{\mc O}_{\ms X,U_0} = T$, a contradiction.
\end{rem}

The next assertion provides conditions under which $\wh f_*\ms F$ is coherent, in terms of the reductions modulo powers of the uniformizer of $T$.  

\begin{thm} \label{coherent conditions}
Let $T$ be a complete discrete valuation ring with uniformizer $t$ and residue field $k$,
and let $\ms X$ be a quasi-projective normal integral $T$-scheme with  
reduced closed fiber~$X$ and formal completion $\mf X$.  Let 
$f:V \hookrightarrow U$ be an inclusion of connected open subsets of $X$
such that the complement of $V$ in $U$ has codimension at least two,
and write $\mf V, \mf U$ for the formal open subschemes of $\mf X$ associated to $V,U$.  
Let $f_n:V_n \to U_n$ be the reduction of $f$ modulo $t^n$, and let $\phi_n:U_n \to \mf U$ and $\omega_n:U_n \to U_{n+1}$ be the natural inclusion morphisms.
Let $\ms F$ be a coherent sheaf on $\mf V$, and write $F_n = \phi_n^* \ms F$.
\begin{enumerate} [(a)]
\item \label{surj coh}
Suppose that for every sufficiently large $n$, $(f_n)_*F_n$ is a coherent sheaf on $U_n$ and that $\omega_n^*(f_{n+1})_*F_{n+1} = (f_n)_*F_n$.   Then $\wh  f_* \ms F$ is a coherent sheaf on~$\mf U$.
\item \label{good reflexive}
Suppose that $\ms F$ is reflexive. 
Then $F_n$ is coherent and torsion-free on $V_n$, and 
$(f_n)_*F_n$ is a torsion-free coherent sheaf on $U_n$.  So $\wh f_* \ms F$ is coherent on $\mf U$ if $\omega_n^*(f_{n+1})_*F_{n+1} = (f_n)_*F_n$ for every sufficiently large $n$.  In this case $\wh f_* \ms F$ is also reflexive.
\item \label{loc free coh}
If $\ms F$ is locally free, then $\wh f_* \ms F$ is coherent and locally free on $\mf U$ if and only if $(f_n)_*F_n$ is a locally free module on $U_n$ for all sufficiently large $n$ (or equivalently, for all $n$).
\item \label{loc free rk 1}
If $\ms F$ is locally free of rank one, and $\ms X$ is regular, then $\wh f_* \ms F$ is coherent on $\mf U$ if and only if $(f_n)_*F_n$ is a locally free module on $U_n$ for all sufficiently large $n$ (or equivalently, for all $n$).  In this case, $\wh f_* \ms F$ is locally free on $\mf U$.
\end{enumerate}
\end{thm}

\begin{proof}
For part~(\ref{surj coh}), choose a positive integer $n_0$ such that $(f_n)_*F_n$ is a coherent sheaf and
$\omega_n^*(f_{n+1})_*F_{n+1} = (f_n)_*F_n$ for all $n \ge n_0$.  
Since $\ms F$ is coherent on $\mf V$, it is the inverse limit of the sheaves $F_n$, for $n \ge n_0$; and these sheaves are coherent on the reductions $V_n$ (see \cite[Th\'eor\`eme~10.11.3]{EGA1}).  So for every open subset $O \subseteq U$ we have $\displaystyle \wh f_*\ms F(O) = \ms F(O \cap V) = \lim_\leftarrow F_n(O \cap V) = \lim_\leftarrow (f_n)_* F_n(O)$; i.e., $\displaystyle \wh f_*\ms F = \lim_\leftarrow (f_n)_* F_n$, where the inverse limit ranges over $n \ge n_0$. Since  $\omega_n^*(f_{n+1})_*F_{n+1} = (f_n)_*F_n$ for $n \ge n_0$, $\displaystyle \wh f_*\ms F = \lim_\leftarrow (f_n)_* F_n$ is  a formal coherent sheaf (again by \cite[Th\'eor\`eme~10.11.3]{EGA1}).

Concerning part~(\ref{good reflexive}), this assertion is trivial if $\ms X$ lies over the closed point of $\Spec(T)$;  so we may assume that $\ms X$ dominates $\Spec(T)$.  Since $\ms X$ is integral, the uniformizer $t$ of $T$ is then not a zero divisor in the local rings of $\ms X$ at its closed points.  Since $\ms X$ is normal, the local rings $\mc O_{\ms X,P}$ satisfy Serre's condition $S_2$ (e.g., by \cite[17.I, Theorem~39]{Mats}).   That is, ${\rm depth}(\mc O_{\ms X,P}) \ge \min(2,\dim(\mc O_{\ms X,P}))$ for all $P \in X_n$, where $X_n$ is the reduction of $X$ modulo $t^n$.   Since $t$ is not a zero-divisor in the local rings, it follows that 
${\rm depth}(\mc O_{X_n,P}) = {\rm depth}(\mc O_{\ms X,P}) - 1 \ge \min(1,\dim(\mc O_{X_n,P})$.  So the local rings $\mc O_{X_n,P}$ satisfy Serre's condition~$S_1$; and in particular the local rings $\mc O_{U_n,P}$ satisfy $S_1$.  Thus $U_n$, or equivalently $\mc O_{U_n}$, has no embedded associated points, by \cite[Lemma~031Q]{stacks}. 

Meanwhile, since $\ms F$ is reflexive and since $\ms X$ satisfies condition $S_2$, so does $\ms F$, by \cite[Lemma~0EBA]{stacks}.  
So arguing as above, but for $\ms F$ rather than for the structure sheaf, we obtain that $F_n$ also has no embedded associated points.

As a consequence, the associated points of $U_n$ and $F_n$ are the same; viz., the generic points of the irreducible components of $U_n$.  So by \cite[Partie~4, Proposition~20.1.6]{EGA4}, $F_n$ is torsion-free.  It is also coherent, since $\ms F$ is.  So by Theorem~\ref{scheme pushforward} 
it follows that $(f_n)_*F_n$ is a torsion-free coherent sheaf on $U_n$.  
This proves the first assertion in part~(\ref{good reflexive}).
The second assertion in part~(\ref{good reflexive}) then follows from part~(\ref{surj coh}), and the last assertion follows from Proposition~\ref{pushforward reflex}(\ref{pushforward coh reflex}).

For part~(\ref{loc free coh}), we begin by proving the following claim: If $\ms M, \ms N$ are finite locally free coherent sheaves on $U_n$ for some $n$, with $\ms M$ a submodule of $\ms N$, and if the inclusion induces an isomorphism $f_n^*\ms M \to f_n^*\ms N$, then $\ms M = \ms N$.  To prove this, consider an affine open subset $O \subset U_n$ on which these modules are free, and choose bases for $\ms M(O)$ and $\ms N(O)$.  These are of the same size since $\ms M$ and $\ms N$ agree on $V_n$.
Let $A$ be the matrix whose columns give the coordinates of the basis elements of $\ms M(O)$ in terms of those of $\ms N(O)$.  Since $f_n^*\ms M \to f_n^*\ms N$ is an isomorphism, the determinant $\delta$ of this matrix is a unit in $\mc O_{U_n}(V_n \cap O)$.  Since the complement of $V_n \cap O$ has codimension at least two in $U_n \cap O$, and since a non-unit vanishes on a closed subset of codimension one, it follows that $\delta$ is a unit in $\mc O_{U_n}(U_n \cap O)$.  That is, $\ms M(O) = \ms N(O)$.  Since this is true for all sufficiently small affine open subsets $O \subseteq U_n$, the claim follows.

We now use this claim to prove part~(\ref{loc free coh}).  First suppose that $(f_n)_*F_n$ is a locally free coherent sheaf on $U_n$ for all sufficiently large $n$, say $n \ge n_0$.  For each such $n$, the pullback $\omega_n^*\bigl((f_{n+1})_*F_{n+1})\bigr)$ is a locally free sheaf on $U_n$, since 
$(f_{n+1})_*F_{n+1}$ is locally free.
Now $\omega_n f_n=f_{n+1}\nu_n$, where $\nu_n$ is the restriction of $\omega_n$ to $V_n$.  So
$f_n^*\omega_n^*(f_{n+1})_*F_{n+1} = \nu_n^* f_{n+1}^*(f_{n+1})_*F_{n+1}
= \nu_n^*F_{n+1}=F_n = f_n^*(f_n)_*F_n$.  Since $\omega_n^*(f_{n+1})_*F_{n+1}$ is a subsheaf of $(f_n)_*F_n$ and both are locally free, the claim implies that these sheaves are equal.  Hence $\wh f_*\ms F$ is a coherent sheaf on $\mf U$ by part~(\ref{surj coh}).  Moreover $\wh f_*\ms F$ is locally free (or equivalently, flat), because it is so on stalks, by \cite[Lemma~0523]{stacks}.

For the other direction, suppose that $\wh f_*\ms F$ is a locally free coherent sheaf on $\mf U$. 
The inclusion $\phi:U_n \to \mf U$ induces morphisms $\phi^*:\ms F \to F_n$ and $\phi^*:\wh f_*\ms F \to (f_n)_*F_n$.
For every~$n$, the pullback $\phi_n^*\wh f_*\ms F$ 
of $\wh f_*\ms F$
under $\phi:U_n \to \mf U$ is a locally free coherent sheaf on $U_n$ whose pullback under $f_n:V_n \to U_n$ is $f_n^* \phi_n^*\wh f_*\ms F = \phi_n^*\wh f^*\wh f_*\ms F = \phi_n^*\ms F = F_n$.  Thus $\phi_n^*\wh f_*\ms F$ is a locally free sheaf on $U_n$ that has the same restriction $F_n$ to $V_n$ as $(f_n)_*F_n$.  But 
$\phi_n^*\wh f_*\ms F$ is a subsheaf of $(f_n)_*F_n$.
Hence $\phi_n^*\wh f_*\ms F = (f_n)_*F_n$ by the above claim.  Thus $(f_n)_*F_n$ is locally free for all $n$ (and in particular, for all sufficiently large $n$).  This completes the proof of part~(\ref{loc free coh}).

Part~(\ref{loc free rk 1}) now follows from part~(\ref{loc free coh}) together with Proposition~\ref{Weil Cartier}.
\end{proof}

\begin{rem}
The above theorem provides another way of verifying that $\wh f_*\ms F$ is not coherent in Example~\ref{cone ex}.  Namely, consider the reduction $F_n$ of $\ms F$ modulo $t^n$, and its pushforward $(f_n)_*F_n$ to the reduction of $\ms X$ modulo $t^n$.  The global sections of this pushforward form a module that is generated by the elements $\{z,x^n,x^{n-1}t,\dots,xt^{n-1}\}$; and this module is not locally free for $n>1$.  So by Theorem~\ref{coherent conditions}(\ref{loc free rk 1}), $\wh f_*\ms F$ is not coherent.
\end{rem}

The hypotheses in Theorem~\ref{coherent conditions} each involve infinitely many conditions, on the reductions of the sheaf modulo $t^n$ for all sufficiently large $n$.  In the affine case of a reflexive sheaf, we can state hypotheses that involve just the formal sheaf itself.  This is done in the following two corollaries.

\begin{cor} \label{H1 tor free cor}
Under the hypotheses of Theorem~\ref{coherent conditions}, 
assume that $U$ is affine and that $\ms F$ is reflexive.  If $H^1(\mf V,\ms F)$ is $t$-torsion-free, then $\wh f_*\ms F$ is a reflexive coherent sheaf on $\mf U$.
\end{cor}

\begin{proof}
We preserve the notation of Theorem~\ref{coherent conditions}, with $\omega_n: U_n \to U_{n+1}$ and $F_n=\omega_n^*\ms F$ as before.  By Theorem~\ref{coherent conditions}(\ref{good reflexive}), to prove the corollary it suffices to show that $\omega_n^*(f_{n+1})_*F_{n+1} = (f_n)_*F_n$ for all $n \ge 1$.  This is immediate if $\ms X$ lies over the closed point of $\Spec(T)$.  So we may assume that the integral scheme $\ms X$ dominates $\Spec(T)$, and hence $t$ is a regular element of $\wh R_U$.  By Theorem~\ref{coherent conditions}(\ref{good reflexive}), $(f_n)_*F_n$ is coherent on the affine scheme $U_n$; hence $(f_n)_*F_n = \til M_n$, where $M_n = (f_n)_*F_n(U_n) = F_n(V_n)$, for all $n$.   

Since $\ms F$ is reflexive, it is torsion-free, and in particular $t^n$-torsion-free for all $n \ge 1$, since~$t$ is regular in $\wh R_U$.  
Hence we have short exact sequences
$0 \to \ms F \buildrel{\cdot t^n} \over {\rightarrow} \ms F \to \ms F/t^n\ms F \to 0$ of sheaves on $\mf V$.  These yield
long exact sequences that begin
\[0 \to \ms F(V) \buildrel{\cdot t^n} \over {\rightarrow} \ms F(V) \to (\ms F/t^n\ms F)(V) \to  H^1(\mf V,\ms F)  \buildrel{\cdot t^n} \over {\rightarrow} H^1(\mf V,\ms F).\]  
By hypothesis, multiplication by $t$ on $H^1(\mf V,\ms F)$ is injective, and hence so is multiplication by $t^n$. 
Thus for every $n \ge 1$ the above long exact sequence yields an isomorphism 
\[\ms F(V)/t^n\ms F(V) \iso (\ms F/t^n\ms F)(V) = \phi_n^* \ms F(V_n) = F_n(V_n) = M_n.\]
We then have  
$\omega_n^*(f_{n+1})_*F_{n+1} = \omega_n^* \til M_{n+1} = \til M_{n+1,n}$, where $M_{n+1,n} = 
M_{n+1}/t^nM_{n+1}$.  Here $M_{n+1,n} 
= (\ms F(V)/t^{n+1}\ms F(V))/(t^n\ms F(V)/t^{n+1}\ms F(V)) =  \ms F(V)/t^n\ms F(V) = M_n$.
Thus we have 
$\omega_n^*(f_{n+1})_*F_{n+1} = \til M_n = (f_n)_*F_n$, as needed.
\end{proof}

\begin{cor} \label{Cech H1 cor}
Under the hypotheses of Theorem~\ref{coherent conditions}, 
assume that $U$ is affine and that $V = U_1 \cup U_2$, where $U_1$ and $U_2$ are affine open subsets of $X$.  Let $U_0 = U_1 \cap U_2$, assume that $\ms F$ is reflexive, and define $\psi: \ms F(U_1) \times \ms F(U_2) \to \ms F(U_0)$ by $\psi(f,g) = f-g$.  If 
\[t\ms F(U_0) \cap \im(\psi) = t \im(\psi),\]
then $\wh f_*(\ms F)$ is a reflexive coherent sheaf.
\end{cor}

\begin{proof}
First note that since $\ms F$ is reflexive, it is torsion-free.  Hence the natural maps $\ms F(U_e) \to \ms F(U_0)$ are injective, for $e=1,2$, by Lemma~\ref{module inj} and Proposition~\ref{pushforward torsion-free}(b).  Thus we may regard $\ms F(U_e) \subseteq \ms F(U_0)$ for $e=1,2$; and the map $\psi$ is well defined.  

By Corollary~\ref{H1 tor free cor}, it suffices to show that $H^1(\mf V,\ms F)$ is $t$-torsion-free.  Since each $U_e$ is affine, and since $\ms F$ is coherent, it follows that $\ms F$ is acyclic on the formal completion $\mf U_e$ of~$\ms X$ along~$U_e$ (e.g., by \cite[Proposition~5.2.3]{EGA3}, taking $\mf X = \mf Y = \mf U_e$ there).  Hence $H^1(\mf V,\ms F)$ is equal to the \v{C}ech cohomology group $\check{H}^1(\ms U,\ms F)$, where $\ms U$ is the affine open covering $\{U_1,U_2,U_0\}$ of~$V$. So it suffices to show that $\check{H}^1(\ms U,\ms F)$ is $t$-torsion-free.

Let $\alpha \in \check{H}^1(\ms U,\ms F)$ and suppose that $t\alpha = 0 \in \check{H}^1(\ms U,\ms F)$.  Thus $\alpha$ is the cohomology class of some element $f \in \ms F(U_0)$.  Since $t\alpha = 0$, we have that $tf$ is a coboundary; i.e., there exist $g_e \in \ms F(U_e)$ for $e=1,2$ such that $tf=g_1-g_2$.  
Thus $tf \in t\ms F(U_0) \cap \im(\psi)  = t \im(\psi)$.  That is, there exists $f' \in \im(\psi)$ such that $tf = tf' \in \ms F(U_0)$.  Since $\ms F$ is torsion-free, $f=f' \in \im(\psi)$.  Thus $f = g_1' - g_2'$ where $g_e' \in \ms F(U_e)$ for $e=1,2$; i.e., $f$ is a coboundary.  Thus $\alpha = 0$, as needed.
\end{proof}

Although the conditions in parts~(\ref{loc free coh}) and~(\ref{loc free rk 1}) 
of Theorem~\ref{coherent conditions}, 
are both necessary and sufficient, the conditions in parts~(\ref{surj coh}) and~(\ref{good reflexive}) of the theorem are sufficient to guarantee coherence of $\wh f_*\ms F$ but are not necessary (and hence the same holds for the above two corollaries, since they prove coherence of $\wh f_*\ms F$ via Theorem~\ref{coherent conditions}(\ref{good reflexive})).  
Namely, as the following example shows, in the situation of Theorem~\ref{coherent conditions} there exist coherent sheaves $\ms F$ on $\mf V$ such that $\wh f_*\ms F$ is coherent although the morphisms $\omega_n^*(f_{n+1})_*F_{n+1} \to (f_n)_*F_n$ are not surjective; and $\ms F$ can even be chosen so as to be locally free (and in particular reflexive).

\begin{ex} \label{reflexive converse counterex}
Let $\mf V \subseteq \mf U$ be as in Example~\ref{non-factor ex},
and preserve the notation $f_n: V_n \to U_n$ and $\omega_n$ from Theorem~\ref{coherent conditions}.  Let $M$ be the finite $k[x,y][[t]]$-module having generators $m_1,m_2,m_3$ subject to the relation $xm_1+ym_2+tm_3=0$.  
Thus $\ms G := M^\Delta$ is coherent.  Moreover the restrictions of $\ms G$ to the open sets where $x \ne 0$ and where $y \ne 0$ are each free, since $M$ induces free modules over $k[x,y,x^{-1}][[t]]$ and over $k[x,y,y^{-1}][[t]]$.  Hence the pullback $\ms F = \wh f^* \ms G$ is coherent 
and locally free on $\mf V$, and thus reflexive and torsion-free.
By Theorem~\ref{reflexive pull push},
$\wh f_*\ms F = \wh f_*\wh f^* \ms G$ is coherent and reflexive on $\mf U$.  
Meanwhile, by Theorem~\ref{coherent conditions}(\ref{good reflexive}), 
$F_n$ and $(f_n)_*F_n$ are coherent and torsion-free on $V_n$ and $U_n$, respectively.
But $\omega_n^*(f_{n+1})_*F_{n+1} \to (f_n)_*F_n$ is not surjective for any $n$.  Namely, for every $n$ let $U_{1,n}, U_{2,n}, U_{0,n}$ be the complements of $x=0, y=0, xy=0$, respectively, in $U_n$.  Since $t^{n-1}xm_1+t^{n-1}ym_2+t^nm_3=0$, and since~$F_n$ is torsion-free, the elements $t^{n-1}x^{-1}m_2 \in F_n(U_{1,n})$ and $-t^{n-1}y^{-1}m_1 \in F_n(U_{2,n})$ induce the same element in $F_n(U_{0,n})$ and hence define an element $s_n \in F_n(V_n) = (f_n)_*F_n(U_n)$.  Here $s_n$ maps to $0$ in $(f_{n-1})_*F_{n-1}(U_{n-1})$.  Also,
since $1/xy$ is not a $k[x,y]$-linear combination of $1/x$ and $1/y$, $(-t^{n-1}y^{-1}m_1) - (t^{n-1}x^{-1}m_2) = x^{-1}y^{-1}t^nm_3 \in t^nF_{n+1}(U_{0,n+1})$ is not the sum of elements of $t^nF_{n+1}(U_{1,n+1})$ and $t^nF_{n+1}(U_{2,n+1})$; and hence
$s_n$ is not the image of an element of $(f_{n+1})_*F_{n+1}(U_{n+1})$. 
Thus $\omega_n^*(f_{n+1})_*F_{n+1}(U_{n+1}) \to (f_n)_*F_n(U_n)$ is not surjective, and so  
$\omega_n^*(f_{n+1})_*F_{n+1} \to (f_n)_*F_n$ is not an isomorphsm of sheaves.
This shows
that the converses to Theorem~\ref{coherent conditions}(\ref{surj coh},\ref{good reflexive}) do not hold.   
\end{ex}

Thus, as Example~\ref{reflexive converse counterex} shows, 
there are more coherent sheaves $\ms F$ on $\mf V$ such that $\wh f_*\ms F$ is coherent on $\mf U$ than are guaranteed by Theorem~\ref{coherent conditions}.  On the other hand, as 
Examples~\ref{non-factor ex}-\ref{Srinivas ex} above show,
there are also torsion-free coherent sheaves $\ms F$ on $\mf V$ such that $\wh f_*\ms F$ is not coherent on~$\mf U$.  In light of this, 
it would be desirable to find general necessary and sufficient conditions for the pushforward $\wh f_*(\ms F)$ to be coherent.

\section{Patching problems} \label{p problems}

Given a ring $R$ and overrings $R_0,R_1,R_2 \supseteq R$ with $R_1,R_2 \subseteq R_0$ and $R = R_1 \cap R_2 \subseteq R_0$, a {\it patching problem} for these rings consists of finitely generated $R_e$-modules $M_e$ for $e=0,1,2$, together with isomorphisms $\alpha_e:M_e \otimes_{\wh R_e} R_0 \to M_0$ for $e=1,2$.  A {\it solution} to the patching problem consists of a finitely generated $R$-module $M$ together with isomorphisms $\gamma_e:M \otimes_R R_e \to M_e$, for $e=1,2$, such that the diagram
    $$
	\xymatrix{
	& (M \otimes_R R_1) \otimes_{R_1} R_0 \ar^-{\gamma_1 \otimes {\rm id}}[r] & M_1 \otimes_{R_1} R_0 \ar_{\alpha_1}^-{\cong}[rd] & \\
	M \otimes_R R_0 \ar^-{\cong}[ru] \ar_-{\cong}[rd] & & &  M_0 \\
	& (M \otimes_R R_2) \otimes_{R_2} R_0 \ar^-{\gamma_2 \otimes {\rm id}}[r] & M_2 \otimes_{R_2} R_0 \ar^{\alpha_2}_-{\cong}[ru] &
	}
	$$
commutes.
That is, the $R$-module $M$ induces the modules $M_e$, for $e=1,2$, compatibly with the maps $\alpha_e$.

For example, let $U = \Spec(R)$ be an affine scheme with affine dense open subsets $U_e=\Spec(R_e)$ for $e=0,1,2$, such that $U_0 = U_1 \cap U_2$ and $U = U_1 \cup U_2$.  Thus $R = R_1 \cap R_2 \subseteq R_0$.  In this situation,
every patching problem for the rings $R,R_0,R_1,R_2$ has a solution, by Zariski patching (gluing) of coherent sheaves and the correspondence between coherent sheaves on an affine scheme $\Spec(A)$ and finitely presented $A$-modules (e.g., see \cite[Lemmas~00AN, 01I9(1), 01IA]{stacks}). 

An analogous statement holds for formal schemes.  Namely, let $\mf X$ be a formal scheme with reduced closed fiber $X$, and let $U,U_0,U_1,U_2$ be affine dense open subsets of $X$ such that $U_0 = U_1 \cap U_2$ and $U = U_1 \cup U_2$.  Let $\mf U, \mf U_e$ be the formal schemes associated to $U, U_e$ (i.e., the restrictions of $\mf X$ to those subsets).  As before, we write $\wh R_U = \mc O_{\mf X}(U)$ and $\wh R_{U_e} = \mc O_{\mf X}(U_e)$ for $e=0,1,2$.  Then 
$\wh R_U = \wh R_{U_1} \cap \wh R_{U_2} \subseteq \wh R_{U_0}$ since $\mc O_{\mf X}$ is a sheaf; and 
every patching problem for the rings $\wh R_U, \wh R_{U_0}, \wh R_{U_1}, \wh R_{U_2}$ has a solution.  To see this, recall that by 
\cite[Proposition~10.10.5]{EGA1}, every coherent sheaf on $\mf U$ is of the form $M^\Delta$ for some finitely generated $\wh R_U$-module, and similarly for each $\mf U_e$.  Since the underlying space $U$ of $\mf U$ is the union of the underlying spaces of $\mf U_1, \mf U_2$, with intersection being the underlying space of $\mf U_0$, a coherent sheaf on $\mf U$ is given by finitely generated modules over $\wh R_{U_1}, \wh R_{U_2}$ together with an agreement over $\wh R_{U_0}$.  Hence every patching problem for the rings $\wh R_U,\wh R_{U_0},\wh R_{U_1},\wh R_{U_2}$ has a solution.  Moreover, in these two situations (schemes and formal schemes), the solution is unique up to isomorphism, because there is an equivalence of categories between patching problems and finitely generated modules over the base ring ($R$ or $\wh R_U$, respectively).

The next result considers patching problems for torsion-free coherent formal sheaves, where $U_1 \cup U_2$ is strictly contained in $U$, with complement having codimension at least two.

Recall the situation of Proposition~\ref{patch 2}:  We have a complete discrete valuation ring $T$ 
with uniformizer $t$, and a normal integral $T$-scheme $\ms X$ of finite type.
We consider  
affine open subsets $U_0,U_1,U_2,U$ of the reduced closed fiber $X$ of $\ms X$, with $U_e$ a dense subset of $U$ for $e=1,2$, and with $U_0 = U_1 \cap U_2$, 
such that the complement of $W := U_1 \cup U_2$ in $U$ has codimension at least two.  
In this situation, we still have 
$\wh R_U = \wh R_{U_1} \cap \wh R_{U_2} \subseteq \wh R_{U_0}$, by Proposition~\ref{intersection}.
Let $M_e$ be a finitely generated torsion-free $\wh R_{U_e}$-module for $e=0,1,2$.  For $e=1,2$, 
we consider the natural map $\iota_e:M_e \to M_e \otimes_{\wh R_{U_e}} \wh R_{U_0}$, 
which is injective by Lemma~\ref{module inj}; and
we let $\alpha_e:M_e \otimes_{\wh R_{U_e}} \wh R_{U_0} \to M_0$ be an isomorphism. Then $\alpha_e\iota_e:M_e \to M_0$ is injective, mapping $M_e$ isomorphically onto its image in $M_0$.  As in Proposition~\ref{patch 2}, the intersection $M := \alpha_1\iota_1(M_1) \cap \alpha_2\iota_2(M_2) \subseteq M_0$ is a finitely generated torsion-free $\wh R_U$-module.

This situation can be reinterpreted in terms of formal sheaves.  As above, let $\fU$, $\fU_e$ be the formal schemes associated to $U$, $U_e$ (for $e=0,1,2$), with inclusions $\wh f_e:\fU_e \to \fU$.  Also let $\mf W$ be the formal scheme associated to $W = U_1 \cup U_2$, with inclusion $\wh f:\mf W \to \fU$ induced by the inclusion $f:W \to U$.  As before write $M_e^\Delta$ for the coherent formal sheaf on $\fU_e$ associated to $M_e$.  The isomorphisms $\alpha_e$ induce isomorphisms $M_e^\Delta \otimes_{\mc O_{\fU_e}}  \mc O_{\fU_0} \to M_0^\Delta$, and so by \cite[Lemma~00AM]{stacks} 
we may glue the sheaves $M_e^\Delta$ to obtain a torsion-free coherent sheaf of modules $\ms N$ on $\mf W$, which we call the sheaf {\it associated to} the given patching problem.

\begin{prop} \label{pp sheaf bij}
Given a patching problem $(M_0,M_1,M_2;\alpha_1,\alpha_2)$ as above, there is a bijection from the set of isomorphism classes of torsion-free solutions to the patching problem to the set of isomorphism classes of torsion-free coherent sheaves on $\mf U$ whose pullback to $\mf W$ is isomorphic to the associated sheaf $\ms N$, given by $S \mapsto S^\Delta$.   Under this bijection, reflexive (resp., locally free) solutions correspond to reflexive (resp., locally free) coherent sheaves.
\end{prop}

\begin{proof}
Suppose that $S$ is a finite $\wh R_U$-module that is a torsion-free solution to the given patching problem, with associated torsion-free coherent sheaf $S^\Delta$ on $\mf U$.  
Thus we have isomorphisms $\gamma_e:S \otimes_{\wh R_U} \wh R_{U_e} \to M_e$
for $e=1,2$ that are compatible with the isomorphisms $\alpha_e$ (in the sense discussed above).  By \cite[Proposition~10.10.8]{EGA1}, for every affine open subset $g:O \hookrightarrow U$ we have a functorial isomorphism $\wh g^*(S^\Delta) \iso (S \otimes_{\wh R_U} \wh R_O)^\Delta$.  Applying this to $O=U_e$ for $e=0,1,2$, we find that $S^\Delta$ compatibly pulls back to the sheaves $M_e^\Delta$ on the open sets $U_e \subseteq U$, and hence $\ms N = \wh f^*(S^\Delta)$.  Thus the  functor $S \mapsto S^\Delta$ defines a map between the two sets.  It is injective since $S^\Delta(U)=S$.  It remains to show that it is surjective.  

So let $\ms S$ be a coherent sheaf on $\mf U$ such that $\ms N = \wh f^*\ms S$.  Thus $\ms S = S^\Delta$, where $S = \ms S(U)$.  For $e=0,1,2$ we have isomorphisms $M_e = \ms N(U_e) \iso (S \otimes_{\wh R_U} \wh R_{U_e})^\Delta(U_e) =  S \otimes_{\wh R_U} \wh R_{U_e}$ that are compatible with the isomorphisms $\alpha_e$, again by \cite[Proposition~10.10.8]{EGA1}.  Hence $S$ is a solution to the given embedding problem.

The last assertion follows from the fact that an $\wh R_U$-module $S$ is reflexive (resp., locally free) if and only if the coherent sheaf $S^\Delta$ is.
\end{proof}

In the above situation, the solution to the given patching problem need not be unique if $U_1 \cup U_2$ is strictly contained in $U$; viz., the solution is not determined on the codimension two complement.  This is illustrated by the following, which draws on Example~\ref{strict containment ex}.

\begin{ex}
In the notation of Example~\ref{strict containment ex}, let $U_0 = U_1 \cap U_2$, and let $M_e=\wh R_{U_e}$ for $e=0,1,2$, with associated isomorphism $\alpha_e:M_e \otimes_{\wh R_{U_e}} \wh R_{U_0} \to M_0$ for $e=1,2$.  Then both $M = \wh R_U = k[x,y][[t]]$ and the ideal $I=(x,y) \subset \wh R_U$ are solutions to the patching problem, with $I$ strictly contained in $M$.  Here 
$I$ is torsion-free but not reflexive; whereas $M$, which is the module given in Proposition~\ref{patch 2}, is reflexive (in fact, free).
Note also that the $\wh R_U$-module $M = \alpha_1\iota_1(M_1) \cap \alpha_2\iota_2(M_2) \subseteq M_0$ 
is equal to $\wh f_*\ms N(U)$, where $\ms N$ is the coherent sheaf on $\mf W$ associated to the given patching problem, with $\mf W$ the formal scheme associated to $W = U_1 \cup U_2$ and with 
$f:W \hookrightarrow U$ the natural inclusion.
\end{ex}

More generally, we have the following result.

\begin{cor} \label{max soln pp}   
In the notation as above and in Proposition~\ref{patch 2},
suppose that $\ms X$ is quasi-projective over $T$, and consider the patching problem given by 
$(M_0,M_1,M_2;\alpha_1,\alpha_2)$.  
\begin{enumerate} [(a)]
\item \label{max tor free}
If the above patching problem has a torsion-free solution, then 
the $\wh R_U$-module $M := \alpha_1\iota_1(M_1) \cap \alpha_2\iota_2(M_2) \subseteq M_0$, together with the maps $\gamma_e$ for $e=1,2$, defines the maximum torsion-free solution to this patching problem.
\item \label{reflexive soln}
If the patching problem has a reflexive solution, then that solution is necessarily isomorphic to the above module $M$ (and so then there is a unique reflexive solution).
\end{enumerate}
\end{cor}

\begin{proof}
Let $\mf U$ and $\mf W$ be the formal completions along $U$ and $W = U_1 \cup U_2$.
Since the patching problem has a torsion-free solution, by Proposition~\ref{pp sheaf bij} there is a torsion-free coherent sheaf~$\ms H$ on $\mf U$ whose pullback to $\mf W$ is the torsion-free coherent sheaf $\ms N$ associated to the patching problem.  By Proposition~\ref{max coh}, the maximal such sheaf $\ms G$ on $\mf U$ is given by $\ms G = \ms N(V)^\Delta$.  Here $\ms N(V) = \alpha_1\iota_1(M_1) \cap \alpha_2\iota_2(M_2) = M$; i.e., $\ms G = M^\Delta$.  By Proposition~\ref{pp sheaf bij}, $M$ is the maximal solution to the patching problem.  This proves part~(\ref{max tor free}).

For part~(\ref{reflexive soln}), suppose that $M'$ is a reflexive solution to the given patching problem.  
Thus $M'^\Delta$ is a reflexive coherent sheaf on $\mf U$,
and $\wh f^* (M'^\Delta) = \ms N$.
By Corollary~\ref{reflexive pp cor},  $M'^\Delta = \wh f_*\wh f^* (M'^\Delta) = 
\wh f_*\ms N$,
so $M' = M'^\Delta(U)=\wh f_*\ms N(U) = \ms N(V) =M$, and $M$ is reflexive.
\end{proof}

Note that the maximality condition in Corollary~\ref{max soln pp}(\ref{max tor free})
can fail without the torsion-free hypothesis.
For example, any torsion $\wh R_U$-module that is supported on the complement of $W$ in $U \subset \Spec(\wh R_U)$ 
is a solution to the trivial patching problem (i.e., the one defined by the zero modules over the rings 
$\wh R_{U_e}$).  This observation is consistent with Corollary~\ref{max soln pp}(\ref{reflexive soln}), since every reflexive module is torsion-free.

In the situation discussed prior to Proposition~\ref{pp sheaf bij}, for each $n \ge 1$ we may reduce the given patching problem $(M_0,M_1,M_2;\alpha_1,\alpha_2)$ modulo $t^n$.  We thus obtain a patching problem $(M_{0,n},M_{1,n},M_{2,n};\alpha_{1,n},\alpha_{2,n})$ for the reductions 
modulo $t^n$; i.e., over the rings $R_n = \wh R_U/(t^n)$ and $R_{e,n} = \wh R_{U_e}/(t^n)$.
We say that a system of solutions $(N_n)_{n\ge 1}$ to these respective patching problems 
is {\it compatible} if for each $n$ 
the isomorphisms $M_{e,n+1}\otimes_{R_{n+1}} R_n \to M_{e,n}$, for $e=0,1,2$, restrict to a common isomorphism $N_{n+1} \otimes_{R_{n+1}} R_n \to N_n$.

We say that a patching problem $(M_0,M_1,M_2;\alpha_1,\alpha_2)$ is {\it reflexive} (resp.\ {\it locally free}) if each $M_e$ is a module with that property.

\begin{prop} \label{pp reductions}
Suppose we are given a reflexive patching problem in the situation discussed prior to Proposition~\ref{pp sheaf bij}, and assume that $\ms X$ satisfies Serre's condition $S_3$ (e.g., $\ms X$ is Cohen-Macaulay).  
\begin{enumerate} [(a)]
\item \label{reflex pp}
If the reductions of the patching problem mod $t^n$ have compatible reflexive solutions for all sufficiently large $n$, then the given patching problem has a reflexive solution.
\item \label{loc free pp}
If the given patching problem is locally free, then it has a locally free solution if and only if the reductions of the patching problem mod $t^n$ have locally free solutions for all sufficiently large $n$ (or equivalently, for all $n$).
\end{enumerate}
\end{prop}

\begin{proof}
With notation as in the above discussion, write $R_n = \wh R_U/(t^n)$, $R_{e,n} = \wh R_{U_e}/(t^n)$, and $M_{e,n} = M_e/t^nM_e$, with associated maps $\iota_{e,n}, \alpha_{e,n}$ induced by $\iota_e, \alpha_e$ modulo $t^n$.  Let $\ms F$ be the coherent sheaf on $\mf W$ given by $M_e^\Delta$ on $\mf U_e$ for $e=0,1,2$, together with the isomorphisms on $\mf U_0$ as in the patching problem.  Let $W_n$ and $U_n'$ be the reductions of $\mf W$ and $\mf U$ modulo $t^n$, with inclusion $f_n:W_n \to U_n'$.  Thus $U_n' = \Spec(R_n)$.  Write $F_n$ for the reduction of $\ms F$ modulo $t^n$; this is a coherent sheaf on $U_n'$, and it is the sheaf defined by the coherent sheaves $\til M_{e,n}$ on $R_{e,n}$ and the maps induced by $\alpha_{e,n}$.

The assertion of the proposition is trivial if $\ms X$ is supported over the closed point of $\Spec(T)$; so we may assume otherwise.  Since $\ms X$ is integral, it follows that the uniformizer~$t$ of $T$ is not a zero-divisor in the local rings of $\ms X$, and hence those of $\wh R_U$.  Also, the localizations $\wh R_{U,\frak p}$ of $\wh R_U$ satisfy  ${\rm depth}(\wh R_{U,\frak p}) \ge \min(3,\dim(\wh R_{U,\frak p}))$ since $\ms X$ and hence $\wh R_U$ is $S_3$.  Identifying a prime ideal $\frak p \in \Spec(\wh R_U)$ containing $t$ with its pullback to $\Spec(R_n)$, we have 
${\rm depth}(R_{n,\frak p}) = {\rm depth}(\wh R_{U,\frak p}) - 1 \ge \min(2,\dim(\wh R_{U,\frak p})-1) = \min(2,\dim(R_{n,\frak p}))$; i.e., $R_n$ (or equivalently, $U_n'$) satisfies the~$S_2$ condition.

If $H_n$ is any reflexive sheaf on $U_n'$, then $H_n$ satisfies the $S_2$ condition since $U_n'$ satisfies~$S_2$ (see \cite[Partie~2, Th\'eor\`eme~5.10.5]{EGA4}).  
That is, for each point $P$ of $U_n'$ we have ${\rm depth}((H_n)_P) \ge \min(2,\dim(\mc O_{U_n',P}))$.  Since the complement $Z_n$ of $W_n$ in $U_n'$ has codimension at least two, 
$\dim(\mc O_{U_n',P}) \ge 2$ for every $P \in Z_n$; and so ${\rm depth}((H_n)_P) \ge 2$ for each such point.  That is, ${\rm depth}_Z(H_n) := \inf_{P \in Z} {\rm depth}_{\mc O_{U_n',P}}((H_n)_P) \ge 2$.  By Th\'eor\`eme~5.10.5 in \cite[Partie~2]{EGA4}, this inequality implies that $H_n$ is $Z$-closed; i.e., the natural map $H_n \to (f_n)_*f_n^* H_n$ is an isomorphism (see \cite[Partie~2, Section~5.9]{EGA4}, especially D\'efinition~5.9.9, for a discussion of the $Z$-closed condition).  So if in addition $f_n^*H_n=F_n$, then we get an identification $H_n = (f_n)_* F_n$.  Thus, if some reflexive sheaf on $U_n'$ pulls back to $F_n$, then that sheaf is unique up to isomorphism; viz., it is the sheaf $(f_n)_* F_n$.

Now for part~(\ref{reflex pp}), let $M_n'$ be a reflexive solution to the reduction of the given patching problem modulo $t^n$, for $n\gg 0$. Thus $G_n := \til M_n'$ is a reflexive coherent sheaf on $U_n'$, and $f_n^* G_n = F_n$.  
So by the previous paragraph, $G_n = (f_n)_*F_n$.
Since the solutions $M_n'$ to the reduced patching problems are compatible, the sheaves $G_n$ are also compatible; i.e., 
$\omega_n^*(f_{n+1})_*F_{n+1} = (f_n)_*F_n$ for each $n$, where $\omega_n:U_n \to U_{n+1}$ is the natural inclusion.
Hence by Theorem~\ref{coherent conditions}(\ref{good reflexive}), it follows that 
$\wh f_*\ms F$ is coherent and reflexive on $\mf U$.  Since $\wh f^*\wh f_*\ms F = \ms F$, 
the assertion of part~(\ref{reflex pp}) now follows from Proposition~\ref{pp sheaf bij}.

For the reverse direction of part~(\ref{loc free pp}), assume that the given patching problem is locally free and that for $n \gg 0$ the reductions modulo $t^n$ have locally free solutions $N_n$ (which are not assumed to be compatible).  Since $N_n$ is a solution to the reduced patching problem, the locally free sheaf $H_n := \til N_n$ on $U_n'$ pulls back to $F_n$, the sheaf on $W_n$ defined by the reduced patching problem.  By the third paragraph of the proof (which applies since locally free sheaves are reflexive), we have that $H_n = (f_n)_*F_n$.  
Thus $(f_n)_*F_n$ is locally free.  By Theorem~\ref{coherent conditions}(\ref{loc free coh}), $\wh f_*\ms F$ is coherent and locally free on $\mf U$, and so the reverse direction of part~(\ref{loc free pp}) again follows by Proposition~\ref{pp sheaf bij}.

For the forward direction of part~(\ref{loc free pp}), suppose that the given patching problem has a locally free solution.  This solution corresponds to a locally free sheaf $\ms G$ on $\mf U$ such that $\wh f^*\ms G = \ms F$, in the above notation.  Thus $\wh f_*\ms F = \wh f_*\wh f^*\ms G = \ms G$ by Corollary~\ref{reflexive pp cor}.  Hence $\wh f_*\ms F$ is coherent and locally free on $\mf U$, and so $(f_n)_*F_n$ is locally free on $U_n'$ for all $n$, by Theorem~\ref{coherent conditions}(\ref{loc free coh}).  This sheaf is then of the form $\til N_n$ for some finite locally free $R_n$-module $N_n$, and 
$f_n^*\til N_n = f_n^*(f_n)_*F_n = F_n$.
Hence $N_n$ is a locally free solution to the 
mod $t^n$ reduction of the given patching problem for all $n$ (and in particular, for all sufficiently large $n$).
\end{proof}

Note that we do not need to require an assumption of compatibility in part~(\ref{loc free pp}) of the above result, because there is no such assumption in Theorem~\ref{coherent conditions}(\ref{loc free coh}).  

Meanwhile, by using Corollary~\ref{Cech H1 cor}, we obtain the following criterion in the reflexive case, with no compatibility assumption:

\begin{prop}
In the context of the discussion before Proposition~\ref{pp sheaf bij},
a reflexive patching problem $(M_0,M_1,M_2;\alpha_1,\alpha_2)$ has a reflexive solution provided that
\[tM_0 \cap (M_1 + M_2) = t(M_1 + M_2) \subseteq M_0,\]
where for $e=1,2$ we identify $M_e$ with its image in $M_0$ via $\alpha_e\iota_e$.
\end{prop}

\begin{proof}
With notation as in the discussion before Proposition~\ref{pp sheaf bij}, 
let $\ms N$ be the coherent sheaf on $\mf W$ that is associated to the given patching problem.  By Corollary~\ref{Cech H1 cor}, $\wh f_*\ms N$ is coherent and reflexive on $\mf U$.  Since $\wh f^*\wh f_*\ms N = \ms N$, it follows from Proposition~\ref{pp sheaf bij} that there is a reflexive solution $S$ to the patching problem.
\end{proof}

\noindent{\bf Author Information:}

\smallskip
 
\noindent David Harbater\\
Department of Mathematics, University of Pennsylvania, Philadelphia, PA 19104-6395, USA\\
email: harbater@math.upenn.edu

\smallskip

\noindent Julia Hartmann\\
Department of Mathematics, University of Pennsylvania, Philadelphia, PA 19104-6395, USA\\
email: hartmann@math.upenn.edu

\smallskip

\noindent Daniel Krashen\\
Department of Mathematics, University of Pennsylvania, Philadelphia, PA 19104-6395, USA\\
email: dkrashen@math.upenn.edu

\smallskip

\noindent The authors were supported on NSF grants DMS-1805439, DMS-2102987, and DMS-2402367 (DH and JH), and by NSF grants DMS-1902144, DMS-2049180, and DMS-2401018 (DK).

\end{document}